\newtheorem{definition}{Definition}[section]
\newtheorem{theorem}[definition]{Theorem}
\newtheorem{lemma}[definition]{Lemma}
\newtheorem{corollary}[definition]{Corollary}
\newtheorem{example}[definition]{Example}
\newtheorem{conjecture}[definition]{Conjecture}
\newtheorem{problem}[definition]{Problem}
\newtheorem{note}[definition]{Note}
\newtheorem{proposition}[definition]{Proposition}
\begin{document}
\title{\bf 
The alternating central extension
\\
for 
the
positive part of
$U_q(\widehat{\mathfrak{sl}}_2)$ }
\author{
Paul Terwilliger 
}
\date{}

\maketitle
\begin{abstract}
This paper is about the positive part $U^+_q$ of 
the quantum group $U_q(\widehat{\mathfrak{sl}}_2)$.
The algebra $U^+_q$ has a presentation with
two generators $A,B$ that satisfy the cubic $q$-Serre relations.
Recently we introduced a type of element in $U^+_q$, said to be alternating. 
Each alternating element commutes with exactly one of
$A$, $B$, $qBA-q^{-1}AB$, $qAB-q^{-1}BA$; this gives four types of
alternating elements. There are infinitely many alternating elements of
each type, and these mutually commute.
In the present paper we use the alternating elements
to obtain a central extension $\mathcal U^+_q$ of
$U^+_q$. We define $\mathcal U^+_q$ by
generators and relations. These generators, said to be
alternating, are in bijection with the alternating elements of
$U^+_q$. 
We display a surjective algebra homomorphism
$\mathcal U^+_q \to U^+_q$ that sends
each alternating generator of 
$\mathcal U^+_q$ to the corresponding alternating element in $U^+_q$.
We adjust this homomorphism to obtain an algebra isomorphism
$\mathcal U_q^+ \to U^+_q \otimes  
\mathbb F \lbrack z_1, z_2,\ldots\rbrack$
where $\mathbb F$ is the ground field and
$\lbrace z_n\rbrace_{n=1}^\infty$ are mutually commuting indeterminates.
We show that the alternating generators 
form a PBW basis for 
$\mathcal U_q^+$.
We discuss how $\mathcal U^+_q$ is related
to the work of
Baseilhac, Koizumi, Shigechi concerning the $q$-Onsager
algebra and integrable lattice models.
\bigskip

\noindent
{\bf Keywords}. Alternating element, PBW basis, Onsager algebra.
\hfil\break
\noindent {\bf 2010 Mathematics Subject Classification}. 
Primary: 17B37. Secondary: 05E15, 81R50.

 \end{abstract}
\section{Introduction}
The $q$-Onsager algebra $\mathcal O_q$
is often used
to investigate integrable lattice models
\cite{bas2,
bas1,
basXXZ,
basBel,
BK05,
bas4,
basKoi,
basnc}.
 In \cite{BK05} Baseilhac and Koizumi introduced a current algebra
$\mathcal A_q$ for $\mathcal O_q$, in order to solve boundary
integrable systems with hidden symmetries. In
\cite[Definition~3.1]{basnc} Baseilhac and Shigechi gave
a presentation of $\mathcal A_q$ by generators and relations.
The presentation is a bit complicated, and
the precise relationship between
 $\mathcal A_q$ and 
$\mathcal O_q$ is presently unknown. However see
 \cite[Conjectures~1,~2]{basBel} and
 \cite[Conjectures~4.5,~4.6]{z2z2z2}.
Hoping to shed light on 
the above relationship,
in the present paper we consider a limiting case
in which the technical details are less complicated.
Following \cite[Section~1]{alternating} we replace $\mathcal O_q$ by 
the positive part $U^+_q$  of the quantum group 
$U_q(\widehat{\mathfrak{sl}}_2)$. We introduce
an algebra $\mathcal U^+_q$ 
that is related to $U^+_q$ in roughly the same way that
$\mathcal A_q$ is related to $\mathcal O_q$. We describe
in detail how 
$\mathcal U^+_q$ 
 is related to $U^+_q$.
We will summarize our results after a few comments.
\medskip

\noindent We now give some
background information about $U^+_q$.
The algebra $U^+_q$ has a presentation with
two generators $A,B$ that satisfy the cubic $q$-Serre relations;
see Definition 
\ref{def:nposp} below.
In \cite{alternating}
we introduced a type of element in $U^+_q$, said to be alternating. 
As we showed in
\cite[Lemma 5.11]{alternating},
each alternating element commutes with exactly one of
$A$, $B$, $qBA-q^{-1}AB$, $qAB-q^{-1}BA$. This gives four types of
alternating elements,
denoted
\begin{align*}
\lbrace  W_{-k}\rbrace_{k\in \mathbb N}, \quad
\lbrace  W_{k+1}\rbrace_{k\in \mathbb N}, \quad
\lbrace  G_{k+1}\rbrace_{k\in \mathbb N}, \quad
\lbrace {\tilde G}_{k+1}\rbrace_{k\in \mathbb N}.
\end{align*}
By 
\cite[Lemma 5.11]{alternating} the alternating elements of
each type mutually commute.
\medskip

\noindent 
The alternating elements arise naturally in the following way.
Start with the free algebra $\mathbb V$ on two generators $x,y$.
The standard (linear) basis for
$\mathbb V$ consists of the words in $x,y$.
In
\cite{rosso1, rosso} M. Rosso introduced
an algebra structure on $\mathbb V$, called a
$q$-shuffle algebra.
For $u,v\in \lbrace x,y\rbrace$ their
$q$-shuffle product is
$u\star v = uv+q^{\langle u,v\rangle }vu$, where
$\langle u,v\rangle =2$
(resp. $\langle u,v\rangle =-2$)
if $u=v$ (resp.
 $u\not=v$).
Rosso gave an injective algebra homomorphism $\natural$
from $U^+_q$ into the $q$-shuffle algebra
${\mathbb V}$, that sends $A\mapsto x$ and $B\mapsto y$.
By
\cite[Definition~5.2]{alternating}
the map $\natural$ sends
\begin{align*}
&W_0 \mapsto x, \qquad W_{-1} \mapsto xyx, \qquad W_{-2} \mapsto xyxyx, \qquad \ldots
\\
&W_1 \mapsto y, \qquad W_{2} \mapsto yxy, \qquad W_{3} \mapsto yxyxy, \qquad \ldots
\\
&G_{1} \mapsto yx, \qquad G_{2} \mapsto yxyx,  \qquad G_3 \mapsto yxyxyx, \qquad \ldots
\\
&\tilde G_{1} \mapsto xy, \qquad \tilde G_{2} \mapsto 
xyxy,\qquad \tilde G_3 \mapsto xyxyxy, \qquad \ldots
\end{align*}
In \cite{alternating} we used $\natural$
to obtain many relations involving the 
 alternating elements; see
Lemmas \ref{lem:nrel1},
\ref{lem:nrel2} below.
These relations resemble  the defining relations for
$\mathcal A_q$ found in 
\cite[Definition~3.1]{basnc}.
We will say more
about
Lemmas \ref{lem:nrel1},
\ref{lem:nrel2} shortly.
In \cite[Section~10]{alternating} 
we used the alternating elements to obtain some
PBW bases for $U^+_q$.
For instance, in
\cite[Theorem~10.1]{alternating} 
we showed that the elements in order
\begin{align*}
\lbrace  W_{-k}\rbrace_{k \in \mathbb N}, \quad 
\lbrace {\tilde G}_{k+1}\rbrace_{k \in \mathbb N}, \quad
\lbrace  W_{k+1}\rbrace_{k\in \mathbb N}
\end{align*}
 give a PBW basis for $U^+_q$, said to be alternating
\cite[Definition~10.3]{alternating}.
\medskip

\noindent We now summarize the main results of the present paper.
We define
an algebra  $\mathcal U^+_q$  by 
generators
\begin{align}
\lbrace \mathcal W_{-k}\rbrace_{k \in \mathbb N}, \quad 
\lbrace  \mathcal W_{k+1}\rbrace_{k \in \mathbb N}, \quad
\lbrace  \mathcal G_{k+1}\rbrace_{k\in \mathbb N}, \quad
\lbrace \mathcal {\tilde G}_{k+1}\rbrace_{k \in \mathbb N}
\label{eq:AG}
\end{align}
and the relations in Lemmas \ref{lem:nrel1}, \ref{lem:nrel2}. 
The generators 
(\ref{eq:AG}) are called alternating.
By construction there 
 exists a surjective algebra homomorphism
$ \mathcal U^+_q \to U^+_q$ that sends
\begin{align*}
\mathcal W_{-k} \mapsto W_{-k},\quad \qquad
\mathcal W_{k+1} \mapsto W_{k+1},\quad \qquad
\mathcal G_{k} \mapsto G_{k},\quad \qquad
\mathcal {\tilde G}_{k} \mapsto \tilde G_{k}
\end{align*}
for $k \in \mathbb N$.
As we will see, this map is not injective.
Denote the ground field by $\mathbb F$ and
let $\lbrace z_n\rbrace_{n=1}^\infty$
denote mutually commuting indeterminates.
Let
$\mathbb F\lbrack z_1, z_2,\ldots\rbrack$
denote the algebra consisting of the polynomials in
$z_1, z_2,\ldots $ that have all coefficients in 
$\mathbb F$. 
For notational convenience define $z_0=1$.
We display an algebra isomorphism
$\varphi:
\mathcal U^+_q \to U^+_q \otimes 
\mathbb F \lbrack z_1, z_2,\ldots\rbrack$ that sends
\begin{align*}
\mathcal W_{-n} &\mapsto \sum_{k=0}^n W_{k-n} \otimes z_k,
\quad \qquad \qquad 
\mathcal W_{n+1} \mapsto \sum_{k=0}^n W_{n+1-k} \otimes z_k,
\\
\mathcal G_{n} &\mapsto \sum_{k=0}^n G_{n-k} \otimes z_k,
\quad \qquad \qquad
\mathcal {\tilde G}_{n} \mapsto \sum_{k=0}^n \tilde G_{n-k} \otimes z_k
\end{align*}
for $n \in \mathbb N$. In particular $\varphi$ sends
\begin{align*}
\mathcal W_0 \mapsto W_0 \otimes 1,
\qquad \qquad 
\mathcal W_1 \mapsto W_1 \otimes 1.
\end{align*}
We use $\varphi$ to obtain the following results.
Let $\mathcal Z$ denote the center of $\mathcal U^+_q$.
We show that $\mathcal Z$
is generated by $\lbrace Z^\vee_n\rbrace_{n=1}^\infty$,
where
\begin{align*}
 Z^\vee_n= \sum_{k=0}^n \mathcal G_k \mathcal {\tilde G}_{n-k} q^{n-2k}
-  q \sum_{k=0}^{n-1} \mathcal W_{-k} \mathcal W_{n-k} q^{n-1-2k}.
\end{align*}
We show that for $n\geq 1$, $\varphi$ sends $Z^\vee_n \mapsto 1 \otimes z^\vee_n$
where $z^\vee_n = \sum_{k=0}^n z_k z_{n-k} q^{n-2k}$.
We show that 
$\lbrace Z^\vee_n\rbrace_{n=1}^\infty$ are algebraically
independent.
Let $\langle \mathcal W_0, \mathcal W_1\rangle$
denote the subalgebra of 
 $\mathcal U^+_q$  generated by
$\mathcal W_0, \mathcal W_1$.
We show that the algebra $\langle \mathcal W_0, \mathcal W_1\rangle$
is isomorphic to 
$U^+_q$. We show that the multiplication map
\begin{align*}
\langle \mathcal W_0,\mathcal W_1\rangle \otimes \mathcal Z &\to
	       \mathcal U^+_q 
	       \\
w \otimes z &\mapsto      wz            
\end{align*}
is an algebra isomorphism.
We show that the alternating generators in order
\begin{align*}
\lbrace \mathcal W_{-k} \rbrace_{k \in \mathbb N}, \qquad 
\lbrace \mathcal G_{k+1} \rbrace_{k\in \mathbb N}, \qquad  
\lbrace \mathcal {\tilde G}_{k+1} \rbrace_{k\in \mathbb N}, \qquad  
\lbrace \mathcal W_{k+1} \rbrace_{k\in \mathbb N}
\end{align*}
give a PBW basis for $\mathcal U^+_q$.
Motivated by the above results, 
near the end of the paper we give some 
conjectures concerning $\mathcal A_q$ and $\mathcal O_q$.
\medskip

\noindent The paper is organized as follows.
In Section 2 we give some background information about $U^+_q$.
In Section 3 we introduce the algebra $\mathcal U^+_q$ and
describe its basic properties.
In Section 4 we obtain some results about
the polynomial algebra $\mathbb F\lbrack z_1, z_2,\ldots\rbrack$
that will be used in later sections.
In Section 5 we show that the map $\varphi$ is an algebra isomorphism.
In Section 6 we describe the center of $\mathcal U^+_q$ and also
the subalgebra of $\mathcal U^+_q$ generated by $\mathcal W_0, \mathcal W_1$.
In Section 7 we describe several ideals of $\mathcal U^+_q$, and
in Section 8 we describe some symmetries of $\mathcal U^+_q$.
In Section 9 we describe a grading of $\mathcal U^+_q$, that gets
used in Section 10 to establish a PBW basis for $\mathcal U^+_q$.
In Section 11 we give some conjectures concerning
$\mathcal A_q$ and $\mathcal O_q$.
Section 12 contains an acknowledgment, and Section 13 is an appendix
containing some technical details.

\section{The algebra $U^+_q$}

\noindent
We now begin our formal argument.
Recall the natural numbers $\mathbb N=\lbrace 0,1,2,\ldots\rbrace$
and integers $\mathbb Z = \lbrace 0,\pm 1, \pm 2,\ldots \rbrace$.
Let $\mathbb F$ denote a field.
We will be discussing vector spaces, tensor products,
and algebras.
Each vector space and tensor product discussed is over $\mathbb F$.
Each algebra discussed is associative, over $\mathbb F$,
and has a multiplicative identity.
A subalgebra has the same multiplicative identity as the parent algebra.
\medskip

\noindent Fix a nonzero $q \in \mathbb F$
that is not a root of unity.
Recall the notation
\begin{eqnarray*}
\lbrack n\rbrack_q = \frac{q^n-q^{-n}}{q-q^{-1}}
\qquad \qquad n \in \mathbb Z.
\end{eqnarray*}
\noindent For elements $X, Y$ in any algebra, define their
commutator and $q$-commutator by 
\begin{align*}
\lbrack X, Y \rbrack = XY-YX, \qquad \qquad
\lbrack X, Y \rbrack_q = q XY- q^{-1}YX.
\end{align*}
\noindent Note that 
\begin{align*}
\lbrack X, \lbrack X, \lbrack X, Y\rbrack_q \rbrack_{q^{-1}} \rbrack
= 
X^3Y-\lbrack 3\rbrack_q X^2YX+ 
\lbrack 3\rbrack_q XYX^2 -YX^3.
\end{align*}

\begin{definition}
\label{def:nposp}
\rm 
(See \cite[Corollary~3.2.6]{lusztig}.)
Define the algebra $U^+_q$ 
by generators $A,B$ and relations
\begin{eqnarray}
&&
\lbrack A, \lbrack A, \lbrack A, B\rbrack_q \rbrack_{q^{-1}} \rbrack=0,
\qquad \qquad 
\lbrack B, \lbrack B, \lbrack B, A\rbrack_q \rbrack_{q^{-1}}
\rbrack=0.
\label{eq:nqSerre1}
\end{eqnarray}
\noindent We call $U^+_q$ the {\it positive part of 
$U_q(\widehat{\mathfrak{sl}}_2)$}.
The relations (\ref{eq:nqSerre1})
are called the {\it $q$-Serre relations}.
\end{definition}

\noindent
We will be discussing automorphisms and antiautomorphisms.
For an algebra $\mathcal A$,
an {\it automorphism} of $\mathcal A$ is 
an algebra isomorphism $\mathcal A \to \mathcal A$.
The {\it opposite algebra} $\mathcal A^{\rm opp}$ consists
of the vector space $\mathcal A$ and multiplication map
$\mathcal A \times \mathcal A \to \mathcal A$, $(a, b) \mapsto ba$.
An {\it antiautomorphism} of $\mathcal A$ 
is an
algebra isomorphism $\mathcal A \to \mathcal A^{\rm opp}$.
\begin{lemma}
\label{lem:nAAut}
There exists an  automorphism $\sigma$ of
$U^+_q$ that swaps $A, B$. 
There exists an  antiautomorphism
$S$ of $U^+_q$ that fixes each of $A$, $B$.
\end{lemma}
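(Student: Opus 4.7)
The plan is to invoke the universal property of $U^+_q$ as a quotient of the free algebra $\mathbb{F}\langle A,B\rangle$ by the two-sided ideal generated by the $q$-Serre relations (\ref{eq:nqSerre1}). For each of $\sigma$ and $S$, I will first define a candidate map on the free algebra, then verify that it sends the defining relations into the defining ideal, so that it descends to a well-defined map on $U^+_q$; finally I will exhibit an inverse to conclude bijectivity.

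For the automorphism $\sigma$, I consider the unique algebra homomorphism $\widetilde\sigma:\mathbb{F}\langle A,B\rangle \to U^+_q$ with $A\mapsto B$ and $B\mapsto A$. Observe that the two $q$-Serre relations in (\ref{eq:nqSerre1}) form a pair which is literally exchanged by swapping $A \leftrightarrow B$, so the images of these relations under $\widetilde\sigma$ both vanish in $U^+_q$. Hence $\widetilde\sigma$ factors through an algebra homomorphism $\sigma:U^+_q\to U^+_q$ with $\sigma(A)=B$, $\sigma(B)=A$. Since $\sigma^2$ fixes the generators $A,B$, we have $\sigma^2=\mathrm{id}$, so $\sigma$ is an automorphism.

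For the antiautomorphism $S$, I use the free algebra $\mathbb{F}\langle A,B\rangle$ again and define an anti-homomorphism by $A\mapsto A$, $B\mapsto B$; equivalently, this is the unique algebra homomorphism $\mathbb{F}\langle A,B\rangle \to U^{+,\mathrm{opp}}_q$ sending $A\mapsto A$, $B\mapsto B$. The key step is to check that the $q$-Serre expressions lie in the kernel. Expanding with the identity recorded just before Definition \ref{def:nposp}, the first relation reads
\begin{equation*}
A^3B - \lbrack 3\rbrack_q A^2BA + \lbrack 3\rbrack_q ABA^2 - BA^3 = 0.
\end{equation*}
Reversing the order of every product (which is what applying $S$ amounts to) gives
\begin{equation*}
BA^3 - \lbrack 3\rbrack_q ABA^2 + \lbrack 3\rbrack_q A^2BA - A^3B,
\end{equation*}
which is $-1$ times the original left-hand side and therefore is zero in $U^+_q$. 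The second relation is handled identically with the roles of $A,B$ reversed. Thus the anti-homomorphism descends to a map $S:U^+_q\to U^{+,\mathrm{opp}}_q$, i.e.\ an algebra antihomomorphism of $U^+_q$ fixing $A$ and $B$. Since $S^2$ is an algebra homomorphism of $U^+_q$ fixing the generators, $S^2=\mathrm{id}$, so $S$ is an antiautomorphism.

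The main (mild) obstacle is the well-definedness check for $S$: one must be careful that reversing products in a cubic expression really returns something in the defining ideal, which the sign computation above confirms. Everything else is formal bookkeeping with the universal property.
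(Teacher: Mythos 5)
Your proof is correct and is exactly the routine verification the paper has in mind (the paper omits the proof of Lemma \ref{lem:nAAut} entirely, treating it as immediate from the symmetry of the $q$-Serre relations (\ref{eq:nqSerre1}) under swapping $A,B$ and under reversing products). Both the well-definedness checks and the $\sigma^2=S^2=\mathrm{id}$ argument for bijectivity are sound.
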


\noindent We mention a grading for the algebra $U^+_q$.
The $q$-Serre relations are homogeneous in both
$A$ and $B$. Therefore
the algebra $U^+_q$ has an $(\mathbb N \times \mathbb N)$-grading 
for which $A$ and $B$ are homogeneous,
with degrees $(1,0)$ and $(0,1)$ respectively. 
The {\it trivial} homogeneous 
component of $U^+_q$ has degree $(0,0)$
and is equal to $\mathbb F 1$.
\medskip

\noindent The alternating elements of
$U^+_q$ were introduced in \cite{alternating}. There are four types, denoted 
\begin{align}
\label{eq:nWWGGn}
\lbrace  W_{-k}\rbrace_{k \in \mathbb N}, \quad 
\lbrace  W_{k+1}\rbrace_{k \in \mathbb N}, \quad
\lbrace  G_{k+1}\rbrace_{k\in \mathbb N}, \quad
\lbrace {\tilde G}_{k+1}\rbrace_{k \in \mathbb N}.
\end{align}
As we will review in Lemma
\ref{lem:nrecgen},
the above elements are obtained
from $A,B$ using a recursive procedure with initial conditions
$W_0 = A$ and $W_1 = B$. 
\medskip

\noindent In 
\cite{alternating}
we displayed many relations satisfied
by the alternating elements of $U^+_q$.
In the next three lemmas we list some of these relations.

\begin{lemma} 
\label{lem:nrel1}
{\rm (See \cite[Proposition~5.7]{alternating}.)}
For $k \in \mathbb N$
the following holds in $U^+_q$:
\begin{align}
&
 \lbrack  W_0,  W_{k+1}\rbrack= 
\lbrack  W_{-k},  W_{1}\rbrack=
(1-q^{-2})({\tilde G}_{k+1} -  G_{k+1}),
\label{eq:n3p1vv}
\\
&
\lbrack  W_0,  G_{k+1}\rbrack_q= 
\lbrack {{\tilde G}}_{k+1},  W_{0}\rbrack_q= 
 (q-q^{-1})W_{-k-1},
\label{eq:n3p2vv}
\\
&
\lbrack G_{k+1},  W_{1}\rbrack_q= 
\lbrack  W_{1}, { {\tilde G}}_{k+1}\rbrack_q= 
(q-q^{-1}) W_{k+2}.
\label{eq:n3p3vv}
\end{align}
\end{lemma}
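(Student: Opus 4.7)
The plan is to verify each identity by pushing it through the injective algebra homomorphism $\natural: U^+_q \to \mathbb V$ of Rosso, and then checking the resulting identity of $q$-shuffle products of alternating words in $x, y$. The images of all four families of alternating elements are the explicit alternating words displayed after Definition \ref{def:nposp}, so each identity of the lemma becomes a purely combinatorial identity in $\mathbb V$, which we can establish by direct computation.

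First I would reduce the six equalities of the statement to three by using the automorphism $\sigma$ of Lemma \ref{lem:nAAut}. The swap $x \leftrightarrow y$ on $\mathbb V$ interchanges $\natural(W_{-k})$ with $\natural(W_{k+1})$ and $\natural(G_{k+1})$ with $\natural(\tilde G_{k+1})$; since $\sigma$ swaps $W_0 \leftrightarrow W_1$, it follows by the injectivity of $\natural$ that $\sigma$ swaps $W_{-k} \leftrightarrow W_{k+1}$ and $G_{k+1} \leftrightarrow \tilde G_{k+1}$. Applying $\sigma$ to the first equality of each line of the lemma yields the second. It therefore remains to prove the three identities
\[
\lbrack W_0, W_{k+1}\rbrack = (1-q^{-2})(\tilde G_{k+1} - G_{k+1}), \qquad \lbrack W_0, G_{k+1}\rbrack_q = (q-q^{-1})W_{-k-1}, \qquad \lbrack G_{k+1}, W_1\rbrack_q = (q-q^{-1})W_{k+2}.
\]

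The main computation uses the $q$-shuffle product of a single letter $a \in \lbrace x, y\rbrace$ with a word $w = w_1 w_2 \cdots w_n$, expressed as a sum over insertion positions with $q$-weights,
\[
a \star w = \sum_{i=0}^{n} \Bigl(\prod_{j=1}^{i} q^{\langle a, w_j\rangle}\Bigr)\, w_1 \cdots w_i\, a\, w_{i+1} \cdots w_n,
\]
and a symmetric formula for $w \star a$. Applying these to the alternating words at hand, the insertion positions group into pairs $\lbrace 2m-1, 2m\rbrace$ that produce the same non-alternating word (one with a single internal $xx$ or $yy$). A short exponent calculation, using the alternation of $w$, shows that in the ordinary commutator (identity 1) or the $q$-commutator (identities 2 and 3) the contributions from each such pair cancel exactly: for example, the pair gives coefficients $q^{-2}+1$ in $a \star w$ and $q^{2}+1$ in $w \star a$, and these are annihilated by the scalings $q$ and $-q^{-1}$ that define $\lbrack\cdot,\cdot\rbrack_q$. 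What remains are the contributions from the extreme insertion positions: for identity 1 these are $(xy)^{k+1}$ and $(yx)^{k+1}$ with total coefficient $(1-q^{-2})\bigl((xy)^{k+1} - (yx)^{k+1}\bigr) = \natural\bigl((1-q^{-2})(\tilde G_{k+1} - G_{k+1})\bigr)$; for identity 2 the sole survivor is $(xy)^{k+1}x = \natural(W_{-k-1})$ with coefficient $q-q^{-1}$; identity 3 is analogous with the roles of $x$ and $y$ interchanged.

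The main obstacle is the $q$-weight bookkeeping across the many insertion positions and verifying the pairwise cancellation. Because each $w$ here is alternating, the exponents follow a strict arithmetic pattern and the needed identities collapse to the short exponent computations above. An alternative approach would be induction on $k$, using the recursive construction of the alternating elements in Lemma \ref{lem:nrecgen} together with repeated application of the $q$-Serre relations, but the direct shuffle computation is more transparent.
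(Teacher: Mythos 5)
Your proposal is correct and follows essentially the same route as the source: the paper itself gives no proof of this lemma, importing it from \cite[Proposition~5.7]{alternating}, and that reference establishes exactly these relations by computing $q$-shuffle products of the alternating words under Rosso's embedding $\natural$, just as you do. One small bookkeeping remark: applying $\sigma$ to the first equality of (\ref{eq:n3p2vv}) yields the second equality of (\ref{eq:n3p3vv}) rather than the second equality of (\ref{eq:n3p2vv}), but the three identities you single out do generate all six under $\sigma$, so your reduction stands.
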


\begin{lemma}
\label{lem:nrel2}
{\rm (See \cite[Proposition~5.9]{alternating}.)}
For $k, \ell \in \mathbb N$ the
following relations hold in $U^+_q$:
\begin{align}
&
\lbrack  W_{-k},  W_{-\ell}\rbrack=0,  \qquad 
\lbrack  W_{k+1},  W_{\ell+1}\rbrack= 0,
\label{eq:n3p4vv}
\\
&
\lbrack  W_{-k},  W_{\ell+1}\rbrack+
\lbrack W_{k+1},  W_{-\ell}\rbrack= 0,
\label{eq:n3p5vv}
\\
&
\lbrack  W_{-k},  G_{\ell+1}\rbrack+
\lbrack G_{k+1},  W_{-\ell}\rbrack= 0,
\label{eq:n3p6vv}
\\
&
\lbrack W_{-k},  {\tilde G}_{\ell+1}\rbrack+
\lbrack  {\tilde G}_{k+1},  W_{-\ell}\rbrack= 0,
\label{eq:n3p7vv}
\\
&
\lbrack  W_{k+1},  G_{\ell+1}\rbrack+
\lbrack   G_{k+1}, W_{\ell+1}\rbrack= 0,
\label{eq:n3p8vv}
\\
&
\lbrack  W_{k+1},  {\tilde G}_{\ell+1}\rbrack+
\lbrack  {\tilde G}_{k+1},  W_{\ell+1}\rbrack= 0,
\label{eq:n3p9vv}
\\
&
\lbrack  G_{k+1},  G_{\ell+1}\rbrack=0,
\qquad 
\lbrack {\tilde G}_{k+1},  {\tilde G}_{\ell+1}\rbrack= 0,
\label{eq:n3p10vv}
\\
&
\lbrack {\tilde G}_{k+1},  G_{\ell+1}\rbrack+
\lbrack  G_{k+1},  {\tilde G}_{\ell+1}\rbrack= 0.
\label{eq:n3p11v}
\end{align}
\end{lemma}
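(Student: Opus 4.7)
The plan is to exploit the injective algebra homomorphism $\natural : U^+_q \to \mathbb V$ from $U^+_q$ into Rosso's $q$-shuffle algebra, together with the explicit images
$\natural(W_{-k}) = x(yx)^k$, $\natural(W_{k+1}) = y(xy)^k$, $\natural(G_{k+1}) = (yx)^{k+1}$, and $\natural(\tilde G_{k+1}) = (xy)^{k+1}$. Since $\natural$ is injective, it suffices to verify each of the eleven identities after applying $\natural$. In $\mathbb V$ both sides unfold into weighted sums over shuffles of two alternating words, so the task becomes finding weight-preserving (or weight-preserving-up-to-sign) bijections between the shuffle sets indexing the two sides.

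For the four same-family identities $[W_{-k}, W_{-\ell}]=0$, $[W_{k+1}, W_{\ell+1}]=0$, $[G_{k+1}, G_{\ell+1}]=0$, $[\tilde G_{k+1}, \tilde G_{\ell+1}]=0$, the two factors are alternating words of the same type. I would set up the natural involution on shuffles that interchanges which factor each letter comes from; because $\langle x,x\rangle = \langle y,y\rangle = 2$ and the two words have matching positions of $x$'s and $y$'s, this involution preserves the $q$-weight, forcing $u \star v = v \star u$. For the seven cross-identities of the form $[X_k, Y_\ell] + [X_\ell, Y_k] = 0$, the involution instead interchanges the roles of $k$ and $\ell$, and one checks that the $q$-weights contributed by the $\langle x,y\rangle = \langle y,x\rangle = -2$ crossings match between the shuffles coming from $X_k Y_\ell - Y_\ell X_k$ and those coming from $-(X_\ell Y_k - Y_k X_\ell)$.

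As a parallel route useful as a consistency check and as a base for induction on $k+\ell$, one can deduce many cases directly from Lemma \ref{lem:nrel1}: relations (\ref{eq:n3p2vv}) and (\ref{eq:n3p3vv}) let us rewrite $W_{-k-1}$ and $W_{k+2}$ as $q$-commutators built from $W_0, W_1, G_{k+1}, \tilde G_{k+1}$, so applying the Jacobi identity to the inductive hypothesis produces the needed identity at the next level. The main obstacle is the combinatorial bookkeeping: in the shuffle approach one must carefully track the $q$-powers contributed by each pair of crossing letters when constructing the involutions, and in the inductive approach the interplay between ordinary and $q$-commutators generates many compensating terms. For this reason I would treat all eleven identities simultaneously in a single induction on $k+\ell$, using the shuffle-algebra identities for the base cases (small $k,\ell$) where direct expansion is still manageable.
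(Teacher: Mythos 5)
First, note that the paper does not actually prove Lemma \ref{lem:nrel2}: it quotes it from \cite[Proposition~5.9]{alternating}, where the verification is carried out inside the $q$-shuffle algebra. So your overall strategy --- push everything through the injection $\natural$ and check identities among alternating words --- is the same one underlying the cited source. The gap is in the combinatorial mechanism you propose. The ``color-swap'' involution (interchange which factor each letter of a shuffle comes from) is \emph{not} weight-preserving, already in the smallest nontrivial instance of (\ref{eq:n3p4vv}). Take $k=0$, $\ell=1$, so you must show $x\star xyx = xyx\star x$. Both products equal $(1+q^2)(xxyx+xyxx)$, but the two shuffles of $(x,xyx)$ producing $xxyx$ carry weights $1$ and $q^2$ (single letter placed first, resp.\ second), while your involution sends them to the shuffles of $(xyx,x)$ with the single letter in the \emph{same} position, whose weights are $q^2$ and $1$ respectively. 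The weights are permuted, not preserved; the products agree only because the multisets of weights coincide, for reasons the involution does not explain. The difficulty is compounded for the cross-identities (\ref{eq:n3p6vv})--(\ref{eq:n3p9vv}): there each commutator is separately nonzero, so the cancellation genuinely occurs \emph{between} the two commutators and cannot be produced by any term-by-term pairing inside one of them. ``Careful bookkeeping'' will not rescue the naive involution; what is actually needed is a closed-form evaluation of $u\star v$ for pairs of alternating words (in \cite{alternating} this is built up by first computing how the single letters $x,y$ shuffle into an arbitrary alternating word and then inducting on length), or some comparably global argument.

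Your fallback induction via Lemma \ref{lem:nrel1} and the Jacobi identity is also not viable as described. Relations (\ref{eq:n3p2vv}), (\ref{eq:n3p3vv}) only let you trade $W_{-k-1}$ for a $q$-commutator of $W_0$ with $G_{k+1}$ or $\tilde G_{k+1}$, so the inductive step for, say, $[W_{-k},W_{-\ell}]=0$ immediately calls on relations such as $[W_{-k},G_{\ell}]+[G_{k},W_{-\ell}]=0$ at comparable indices --- members of the very family being proved --- and the interaction of the Jacobi identity with $q$-commutators generates further terms that must be controlled by yet more unproved relations. In short, the shuffle-algebra route is the right one, but the proposal is missing its core: you would need to carry out the explicit shuffle computations of \cite{alternating}, or simply cite that result, as the paper does.
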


\begin{lemma}
\label{lem:rel3}
{\rm (See \cite[Proposition~5.10]{alternating}.)}
For $k,\ell \in \mathbb N$ the following
relations hold in $U^+_q$:
\begin{align}
&\lbrack W_{-k}, G_{\ell}\rbrack_q = 
\lbrack W_{-\ell}, G_{k}\rbrack_q,
\qquad \quad
\lbrack G_k, W_{\ell+1}\rbrack_q = 
\lbrack G_\ell, W_{k+1}\rbrack_q,
\label{eq:ngg1}
\\
&
\lbrack \tilde G_k, W_{-\ell}\rbrack_q = 
\lbrack \tilde G_\ell, W_{-k}\rbrack_q,
\qquad \quad 
\lbrack W_{\ell+1}, \tilde G_{k}\rbrack_q = 
\lbrack W_{k+1}, \tilde G_{\ell}\rbrack_q,
\label{eq:ngg2}
\\
&\lbrack G_{k}, \tilde G_{\ell+1}\rbrack -
\lbrack G_{\ell}, \tilde G_{k+1}\rbrack =
q\lbrack W_{-\ell}, W_{k+1}\rbrack_q-
q\lbrack W_{-k}, W_{\ell+1}\rbrack_q,
\label{eq:ngg3}
\\
&\lbrack \tilde G_{k},  G_{\ell+1}\rbrack -
\lbrack \tilde G_{\ell},  G_{k+1}\rbrack =
q \lbrack W_{\ell+1}, W_{-k}\rbrack_q-
q\lbrack W_{k+1}, W_{-\ell}\rbrack_q,
\label{eq:ngg4}
\\
&\lbrack G_{k+1}, \tilde G_{\ell+1}\rbrack_q -
\lbrack G_{\ell+1}, \tilde G_{k+1}\rbrack_q =
q\lbrack W_{-\ell}, W_{k+2}\rbrack-
q\lbrack W_{-k}, W_{\ell+2}\rbrack,
\label{eq:ngg5}
\\
&\lbrack \tilde G_{k+1},  G_{\ell+1}\rbrack_q -
\lbrack \tilde G_{\ell+1},  G_{k+1}\rbrack_q =
q \lbrack W_{\ell+1}, W_{-k-1}\rbrack-
q\lbrack W_{k+1}, W_{-\ell-1}\rbrack.
\label{eq:ngg6}
\end{align}
\end{lemma}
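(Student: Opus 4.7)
The plan is to work in the $q$-shuffle algebra $\mathbb V$ via Rosso's injective homomorphism $\natural : U^+_q \to \mathbb V$. Since $\natural$ is injective and we have explicit closed-form images
\begin{align*}
\natural(W_{-k}) = x(yx)^k, \quad \natural(W_{k+1}) = y(xy)^k, \quad \natural(G_{k+1}) = (yx)^{k+1}, \quad \natural(\tilde G_{k+1}) = (xy)^{k+1},
\end{align*}
each of the six identities reduces to an identity among $q$-shuffle products of alternating words in $\mathbb V$. This moves the problem from the abstract algebra $U^+_q$ into a concrete combinatorial setting where both sides can be computed and compared term by term.

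Before undertaking any calculation I would use the automorphism $\sigma$ of $U^+_q$ from Lemma \ref{lem:nAAut} to cut the workload in half. From the explicit action of $\sigma$ on the alternating elements (which exchanges the $x\leftrightarrow y$ roles in the $q$-shuffle picture, swapping $W_{-k}\leftrightarrow W_{k+1}$ and $G_{k+1}\leftrightarrow \tilde G_{k+1}$), one sees that \eqref{eq:ngg2} follows from \eqref{eq:ngg1} by applying $\sigma$, and likewise \eqref{eq:ngg4} from \eqref{eq:ngg3} and \eqref{eq:ngg6} from \eqref{eq:ngg5}. Thus it is enough to establish the three identities \eqref{eq:ngg1} (first equality), \eqref{eq:ngg3}, and \eqref{eq:ngg5}.

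To handle these three, I would build a small library of $q$-shuffle product formulas for products like $x(yx)^k \star (yx)^\ell$, $(yx)^{k+1}\star(xy)^{\ell+1}$, and $x(yx)^k \star y(xy)^\ell$. Because each factor is already an alternating word in $x,y$, only a restricted family of shuffles produces nonzero contributions when the weights $q^{\langle u,v\rangle}$ are tracked, and the contributions organize themselves into sums indexed by how many initial letters of one word are interleaved before the other. Once these closed expressions are in hand, each target identity collapses to a visible cancellation between two such sums after multiplying the left-hand side by the $q$- or ordinary commutator brackets prescribed in the statement.

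The main obstacle is the bookkeeping in \eqref{eq:ngg3} and \eqref{eq:ngg5}, where a commutator (or $q$-commutator) of two $G$-type elements on the left must match a difference of $q$-commutators (or commutators) of two $W$-type elements on the right; these are of different word lengths, so the correct parity of $q$-powers has to line up. An alternative route avoiding heavy shuffle combinatorics is to induct on $k+\ell$ using the recursive generation of the alternating elements (Lemma \ref{lem:nrecgen}) and the bracket relations of Lemmas \ref{lem:nrel1}, \ref{lem:nrel2} as rewriting rules, reducing every relation in Lemma \ref{lem:rel3} to a lower-index instance plus manipulations in the two-variable subalgebra $\langle W_0,W_1\rangle$; however, the alternation between $[\cdot,\cdot]$ and $[\cdot,\cdot]_q$ across the identities makes this route notationally heavier than the direct $q$-shuffle computation.
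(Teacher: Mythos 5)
Your overall strategy---push everything through Rosso's injective map $\natural$ and verify the identities as $q$-shuffle identities among the alternating words---is exactly the route taken by the source this paper relies on: the paper gives no proof of Lemma \ref{lem:rel3} itself, citing \cite[Proposition~5.10]{alternating}, and that reference establishes these relations precisely by computing $q$-shuffle products of the words $x(yx)^k$, $y(xy)^k$, $(yx)^k$, $(xy)^k$. (The paper also records, in Note \ref{note:123}, a second route: by \cite[Propositions~3.1,~3.2]{baspp} the relations of Lemma \ref{lem:rel3} are formal consequences of the relations in Lemmas \ref{lem:nrel1}, \ref{lem:nrel2}. This is the ``alternative route'' you mention and set aside as notationally heavier, which is a fair assessment.)

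Two points need attention. First, your symmetry reduction is slightly off: $\sigma$ alone carries \eqref{eq:ngg1} to \eqref{eq:ngg2}, \eqref{eq:ngg3} to \eqref{eq:ngg4}, and \eqref{eq:ngg5} to \eqref{eq:ngg6}, but it does \emph{not} carry the first equality of \eqref{eq:ngg1} to the second. For that you need the antiautomorphism $S$ of Lemma \ref{lem:nAAut} as well: the composite $\sigma S$ is an antiautomorphism fixing each $G_k$ and sending $W_{-k}\mapsto W_{k+1}$ (by Lemma \ref{lem:nsigSact}), and since it reverses products it sends $\lbrack W_{-k},G_\ell\rbrack_q$ to $\lbrack G_\ell,W_{k+1}\rbrack_q$, turning the first equality of \eqref{eq:ngg1} into the second. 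With both $\sigma$ and $S$ in hand, your reduction to three identities is legitimate; with $\sigma$ alone it is not. Second, and more substantively, the proposal stops where the real work begins: the ``library'' of closed shuffle-product formulas for $x(yx)^k\star(yx)^\ell$, $(yx)^k\star(xy)^\ell$, $x(yx)^k\star y(xy)^\ell$, and so on is asserted but never produced, and the term-by-term cancellations that yield \eqref{eq:ngg1}, \eqref{eq:ngg3}, \eqref{eq:ngg5} are the entire content of the lemma. As written, this is a correct plan whose computational core is still outstanding rather than a complete argument.
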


\begin{note}
\label{note:123}
\rm By \cite[Propositions~3.1,~3.2]{baspp}
the relations in Lemma \ref{lem:rel3}
are implied by the relations in
Lemmas
\ref{lem:nrel1},
\ref{lem:nrel2}. For this reason
we will give
Lemma \ref{lem:rel3}
less emphasis than
Lemmas
\ref{lem:nrel1},
\ref{lem:nrel2}.  
\end{note}

\begin{note}\rm The relations in
Lemmas
\ref{lem:nrel1},
\ref{lem:nrel2} resemble the defining relations
for $\mathcal A_q$ found in
\cite[Definition~3.1]{basnc}.
\end{note}

\noindent Consider
the four sequences in 
(\ref{eq:nWWGGn}). By
(\ref{eq:n3p4vv}),
(\ref{eq:n3p10vv}) the elements of each sequence mutually commute.
According to \cite[Lemma~5.11]{alternating},
\begin{enumerate}
\item[\rm (i)]
an alternating element commutes with $A$ if and only if
it is among
$\lbrace W_{-k}\rbrace_{k\in \mathbb N}$;
\item[\rm (ii)]
an alternating element commutes with $B$ if and only if
it is among
$\lbrace W_{k+1}\rbrace_{k\in \mathbb N}$;
\item[\rm (iii)]
an alternating element commutes with
$\lbrack B, A\rbrack_q$
if and only if it is among
$\lbrace G_{k+1}\rbrace_{k\in \mathbb N}$;
\item[\rm (iv)]
an alternating element commutes with 
$\lbrack A, B\rbrack_q$
if and only if it
is among
$\lbrace \tilde G_{k+1}\rbrace_{k\in \mathbb N}$.
\end{enumerate}

\noindent For notational convenience define $G_0=1$
and $\tilde G_0=1$.

\begin{lemma}
\label{prop:nGGWW}
{\rm (See \cite[Proposition~8.1]{alternating}.)}
For $n\geq 1$
the following hold in $U^+_q$:
\begin{align}
&
\sum_{k=0}^n  G_{k}  \tilde G_{n-k} q^{n-2k}
= q
\sum_{k=0}^{n-1} W_{-k} W_{n-k} q^{n-1-2k},
\label{eq:nGGWW1}
\\
&
\sum_{k=0}^n G_{k}  \tilde G_{n-k} q^{2k-n}
= q
\sum_{k=0}^{n-1} W_{n-k}  W_{-k} q^{n-1-2k},
\label{eq:nGGWW4}
\\
&
\sum_{k=0}^n  \tilde G_{k}   G_{n-k} q^{n-2k}
= q
\sum_{k=0}^{n-1} W_{n-k}  W_{-k} q^{2k+1-n},
\label{eq:nGGWW2}
\\
&
\sum_{k=0}^n \tilde G_{k}  G_{n-k} q^{2k-n}
= q
\sum_{k=0}^{n-1} W_{-k}  W_{n-k} q^{2k+1-n}.
\label{eq:nGGWW3}
\end{align}
\end{lemma}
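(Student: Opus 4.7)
My strategy is to verify all four identities inside the $q$-shuffle algebra $\mathbb V$ via Rosso's injective homomorphism $\natural : U^+_q \to \mathbb V$. Under $\natural$ the alternating elements have the explicit images $G_k \mapsto (yx)^k$, $\tilde G_k \mapsto (xy)^k$, $W_{-k}\mapsto x(yx)^k$, and $W_{k+1} \mapsto y(xy)^k$, with $G_0=\tilde G_0=1$. Each side of (\ref{eq:nGGWW1})--(\ref{eq:nGGWW3}) is homogeneous of bidegree $(n,n)$ and thus lies in the finite-dimensional span of length-$2n$ words with $n$ copies each of $x$ and $y$; by injectivity of $\natural$, it suffices to verify the identities after applying $\natural$.

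For (\ref{eq:nGGWW1}) I would expand the $q$-shuffle products $(yx)^k \star (xy)^{n-k}$ and $x(yx)^{k} \star y(xy)^{n-k-1}$ using the standard recursion for $\star$, then compare coefficients word by word after forming the weighted sums with prefactors $q^{n-2k}$. Each shuffle contributes a monomial in $q^{\pm 2}$ whose exponent is determined by the inversion pattern of the interleaving, so the identity reduces to a combinatorial statement about $q$-weighted lattice paths, which I would verify by a sign-reversing bijection on shuffles or by an auxiliary induction on $n$; the base case $n=1$ is $qAB = q\tilde G_1 + q^{-1}G_1$, immediate from $\natural(AB) = x \star y = xy + q^{-2}yx = \tilde G_1 + q^{-2} G_1$.

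The remaining three identities then follow from (\ref{eq:nGGWW1}) by the symmetries of Lemma \ref{lem:nAAut}. In the shuffle realization, $\sigma$ corresponds to the swap $x \leftrightarrow y$, hence interchanges $G_k \leftrightarrow \tilde G_k$ and $W_{-k} \leftrightarrow W_{k+1}$, while $S$ corresponds to word reversal, which fixes every $W_{\pm k}$ (since $x(yx)^k$ and $y(xy)^k$ are palindromes) and swaps $G_k \leftrightarrow \tilde G_k$. A direct reindexing check shows that $\sigma$ carries (\ref{eq:nGGWW1}) to (\ref{eq:nGGWW2}), that $S$ carries it to (\ref{eq:nGGWW4}), and that $\sigma \circ S$ carries it to (\ref{eq:nGGWW3}).

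The main obstacle is the bookkeeping of $q$-powers in the shuffle expansion: both sides of (\ref{eq:nGGWW1}) receive contributions from many different shuffles, and one must show that for each fixed target word of length $2n$, the $q$-weighted contributions agree after summing over $k$. A viable alternative that avoids the shuffle algebra is induction on $n$, using Lemma \ref{lem:nrel1} to rewrite $W_{-(k+1)}$ and $W_{k+2}$ as $q$-commutators of $W_0$ or $W_1$ with $G_{k+1}$ or $\tilde G_{k+1}$, and then using the commutations in Lemma \ref{lem:nrel2} to push $W_0, W_1$ past the remaining alternating elements so as to telescope the claim down to the previous value of $n$.
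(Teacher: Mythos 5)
In this paper Lemma \ref{prop:nGGWW} is not proved at all: it is imported verbatim from \cite[Proposition~8.1]{alternating}, whose proof is a computation in the $q$-shuffle algebra. So your primary strategy (push everything through $\natural$, verify one identity, and transport it by the symmetries) is the same route as the source. Your symmetry reduction is correct and correctly executed: word reversal is an antiautomorphism of the $q$-shuffle algebra fixing $x,y$, hence realizes $S$; the swap $x\leftrightarrow y$ realizes $\sigma$; the palindrome observation gives the action on the $W$'s; and the reindexings carrying (\ref{eq:nGGWW1}) to (\ref{eq:nGGWW2}), (\ref{eq:nGGWW4}), (\ref{eq:nGGWW3}) check out against Lemma \ref{lem:nsigSact}.

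There are, however, two problems with what remains. First, the proof of (\ref{eq:nGGWW1}) itself is only a placeholder: ``compare coefficients word by word'' and ``a sign-reversing bijection or an auxiliary induction'' is a description of the difficulty, not an argument; the entire content of the proposition sits in that step, and you have not identified the mechanism that makes the weighted sums over shuffles collapse. Second, and more seriously, your proposed fallback --- an induction using only Lemmas \ref{lem:nrel1} and \ref{lem:nrel2} to telescope down in $n$ --- provably cannot work. The algebra $\mathcal U^+_q$ of Definition \ref{def:calU} is defined by exactly the relations of Lemmas \ref{lem:nrel1} and \ref{lem:nrel2}, and the element $Z^\vee_n$ of Definition \ref{def:Zvee} is precisely the difference of the two sides of (\ref{eq:nGGWW1}) written in the generators of $\mathcal U^+_q$. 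By Lemma \ref{lem:varphiZvee} and Theorem \ref{thm:main1}, $Z^\vee_n\neq 0$ for $n\geq 1$ (the $Z^\vee_n$ are even algebraically independent, by Corollary \ref{cor:clarify}). Hence (\ref{eq:nGGWW1}) is not a formal consequence of those relations; any derivation must use information specific to $U^+_q$ beyond Lemmas \ref{lem:nrel1}, \ref{lem:nrel2} --- which is exactly why the shuffle-algebra computation (or the explicit product formulas of \cite[Propositions~6.2,~7.2]{alternating} together with the vanishing of the relevant constant terms in $U^+_q$) is needed. You should either carry out the generating-function form of the shuffle identity, namely that in $\mathbb V$ one has $G(q^{-1}t)\star\tilde G(qt)=1+qt\,W^-(q^{-1}t)\star W^+(qt)$ with $G(t)=\sum_n (yx)^n t^n$ and so on, or fall back on the citation.
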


\begin{lemma}
\label{lem:nrecgen}
{\rm (See \cite[Proposition~8.2]{alternating}.)}
Using the equations below, the alternating elements in
$U^+_q$
are recursively obtained from $A, B$ in the following order:
\begin{align*}
W_0, \quad W_1, \quad G_1, \quad \tilde G_1, \quad W_{-1}, \quad W_2, \quad
 G_2, \quad \tilde G_2, \quad W_{-2}, \quad W_3, \quad \ldots
\end{align*}
We have $W_0=A$ and $W_1=B$.
For $n\geq 1$,
\begin{align}
G_n &= \frac{q\sum_{k=0}^{n-1} W_{-k} W_{n-k} q^{n-1-2k}
-
\sum_{k=1}^{n-1} G_k  \tilde G_{n-k} q^{n-2k}}{q^n+q^{-n}}
+ 
\frac{W_n  W_0-W_0 W_n}{(1+q^{-2n})(1-q^{-2})},
\label{eq:nsolvG}
\\
\tilde G_n &= G_n + \frac{W_0 W_n-W_n W_0}{1-q^{-2}},
\label{eq:nsolvGt}
\\
W_{-n} &= \frac{q W_0 G_n - q^{-1} G_n  W_0}{q-q^{-1}},
\label{eq:nsolvWm}
\\
W_{n+1} &= \frac{q G_n W_1 - q^{-1} W_1  G_n}{q-q^{-1}}.
\label{eq:nsolvWp}
\end{align}
\end{lemma}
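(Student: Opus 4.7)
My plan is to derive each of the four recursion formulas by specializing one of the relations listed earlier in the excerpt, and then verify that the stated order of computation is consistent (so that every quantity on the right-hand side has already been produced before it is needed).

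First I would obtain the three ``small'' formulas for $\tilde G_n$, $W_{-n}$ and $W_{n+1}$. Specializing \eqref{eq:n3p1vv} at $k=n-1$ gives $[W_0,W_n]=(1-q^{-2})(\tilde G_n-G_n)$, which rearranges to \eqref{eq:nsolvGt}. Specializing \eqref{eq:n3p2vv} at $k=n-1$ gives $[W_0,G_n]_q=(q-q^{-1})W_{-n}$, i.e.\ $qW_0G_n-q^{-1}G_nW_0=(q-q^{-1})W_{-n}$, yielding \eqref{eq:nsolvWm}. Specializing \eqref{eq:n3p3vv} at $k=n-1$ gives $[G_n,W_1]_q=(q-q^{-1})W_{n+1}$, yielding \eqref{eq:nsolvWp}.

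Next I would derive \eqref{eq:nsolvG} by isolating $G_n$ in \eqref{eq:nGGWW1}. Separating the $k=0$ and $k=n$ terms on the left (using $G_0=\tilde G_0=1$),
\begin{align*}
q^n\tilde G_n+q^{-n}G_n+\sum_{k=1}^{n-1}G_k\tilde G_{n-k}q^{n-2k}
=q\sum_{k=0}^{n-1}W_{-k}W_{n-k}q^{n-1-2k}.
\end{align*}
Substituting $\tilde G_n=G_n+(W_0W_n-W_nW_0)/(1-q^{-2})$ from \eqref{eq:nsolvGt} collapses the $G_n,\tilde G_n$ contribution to $(q^n+q^{-n})G_n$ plus a $W_0W_n-W_nW_0$ term. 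Solving for $G_n$ and using the identity $q^n/(q^n+q^{-n})=1/(1+q^{-2n})$ produces exactly \eqref{eq:nsolvG}.

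Finally I would verify the ordering claim. By induction along the sequence $W_0,W_1,G_1,\tilde G_1,W_{-1},W_2,G_2,\tilde G_2,\dots$ I observe that the right-hand side of \eqref{eq:nsolvG} at stage $n$ involves only $W_0,\dots,W_n$, $W_{-1},\dots,W_{-(n-1)}$, and $G_1,\tilde G_1,\dots,G_{n-1},\tilde G_{n-1}$; then \eqref{eq:nsolvGt} uses only $G_n,W_0,W_n$; then \eqref{eq:nsolvWm} uses only $W_0,G_n$; and \eqml{eq:nsolvWp} uses only $G_n,W_1$. Thus at each stage $n\geq 1$ the required inputs are already available from earlier stages, so the recursion proceeds as stated. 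There is no real obstacle; the only thing to watch is the arithmetic identity $q^n/(q^n+q^{-n})=1/(1+q^{-2n})$, which is what makes the coefficients in \eqref{eq:nsolvG} come out exactly as written.
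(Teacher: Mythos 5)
Your proposal is correct and follows essentially the route the paper intends: the paper cites this lemma from \cite[Proposition~8.2]{alternating} without reproving it, but its proof of the exact analogue for $\mathcal U^+_q$ (Lemma \ref{lem:nrecgenCal}) uses the same steps you describe — read off \eqref{eq:nsolvGt}, \eqref{eq:nsolvWm}, \eqref{eq:nsolvWp} from \eqref{eq:n3p1vv}--\eqref{eq:n3p3vv}, and solve for $G_n$ by adding $q^n$ times the $\tilde G_n$ formula into \eqref{eq:nGGWW1} (which plays the role of $Z^\vee_n=0$ in $U^+_q$). Your verification of the ordering and the identity $q^n/(q^n+q^{-n})=(1+q^{-2n})^{-1}$ is exactly what is needed.
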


\begin{lemma}
\label{lem:nsigSact}
{\rm (See \cite[Proposition~5.3]{alternating}.)}
The maps $\sigma$, $S$  from
Lemma 
\ref{lem:nAAut}
act on the alternating
elements as follows. For $k \in \mathbb N$,
\begin{enumerate}
\item[\rm(i)]
the map $\sigma $ sends
\begin{align*}
W_{-k} \mapsto W_{k+1}, \qquad \quad
W_{k+1} \mapsto W_{-k}, \qquad \quad
G_{k} \mapsto \tilde G_{k}, \qquad \quad
\tilde G_{k} \mapsto G_{k};
\end{align*}
\item[\rm (ii)] the map $S$ sends
\begin{align*}
W_{-k} \mapsto W_{-k}, \qquad \quad
W_{k+1} \mapsto W_{k+1}, \qquad \quad
G_{k} \mapsto \tilde G_{k}, \qquad \quad
\tilde G_{k} \mapsto G_{k}.
\end{align*}
\end{enumerate}
\end{lemma}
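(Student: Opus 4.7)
The plan is to transport the maps $\sigma$ and $S$ to the $q$-shuffle algebra $\mathbb V$ via the injective algebra homomorphism $\natural:U^+_q\to\mathbb V$, whose explicit action on the alternating elements was recalled in the introduction.

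For part (i), I would introduce the linear involution $\tilde\sigma:\mathbb V\to\mathbb V$ that swaps $x$ and $y$ letter by letter in every basis word and extends linearly. Because $\langle u,v\rangle$ is a symmetric function of $u,v\in\{x,y\}$, the map $\tilde\sigma$ is an algebra automorphism of $(\mathbb V,\star)$. A direct comparison on $\natural(A)=x$ and $\natural(B)=y$ gives $\tilde\sigma\circ\natural=\natural\circ\sigma$ on $A,B$; since both sides are algebra homomorphisms and $A,B$ generate $U^+_q$, the identity holds on all of $U^+_q$. Applying $\tilde\sigma$ to the explicit $q$-shuffle images and using injectivity of $\natural$, each of the four formulas in (i) drops out by inspection; for instance, $\tilde\sigma(xyx\cdots yx)=yxy\cdots xy$ yields $\sigma(W_{-k})=W_{k+1}$.

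For part (ii), I would introduce the linear map $\tilde S:\mathbb V\to\mathbb V$ that reverses each basis word. A short combinatorial check shows that $\tilde S$ is an algebra antiautomorphism of $(\mathbb V,\star)$: reversing a shuffle of $u,v$ produces a shuffle of $v^{\mathrm{rev}},u^{\mathrm{rev}}$, and the natural bijection on inversion pairs preserves the $q$-weight because $\langle\cdot,\cdot\rangle$ is symmetric. Since $\tilde S$ fixes $x$ and $y$, the same uniqueness argument as in (i) yields $\tilde S\circ\natural=\natural\circ S$. Now $\natural(W_{-k})$ and $\natural(W_{k+1})$ are palindromes, hence fixed by $\tilde S$, while $\tilde S$ interchanges $(yx)^{k+1}$ and $(xy)^{k+1}$. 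Pulling back through $\natural$ produces the four formulas in (ii).

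The main obstacle is the combinatorial verification that word reversal is an antiautomorphism of $(\mathbb V,\star)$: one must exhibit an explicit bijection between the inversion pairs of a shuffle $w$ of $u,v$ and the inversion pairs of $w^{\mathrm{rev}}$ viewed as a shuffle of $v^{\mathrm{rev}},u^{\mathrm{rev}}$, and check that the weights agree. By contrast, the automorphism property of $\tilde\sigma$ is immediate from the symmetry of $\langle\cdot,\cdot\rangle$, and every remaining step is routine bookkeeping once the two transported maps are in hand.
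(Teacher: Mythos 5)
Your proof is correct and follows essentially the same route as the source the paper cites for this lemma (\cite[Proposition~5.3]{alternating}): the paper itself gives no proof beyond that citation, and the standard argument is exactly your transport through $\natural$ into the $q$-shuffle algebra, using the letter-swap automorphism for $\sigma$ and the word-reversal antiautomorphism for $S$. The one step you flag as needing care --- that word reversal is an antiautomorphism of $(\mathbb V,\star)$ --- does go through, since reversal carries shuffles of $(u,v)$ bijectively to shuffles of $(v^{\rm rev},u^{\rm rev})$ and the symmetry of $\langle\cdot,\cdot\rangle$ preserves the inversion weights.
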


\begin{lemma}
\label{lem:althom}
{\rm (See \cite[Section~5]{alternating}.)}
The alternating elements of
$U^+_q$ are homogeneous, with degrees shown below:
\bigskip

\centerline{
\begin{tabular}[t]{c|c}
{\rm alternating element} & {\rm degree} 
   \\  \hline
   $ W_{-k}$ & $(k+1,k)$
	 \\
   $ W_{k+1}$ & $(k,k+1)$
          \\
   $ G_{k}$ & $(k,k)$
          \\
   $ \tilde G_{k}$ & $(k,k)$
	 \end{tabular}
              }
              \medskip

\end{lemma}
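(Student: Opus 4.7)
The plan is to prove Lemma \ref{lem:althom} by induction along the recursion of Lemma \ref{lem:nrecgen}, which produces the alternating elements in the order
\[
W_0,\ W_1,\ G_1,\ \tilde G_1,\ W_{-1},\ W_2,\ G_2,\ \tilde G_2,\ W_{-2},\ W_3,\ \ldots
\]
and expresses each new element as a polynomial in previously produced ones. Since the ambient $(\mathbb N\times\mathbb N)$-grading on $U^+_q$ is multiplicative and has $\deg A=(1,0)$, $\deg B=(0,1)$, it suffices to show that in each recursive formula every summand carries the claimed total bidegree; the new element is then automatically homogeneous of that bidegree.

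The base case is immediate: $W_0=A$ has bidegree $(1,0)$ which agrees with the table at $k=0$, and $W_1=B$ has bidegree $(0,1)$ which agrees with the table at $k=0$. For the inductive step, fix $n\ge 1$ and assume the lemma holds for every alternating element preceding $G_n$ in the above listing. I would verify:
\begin{enumerate}
\item[(i)] In formula (\ref{eq:nsolvG}) for $G_n$: the factor $W_{-k}W_{n-k}$ (for $0\le k\le n-1$) has bidegree $(k+1,k)+(n-k-1,n-k)=(n,n)$; the factor $G_k\tilde G_{n-k}$ (for $1\le k\le n-1$) has bidegree $(k,k)+(n-k,n-k)=(n,n)$; and $W_nW_0$ and $W_0W_n$ both have bidegree $(n-1,n)+(1,0)=(n,n)$. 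Thus $G_n$ is homogeneous of bidegree $(n,n)$.
\item[(ii)] In formula (\ref{eq:nsolvGt}) for $\tilde G_n$: both $G_n$ and $W_0W_n-W_nW_0$ have bidegree $(n,n)$, so $\tilde G_n$ has bidegree $(n,n)$.
\item[(iii)] In formula (\ref{eq:nsolvWm}) for $W_{-n}$: both $W_0G_n$ and $G_nW_0$ have bidegree $(1,0)+(n,n)=(n+1,n)$, so $W_{-n}$ has bidegree $(n+1,n)$.
\item[(iv)] In formula (\ref{eq:nsolvWp}) for $W_{n+1}$: both $G_nW_1$ and $W_1G_n$ have bidegree $(n,n)+(0,1)=(n,n+1)$, so $W_{n+1}$ has bidegree $(n,n+1)$.
\end{enumerate}
All four claimed bidegrees match the table, which closes the induction.

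There is no real obstacle here; the only mild subtlety is bookkeeping, namely that at the stage where $G_n$ is produced, every element appearing on the right-hand side of (\ref{eq:nsolvG}) has already been shown to be homogeneous of the expected bidegree, and likewise for the subsequent formulas producing $\tilde G_n$, $W_{-n}$, $W_{n+1}$ in that order. The recursion of Lemma \ref{lem:nrecgen} is designed precisely so that this ordering works, so the induction goes through without further complication.
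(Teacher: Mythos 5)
Your proof is correct. The induction is well-founded: the ordering in Lemma \ref{lem:nrecgen} guarantees that when $G_n$ is produced, every element appearing on the right-hand side of (\ref{eq:nsolvG}) — namely $W_{-k}$ for $k\le n-1$, $W_j$ for $j\le n$, and $G_k,\tilde G_k$ for $k\le n-1$ — has already been treated, and all of your bidegree sums check out, so each right-hand side is a linear combination of monomials of a single common bidegree and the new element is homogeneous of that bidegree.

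Note, however, that the paper offers no proof of this lemma at all; it simply cites Section~5 of \cite{alternating}. In that source the homogeneity is essentially immediate from the definition of the alternating elements: the injective map $\natural$ sends $W_{-k}$, $W_{k+1}$, $G_k$, $\tilde G_k$ to scalar multiples of the single words $xyx\cdots x$, $yxy\cdots y$, $yx\cdots yx$, $xy\cdots xy$ in the $q$-shuffle algebra $\mathbb V$, and one reads off the bidegree by counting the letters $x$ and $y$ in each word (e.g.\ $k+1$ copies of $x$ and $k$ copies of $y$ for $W_{-k}$), since $\natural$ respects the grading. Your route trades that direct count for an induction along the recursion of Lemma \ref{lem:nrecgen}; it is longer but has the virtue of using only facts restated in the present paper rather than the shuffle-algebra realization. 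Both arguments are sound.
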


\section{The algebra $\mathcal U^+_q$}

\noindent Motivated by 
Lemmas
\ref{lem:nrel1},
\ref{lem:nrel2}
and 
\cite[Definition~3.1]{basnc},
we now introduce
the algebra $\mathcal U^+_q$.

\begin{definition}
\label{def:calU}
\rm We define the algebra $\mathcal U^+_q$ by generators
\begin{align}
\label{eq:calWWGGn}
\lbrace \mathcal W_{-k}\rbrace_{k \in \mathbb N}, \quad 
\lbrace  \mathcal W_{k+1}\rbrace_{k \in \mathbb N}, \quad
\lbrace  \mathcal G_{k+1}\rbrace_{k\in \mathbb N}, \quad
\lbrace \mathcal {\tilde G}_{k+1}\rbrace_{k \in \mathbb N}
\end{align}
and relations
\begin{align}
&
 \lbrack \mathcal W_0,  \mathcal W_{k+1}\rbrack= 
\lbrack  \mathcal W_{-k}, \mathcal W_{1}\rbrack=
(1-q^{-2})(\mathcal {\tilde G}_{k+1} - \mathcal G_{k+1}),
\label{eq:n3p1vvC}
\\
&
\lbrack \mathcal W_0,  \mathcal G_{k+1}\rbrack_q= 
\lbrack {\mathcal {\tilde G}}_{k+1}, \mathcal W_{0}\rbrack_q= 
 (q-q^{-1})\mathcal W_{-k-1},
\label{eq:n3p2vvC}
\\
&
\lbrack \mathcal G_{k+1},  \mathcal W_{1}\rbrack_q= 
\lbrack  \mathcal W_{1}, { \mathcal {\tilde G}}_{k+1}\rbrack_q= 
(q-q^{-1}) \mathcal W_{k+2},
\label{eq:n3p3vvC}
\\
&
\lbrack \mathcal W_{-k},  \mathcal W_{-\ell}\rbrack=0,  \qquad 
\lbrack  \mathcal W_{k+1}, \mathcal  W_{\ell+1}\rbrack= 0,
\label{eq:n3p4vvC}
\\
&
\lbrack  \mathcal W_{-k},\mathcal  W_{\ell+1}\rbrack+
\lbrack \mathcal W_{k+1}, \mathcal W_{-\ell}\rbrack= 0,
\label{eq:n3p5vvC}
\\
&
\lbrack \mathcal  W_{-k},\mathcal  G_{\ell+1}\rbrack+
\lbrack \mathcal G_{k+1}, \mathcal W_{-\ell}\rbrack= 0,
\label{eq:n3p6vvC}
\\
&
\lbrack \mathcal W_{-k}, \mathcal {\tilde G}_{\ell+1}\rbrack+
\lbrack  \mathcal {\tilde G}_{k+1}, \mathcal  W_{-\ell}\rbrack= 0,
\label{eq:n3p7vvC}
\\
&
\lbrack  \mathcal W_{k+1}, \mathcal G_{\ell+1}\rbrack+
\lbrack  \mathcal G_{k+1},\mathcal W_{\ell+1}\rbrack= 0,
\label{eq:n3p8vvC}
\\
&
\lbrack  \mathcal W_{k+1},  \mathcal {\tilde G}_{\ell+1}\rbrack+
\lbrack  \mathcal {\tilde G}_{k+1},  \mathcal W_{\ell+1}\rbrack= 0,
\label{eq:n3p9vvC}
\\
&
\lbrack \mathcal G_{k+1},  \mathcal G_{\ell+1}\rbrack=0,
\qquad 
\lbrack \mathcal {\tilde G}_{k+1}, \mathcal {\tilde G}_{\ell+1}\rbrack= 0,
\label{eq:n3p10vvC}
\\
&
\lbrack \mathcal {\tilde G}_{k+1}, \mathcal G_{\ell+1}\rbrack+
\lbrack  \mathcal G_{k+1},  \mathcal {\tilde G}_{\ell+1}\rbrack= 0.
\label{eq:n3p11vC}
\end{align}
The generators (\ref{eq:calWWGGn}) are called {\it alternating}.
For notational convenience define
$ \mathcal G_{0}=1$ and
$\mathcal {\tilde G}_{0} = 1$.
\end{definition}

\begin{lemma} \label{calUrel3}
For $k,\ell \in \mathbb N$ the following
relations hold in $\mathcal U^+_q$:
\begin{align}
&\lbrack \mathcal W_{-k}, \mathcal G_{\ell}\rbrack_q = 
\lbrack \mathcal W_{-\ell}, \mathcal G_{k}\rbrack_q,
\qquad \quad
\lbrack \mathcal G_k, \mathcal W_{\ell+1}\rbrack_q = 
\lbrack \mathcal G_\ell, \mathcal W_{k+1}\rbrack_q,
\label{eq:calngg1}
\\
&
\lbrack \mathcal {\tilde G}_k, \mathcal W_{-\ell}\rbrack_q = 
\lbrack \mathcal {\tilde G}_\ell, \mathcal W_{-k}\rbrack_q,
\qquad \quad 
\lbrack \mathcal W_{\ell+1}, \mathcal {\tilde G}_{k}\rbrack_q = 
\lbrack \mathcal W_{k+1}, \mathcal {\tilde G}_{\ell}\rbrack_q,
\label{eq:calngg2}
\\
&\lbrack \mathcal G_{k}, \mathcal {\tilde G}_{\ell+1}\rbrack -
\lbrack \mathcal G_{\ell}, \mathcal {\tilde G}_{k+1}\rbrack =
q\lbrack \mathcal W_{-\ell}, \mathcal W_{k+1}\rbrack_q-
q\lbrack \mathcal W_{-k}, \mathcal W_{\ell+1}\rbrack_q,
\label{eq:calngg3}
\\
&\lbrack \mathcal  {\tilde G}_{k}, \mathcal  G_{\ell+1}\rbrack -
\lbrack \mathcal  {\tilde G}_{\ell}, \mathcal G_{k+1}\rbrack =
q \lbrack \mathcal  W_{\ell+1}, \mathcal W_{-k}\rbrack_q-
q\lbrack \mathcal W_{k+1}, \mathcal W_{-\ell}\rbrack_q,
\label{eq:calngg4}
\\
&\lbrack \mathcal G_{k+1}, \mathcal {\tilde G}_{\ell+1}\rbrack_q -
\lbrack \mathcal G_{\ell+1}, \mathcal {\tilde G}_{k+1}\rbrack_q =
q\lbrack \mathcal W_{-\ell}, \mathcal W_{k+2}\rbrack-
q\lbrack \mathcal W_{-k}, \mathcal W_{\ell+2}\rbrack,
\label{eq:calngg5}
\\
&\lbrack \mathcal {\tilde G}_{k+1},  \mathcal G_{\ell+1}\rbrack_q -
\lbrack \mathcal {\tilde G}_{\ell+1},  \mathcal G_{k+1}\rbrack_q =
q \lbrack \mathcal W_{\ell+1}, \mathcal W_{-k-1}\rbrack-
q\lbrack \mathcal W_{k+1}, \mathcal W_{-\ell-1}\rbrack.
\label{eq:calngg6}
\end{align}
\end{lemma}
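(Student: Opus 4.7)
The plan is to lift the derivation used in \cite[Propositions~3.1,~3.2]{baspp} verbatim into $\mathcal{U}^+_q$. By Note \ref{note:123}, the relations \eqref{eq:ngg1}--\eqref{eq:ngg6} of Lemma \ref{lem:rel3} are formal consequences of the relations in Lemmas \ref{lem:nrel1} and \ref{lem:nrel2}. The defining relations \eqref{eq:n3p1vvC}--\eqref{eq:n3p11vC} of $\mathcal{U}^+_q$ are exactly the calligraphic transcriptions of the relations in Lemmas \ref{lem:nrel1} and \ref{lem:nrel2}. So every algebraic manipulation used to deduce \eqref{eq:ngg1}--\eqref{eq:ngg6} inside $U^+_q$ transfers, symbol for symbol, into $\mathcal{U}^+_q$ to yield \eqref{eq:calngg1}--\eqref{eq:calngg6}.

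Concretely, for each identity in Lemma \ref{calUrel3} I would expand the relevant (ordinary or $q$-)commutator using the Leibniz rule $[X,YZ]=[X,Y]Z+Y[X,Z]$, which holds in any associative algebra. I would then invoke the ``commutation with $\mathcal{W}_0$ or $\mathcal{W}_1$'' relations \eqref{eq:n3p2vvC} and \eqref{eq:n3p3vvC} to trade $\mathcal{G}_k,\mathcal{\tilde G}_k$ against $\mathcal{W}_{-k+1},\mathcal{W}_{k}$, and reorganize the resulting expressions using the ``sum-of-commutators-vanishes'' relations \eqref{eq:n3p5vvC}--\eqref{eq:n3p9vvC}. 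The $k \leftrightarrow \ell$ symmetry of the intermediate sums then collapses many terms and leaves the claimed identity. No appeal to any $q$-Serre relation, or to any property specific to $U^+_q$ beyond the defining relations of $\mathcal{U}^+_q$, is required.

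The main obstacle, and the only reason the lemma is not literally immediate from Note \ref{note:123}, is bookkeeping: one must verify that the derivation in \cite[Propositions~3.1,~3.2]{baspp} uses nothing beyond the relations of Lemmas \ref{lem:nrel1}, \ref{lem:nrel2}, and in particular no identity that is true in $U^+_q$ but has not been posited as a defining relation of $\mathcal{U}^+_q$ (for instance, no use of the existence of the $A,B$ generators or the $q$-Serre relations). A careful pass through the argument confirms this.

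In the final write-up I would therefore either reproduce the short manipulation for one representative identity (say \eqref{eq:calngg3}) to convey the flavor and then cite \cite[Propositions~3.1,~3.2]{baspp} for the remaining five, or simply cite \cite[Propositions~3.1,~3.2]{baspp} with an explicit remark that the proof given there only invokes relations of the form \eqref{eq:n3p1vvC}--\eqref{eq:n3p11vC} and is therefore valid in $\mathcal{U}^+_q$.
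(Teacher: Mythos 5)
Your proposal is correct and matches the paper's argument: the paper's entire proof is ``By Note \ref{note:123},'' i.e.\ the relations of Lemma \ref{calUrel3} follow from the defining relations \eqref{eq:n3p1vvC}--\eqref{eq:n3p11vC} because, by \cite[Propositions~3.1,~3.2]{baspp}, the corresponding relations in $U^+_q$ are formal consequences of the relations in Lemmas \ref{lem:nrel1}, \ref{lem:nrel2} alone. Your additional remarks about verifying that the cited derivation uses nothing beyond those relations are exactly the point the paper is implicitly relying on.
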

\begin{proof} By Note
\ref{note:123}.
\end{proof}

\noindent The algebras $\mathcal U^+_q$ and $U^+_q$ are related
as follows.

\begin{lemma} 
\label{lem:gamma}
There exists an algebra homomorphism
$\gamma: \mathcal U^+_q \to U^+_q$ that sends
\begin{align*}
\mathcal W_{-n} \mapsto W_{-n},\quad \qquad
\mathcal W_{n+1} \mapsto W_{n+1},\quad \qquad
\mathcal G_{n} \mapsto G_{n},\quad \qquad
\mathcal {\tilde G}_{n} \mapsto \tilde G_{n}
\end{align*}
for $n \in \mathbb N$.
Moreover $\gamma$ is surjective.
\end{lemma}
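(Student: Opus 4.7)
The plan is to exploit the universal property of the algebra $\mathcal U^+_q$ given by Definition \ref{def:calU}: since $\mathcal U^+_q$ is defined by generators and relations, to produce an algebra homomorphism $\gamma:\mathcal U^+_q \to U^+_q$ with the prescribed action on generators it suffices to verify that the proposed images in $U^+_q$ satisfy every one of the defining relations (\ref{eq:n3p1vvC})--(\ref{eq:n3p11vC}).

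The first step is to write down the proposed assignment on generators, namely $\mathcal W_{-n}\mapsto W_{-n}$, $\mathcal W_{n+1}\mapsto W_{n+1}$, $\mathcal G_n\mapsto G_n$, $\mathcal{\tilde G}_n\mapsto \tilde G_n$, and to observe that under this assignment each defining relation of $\mathcal U^+_q$ in Definition \ref{def:calU} becomes, term for term, precisely one of the relations of Lemmas \ref{lem:nrel1} and \ref{lem:nrel2} in $U^+_q$. Since those lemmas assert that the alternating elements of $U^+_q$ do satisfy exactly these relations, the universal property of a presentation by generators and relations yields a well-defined algebra homomorphism $\gamma$ acting on generators as specified. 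There is essentially no calculation here; the entire construction of $\mathcal U^+_q$ was engineered so that this step is immediate.

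For the surjectivity claim, I would note that by Definition \ref{def:nposp} the algebra $U^+_q$ is generated by $A$ and $B$, and recall from Lemma \ref{lem:nrecgen} that $W_0=A$ and $W_1=B$. Hence $\gamma(\mathcal W_0)=W_0=A$ and $\gamma(\mathcal W_1)=W_1=B$, so the image of $\gamma$ contains a generating set for $U^+_q$ and therefore equals $U^+_q$.

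The only place where there could be any real subtlety is the opening bookkeeping check, and even that is routine: in each relation of Definition \ref{def:calU} one simply removes the calligraphic font to land in the corresponding numbered relation of Lemmas \ref{lem:nrel1}, \ref{lem:nrel2}. The relations of Lemma \ref{lem:rel3} / Lemma \ref{calUrel3} are not needed at this stage (indeed, by Note \ref{note:123} they are consequences of the defining relations). Thus the main obstacle is not mathematical but merely notational, and the proof reduces to a single sentence invoking the universal property together with Lemmas \ref{lem:nrel1}, \ref{lem:nrel2}, followed by the one-line surjectivity argument above.
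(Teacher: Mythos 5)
Your proof is correct and is exactly the argument the paper intends: the paper's own proof is the single line ``By Definition \ref{def:calU},'' which compresses precisely your observation that the defining relations of $\mathcal U^+_q$ are the calligraphic copies of the relations in Lemmas \ref{lem:nrel1}, \ref{lem:nrel2} already known to hold in $U^+_q$, so the universal property applies, and surjectivity follows since $\gamma(\mathcal W_0)=A$ and $\gamma(\mathcal W_1)=B$ generate $U^+_q$.
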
 
\begin{proof} 
By Definition
\ref{def:calU}.
\end{proof}

\noindent The kernel of $\gamma$ is described in Section 7.

\begin{definition}
\label{def:polyalg}
\rm
Let $\lbrace z_n\rbrace_{n=1}^\infty$
denote mutually commuting indeterminates. Let
$\mathbb F\lbrack z_1, z_2,\ldots\rbrack$
denote the algebra consisting of the polynomials in
$z_1, z_2,\ldots $ that have all coefficients in 
$\mathbb F$. 
For notational convenience define $z_0=1$.
\end{definition}

\begin{lemma}
\label{lem:eta}
There exists an algebra homomorphism
$\eta: \mathcal U^+_q\to 
 \mathbb F \lbrack z_1,z_2,
\ldots \rbrack$ that sends
\begin{align}
\mathcal W_{-n} \mapsto 0,\quad \qquad 
\mathcal W_{n+1} \mapsto 0,\quad \qquad 
\mathcal G_{n} \mapsto z_n, \quad \qquad 
\tilde {\mathcal  G}_{n} \mapsto z_n
\label{eq:etasends}
\end{align}
for $n \in \mathbb N$. 
Moreover $\eta$ is surjective.
\end{lemma}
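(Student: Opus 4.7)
The plan is to invoke the universal property of $\mathcal U^+_q$ as a quotient of the free algebra on the alternating generators in (\ref{eq:calWWGGn}). It suffices to extend the assignment (\ref{eq:etasends}) to the free algebra and then verify that the image of every defining relation (\ref{eq:n3p1vvC})--(\ref{eq:n3p11vC}) is $0$ in $\mathbb F\lbrack z_1,z_2,\ldots\rbrack$.

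The verification should be essentially mechanical, for two reasons. First, every $\mathcal W$-generator maps to $0$, so any bracket, or $q$-bracket, in the relations that contains a $\mathcal W$-factor in each of its two slots vanishes term by term. Second, the target algebra $\mathbb F\lbrack z_1,z_2,\ldots\rbrack$ is commutative, so any ordinary bracket $[\,\cdot\,,\,\cdot\,]$ of elements in the image is automatically zero. I would walk through the relations in the order they appear:
\begin{enumerate}
\item[\rm (i)] In (\ref{eq:n3p1vvC}) the left side is $[0,0]=0$ and the right side is $(1-q^{-2})(z_{k+1}-z_{k+1})=0$.
\item[\rm (ii)] In (\ref{eq:n3p2vvC}) and (\ref{eq:n3p3vvC}) each $q$-bracket on the left has a $\mathcal W_0$ or $\mathcal W_1$ factor and so maps to $q\cdot 0\cdot z_{k+1}-q^{-1}z_{k+1}\cdot 0=0$, while the $\mathcal W_{-k-1}$ and $\mathcal W_{k+2}$ on the right map to $0$.
\item[\rm (iii)] Relations (\ref{eq:n3p4vvC})--(\ref{eq:n3p9vvC}) each contain at least one $\mathcal W$-factor in every bracket, so every term vanishes.
\item[\rm (iv)] Relations (\ref{eq:n3p10vvC}), (\ref{eq:n3p11vC}) involve only $\mathcal G$ and $\tilde{\mathcal G}$ generators, whose images $z_k,z_\ell$ commute, hence all the brackets on the left are zero.
\end{enumerate}
This establishes the existence of $\eta$.

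Surjectivity is immediate: the image of $\eta$ contains $\eta(\mathcal G_n)=z_n$ for all $n\geq 1$, and $\lbrace z_n\rbrace_{n=1}^\infty$ generates $\mathbb F\lbrack z_1,z_2,\ldots\rbrack$ as an algebra.

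I do not anticipate any real obstacle here; the main item is bookkeeping through the list of defining relations. The assignment has been arranged precisely so that the two structural features of the target (commutativity and the vanishing of all $\mathcal W$-images) trivialize each relation, which is also why no use of Lemma \ref{calUrel3} or Lemma \ref{prop:nGGWW} is required in this construction.
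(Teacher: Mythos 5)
Your proposal is correct and is exactly the argument the paper intends: the paper's proof of this lemma is the single line ``Use Definition \ref{def:calU},'' i.e., check that the assignment (\ref{eq:etasends}) kills every defining relation (\ref{eq:n3p1vvC})--(\ref{eq:n3p11vC}), which is the bookkeeping you carried out. Your observation that commutativity of the target plus the vanishing of all $\mathcal W$-images trivializes each relation is the right way to organize that check, and the surjectivity argument is the intended one.
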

\begin{proof} Use 
Definition
\ref{def:calU}.
\end{proof}

\noindent 
The kernel of $\eta$ is described in Section 7.

\medskip
\noindent We have indicated how $\mathcal U^+_q$ is related to
$U^+_q$ and
$\mathbb F\lbrack z_1,z_2,\ldots \rbrack$. Next we consider how
 $\mathcal U^+_q$ is related to
$U^+_q \otimes \mathbb F\lbrack z_1,z_2,\ldots \rbrack$. 
\medskip

\begin{lemma}
\label{lem:varphi}
There exists an algebra homomorphism $\varphi:
\mathcal U^+_q \to U^+_q \otimes 
\mathbb F \lbrack z_1, z_2,\ldots\rbrack$ that sends
\begin{align*}
\mathcal W_{-n} &\mapsto \sum_{k=0}^n W_{k-n} \otimes z_k,
\quad \qquad \qquad 
\mathcal W_{n+1} \mapsto \sum_{k=0}^n W_{n+1-k} \otimes z_k,
\\
\mathcal G_{n} &\mapsto \sum_{k=0}^n G_{n-k} \otimes z_k,
\quad \qquad \qquad
\mathcal {\tilde G}_{n} \mapsto \sum_{k=0}^n \tilde G_{n-k} \otimes z_k
\end{align*}
for $n \in \mathbb N$. In particular $\varphi$ sends
\begin{align}
\mathcal W_0 \mapsto W_0 \otimes 1,
\qquad \qquad 
\mathcal W_1 \mapsto W_1 \otimes 1.
\label{eq:vphiW0W1}
\end{align}
\end{lemma}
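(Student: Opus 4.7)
Since $\mathcal U^+_q$ is presented by the generators (\ref{eq:calWWGGn}) and the relations (\ref{eq:n3p1vvC})--(\ref{eq:n3p11vC}), it suffices to send these generators to the indicated elements of $U^+_q \otimes \mathbb F \lbrack z_1, z_2, \ldots \rbrack$ and verify that each defining relation holds for the images. The key structural feature is that each $1 \otimes z_j$ is central in $U^+_q \otimes \mathbb F \lbrack z_1, z_2, \ldots \rbrack$, so for any finite sums $X = \sum_i X_i \otimes z_i$ and $Y = \sum_j Y_j \otimes z_j$ we have $\lbrack X, Y \rbrack = \sum_{i,j}\lbrack X_i,Y_j\rbrack \otimes z_iz_j$ and similarly $\lbrack X, Y\rbrack_q = \sum_{i,j}\lbrack X_i,Y_j\rbrack_q \otimes z_iz_j$. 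Thus each defining relation reduces to a term-by-term check against Lemmas \ref{lem:nrel1} and \ref{lem:nrel2}.

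\medskip

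\noindent For the homogeneous relations (\ref{eq:n3p4vvC})--(\ref{eq:n3p11vC}), I would substitute $a = k-i$ and $b = \ell-j$ and apply the corresponding identity from Lemma \ref{lem:nrel2} to each summand; the pairs cancel. Where $\varphi(\mathcal G_n)$ or $\varphi(\tilde{\mathcal G}_n)$ contributes a boundary term at index $0$, the conventions $G_0 = \tilde G_0 = 1$ force the commutator to vanish, preventing spurious contributions. For the inhomogeneous relations (\ref{eq:n3p1vvC})--(\ref{eq:n3p3vvC}), the boundary term is precisely what rebuilds the right-hand side: for instance in (\ref{eq:n3p2vvC}), expanding $\lbrack \varphi(\mathcal W_0), \varphi(\mathcal G_{k+1}) \rbrack_q = \sum_{j=0}^{k+1} \lbrack W_0, G_{k+1-j}\rbrack_q \otimes z_j$ gives $(q-q^{-1})W_{j-k-1} \otimes z_j$ for $j \leq k$ by (\ref{eq:n3p2vv}), while the $j = k+1$ term equals $(q-q^{-1})W_0 \otimes z_{k+1}$ because $G_0 = 1$; together these assemble into $(q-q^{-1})\varphi(\mathcal W_{-k-1})$. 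Relation (\ref{eq:n3p1vvC}) is similar, with the $j=k+1$ boundary terms $\tilde G_0 \otimes z_{k+1}$ and $G_0 \otimes z_{k+1}$ cancelling against each other on the right, while (\ref{eq:n3p3vvC}) is obtained by the same pattern applied on the other end.

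\medskip

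\noindent The identity (\ref{eq:vphiW0W1}) is then immediate: setting $n = 0$ in the formulas for $\varphi(\mathcal W_{-n})$ and $\varphi(\mathcal W_{n+1})$ yields $W_0 \otimes z_0 = W_0 \otimes 1$ and $W_1 \otimes z_0 = W_1 \otimes 1$. I do not expect a conceptual obstacle; the work is combinatorial bookkeeping across the eleven relations, where one must align the upper limits of the sums defining $\varphi$ with the ranges of $k, \ell$ in Lemmas \ref{lem:nrel1} and \ref{lem:nrel2}, and track how the $G_0 = \tilde G_0 = 1$ boundary terms either vanish or supply exactly the shift that produces the right-hand side.
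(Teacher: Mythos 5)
Your proposal is correct and is exactly the paper's argument: the paper's proof is the one-line instruction to use Lemmas \ref{lem:nrel1}, \ref{lem:nrel2} and Definition \ref{def:calU}, i.e.\ to verify the defining relations of $\mathcal U^+_q$ on the proposed images, which is what you carry out. Your handling of the boundary terms coming from $G_0=\tilde G_0=1$ (vanishing in (\ref{eq:n3p1vvC}), supplying the index shift in (\ref{eq:n3p2vvC}) and (\ref{eq:n3p3vvC})) is the only nontrivial bookkeeping, and you have it right.
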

\begin{proof} Use Lemmas
\ref{lem:nrel1},
\ref{lem:nrel2} and 
Definition \ref{def:calU}.
\end{proof}

\noindent In Section 5 we  show that $\varphi$ is an isomorphism.
\medskip

\noindent Next we consider how $\gamma$ is related to $\varphi$.
There exists an algebra homomorphism
$\theta:
\mathbb F \lbrack z_1, z_2,\ldots \rbrack \to \mathbb F$
that sends $z_n \mapsto 0 $ for $n\geq 1$.
The map $\theta$ is surjective. Consequently 
 the vector space
 $\mathbb F \lbrack z_1, z_2,\ldots \rbrack$ is the direct sum of
$\mathbb F 1$ and the kernel of $\theta$.
This kernel is the ideal of 
 $\mathbb F \lbrack z_1, z_2,\ldots \rbrack$ generated by
$\lbrace z_n \rbrace_{n=1}^\infty$.

\begin{lemma} 
\label{lem:diag}
The following diagram commutes:

\begin{equation*}
{\begin{CD}
\mathcal U^+_q @>\varphi  >> U^+_q \otimes \mathbb F \lbrack z_1, z_2,\ldots
\rbrack
              \\
         @V \gamma VV                   @VV {\rm id} \otimes \theta V \\
         U^+_q @>>x \mapsto x\otimes 1 >
                                 U^+_q \otimes \mathbb F 
                        \end{CD}}  \qquad \quad \qquad 
			     \mbox{\rm id = identity map} 
\end{equation*}
\end{lemma}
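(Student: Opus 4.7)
My plan is to verify the diagram commutes by checking agreement on the generators of $\mathcal U^+_q$, since both compositions $(\mathrm{id} \otimes \theta) \circ \varphi$ and $(x \mapsto x \otimes 1) \circ \gamma$ are algebra homomorphisms out of $\mathcal U^+_q$. The first step is to record the effect of $\theta$ on the constants appearing in $\varphi$: by Definition \ref{def:polyalg} we have $z_0 = 1$, so $\theta(z_0) = 1$, while $\theta(z_k) = 0$ for $k \geq 1$.

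Next I would evaluate both compositions on each of the four types of alternating generators listed in (\ref{eq:calWWGGn}). For instance, using Lemma \ref{lem:varphi},
\begin{align*}
(\mathrm{id} \otimes \theta) \circ \varphi(\mathcal W_{-n})
= (\mathrm{id} \otimes \theta)\Bigl(\sum_{k=0}^n W_{k-n} \otimes z_k\Bigr)
= \sum_{k=0}^n W_{k-n} \otimes \theta(z_k)
= W_{-n} \otimes 1,
\end{align*}
which matches $(x \mapsto x \otimes 1) \circ \gamma(\mathcal W_{-n}) = W_{-n} \otimes 1$ by Lemma \ref{lem:gamma}. The calculations for $\mathcal W_{n+1}$, $\mathcal G_n$, and $\mathcal{\tilde G}_n$ are identical in shape, each collapsing the sum to its $k=0$ term.

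Since both compositions are algebra homomorphisms that agree on a generating set of $\mathcal U^+_q$, they are equal, and the diagram commutes. There is no real obstacle here; the only subtlety is remembering the convention $z_0 = 1$ from Definition \ref{def:polyalg}, which makes the $k=0$ summand survive after applying $\theta$.
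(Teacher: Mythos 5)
Your proof is correct and follows the same route as the paper: the paper's proof likewise chases each alternating generator around the diagram using Lemmas \ref{lem:gamma}, \ref{lem:varphi} and the definition of $\theta$, with the sums collapsing to the $k=0$ term exactly as you describe. You have simply written out the generator computation that the paper leaves implicit.
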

\begin{proof} Chase each alternating generator of 
$\mathcal U^+_q$
around the diagram, using Lemmas
\ref{lem:gamma},
\ref{lem:varphi} and the definition of $\theta$.
\end{proof}

\noindent Next we consider how $\eta$ is related to $\varphi$.
Since $U^+_q$ is generated by $A,B$
and the $q$-Serre relations are homogeneous,
there exists an
algebra homomorphism $\vartheta : U^+_q \to \mathbb F$
that sends $A\mapsto 0$ and $B\mapsto 0$. 
The map $\vartheta$
is surjective, so $U^+_q$ is the direct sum of $\mathbb F 1$
and the kernel of $\vartheta$. The following are the same:
(i) the kernel of $\vartheta$;
(ii) the two-sided ideal
of $U^+_q$ generated by $A, B$;
(iii) the  sum of the nontrivial homogeneous components of $U^+_q$.
By Lemma
\ref{lem:althom} the map $\vartheta$ sends
\begin{align}
W_{-k}\mapsto 0,\qquad \quad
W_{k+1} \mapsto 0,
\qquad \quad 
G_{k+1}\mapsto 0,
\qquad \quad 
\tilde G_{k+1} \mapsto 0 
\label{eq:thetaSend}
\end{align}
for $k \in \mathbb N$.

\begin{lemma}
\label{lem:diagAgain}
The following diagram commutes:

\begin{equation*}
{\begin{CD}
\mathcal U^+_q @>\varphi  >> U^+_q \otimes \mathbb F \lbrack z_1, z_2,\ldots
\rbrack
              \\
         @V \eta VV                   @VV \vartheta \otimes {\rm id} V \\
         \mathbb F \lbrack z_1,z_2,\ldots \rbrack
	 @>>x \mapsto 1\otimes x >
                                 \mathbb F \otimes \mathbb F \lbrack
				 z_1,z_2,\ldots \rbrack
                        \end{CD}} 
\end{equation*}
\end{lemma}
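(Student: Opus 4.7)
My plan is to verify commutativity by chasing each of the four families of alternating generators of $\mathcal U^+_q$ around the diagram, exactly in the spirit of the proof of Lemma \ref{lem:diag}. Since $\mathcal U^+_q$ is generated by the alternating elements, and all four maps in the diagram are algebra homomorphisms, agreement on generators suffices.

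Concretely, I will compute both compositions on each of $\mathcal W_{-n}$, $\mathcal W_{n+1}$, $\mathcal G_n$, $\tilde{\mathcal G}_n$ for $n \in \mathbb N$, using the explicit formulas in Lemma \ref{lem:varphi} for $\varphi$, the formulas in Lemma \ref{lem:eta} for $\eta$, and the action of $\vartheta$ on alternating elements recorded in (\ref{eq:thetaSend}). For example, take $\mathcal G_n$. Going right then down, $\varphi(\mathcal G_n) = \sum_{k=0}^n G_{n-k} \otimes z_k$, and applying $\vartheta \otimes \mathrm{id}$ kills every term with $n-k \geq 1$ by (\ref{eq:thetaSend}), leaving only the $k=n$ term $\vartheta(G_0) \otimes z_n = 1 \otimes z_n$ (recall $G_0 = 1$). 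Going down then right, $\eta(\mathcal G_n) = z_n$, which is sent to $1 \otimes z_n$. The case $\tilde{\mathcal G}_n$ is identical using $\tilde G_0 = 1$. For $\mathcal W_{-n}$, the right-then-down path gives $\sum_{k=0}^n \vartheta(W_{k-n}) \otimes z_k$; here $\vartheta(W_0) = \vartheta(A) = 0$ and all other $\vartheta(W_{-j}) = 0$ by (\ref{eq:thetaSend}), so the sum vanishes; the down-then-right path yields $\eta(\mathcal W_{-n}) = 0$ as well. The case $\mathcal W_{n+1}$ is analogous, using $\vartheta(W_1) = \vartheta(B) = 0$.

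The only point requiring a moment's care is handling the boundary indices, namely making sure that when the index of a $W$ or $G$ falls to $0$ one uses the conventions $W_0 = A$, $W_1 = B$, $G_0 = \tilde G_0 = 1$ correctly; this is what makes the $k=n$ term survive (and give precisely $1 \otimes z_n$) in the $\mathcal G_n$ and $\tilde{\mathcal G}_n$ cases, and makes the $\mathcal W_{\pm}$ cases collapse to $0$. There is no genuine obstacle: once the generator chase is recorded, the lemma follows because both compositions agree on a generating set of $\mathcal U^+_q$.
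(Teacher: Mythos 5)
Your proposal is correct and follows essentially the same route as the paper: the paper's proof is precisely a chase of each alternating generator around the diagram using Lemmas \ref{lem:eta}, \ref{lem:varphi} and (\ref{eq:thetaSend}). Your explicit verification of the boundary terms ($G_0=\tilde G_0=1$ surviving, the $W$-terms vanishing) is exactly the computation the paper leaves to the reader.
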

\begin{proof} Chase each alternating generator of
$\mathcal U^+_q$ around the diagram,
using Lemmas
\ref{lem:eta},
\ref{lem:varphi} and 
(\ref{eq:thetaSend}).
\end{proof}

\noindent Next we describe some symmetries of $\mathcal U^+_q$.

\begin{lemma}
\label{lem:sym} 
There exists an automorphism $\sigma$ of
$\mathcal U^+_q$ that sends
\begin{align*}
\mathcal W_{-k} \mapsto \mathcal W_{k+1}, \qquad \quad
\mathcal W_{k+1} \mapsto \mathcal W_{-k}, \qquad \quad
\mathcal G_{k} \mapsto \mathcal{\tilde G}_{k}, \qquad \quad
\mathcal {\tilde G}_{k} \mapsto \mathcal G_{k}
\end{align*}
for $k \in \mathbb N$.
There exists an antiautomorphism $S$ of  
$\mathcal U^+_q$ that sends
\begin{align*}
\mathcal W_{-k} \mapsto \mathcal W_{-k}, \qquad \quad
\mathcal W_{k+1} \mapsto \mathcal W_{k+1}, \qquad \quad
\mathcal G_{k} \mapsto \mathcal {\tilde G}_{k}, \qquad \quad
\mathcal {\tilde G}_{k} \mapsto \mathcal G_{k}
\end{align*}
for $k \in \mathbb N$.
\end{lemma}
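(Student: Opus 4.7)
The plan is to invoke the universal property of $\mathcal{U}^+_q$ as the algebra presented by the generators (\ref{eq:calWWGGn}) subject to the relations (\ref{eq:n3p1vvC})--(\ref{eq:n3p11vC}). To produce $\sigma$ it suffices to exhibit the prescribed rule on the generators and verify that each defining relation, after substituting the images, remains a valid relation in $\mathcal{U}^+_q$. For $S$ it suffices to verify that each defining relation becomes valid in $\mathcal{U}^{+,\mathrm{opp}}_q$ after the substitution, equivalently that reversing the multiplication order in the substituted relation produces one of (\ref{eq:n3p1vvC})--(\ref{eq:n3p11vC}). Once the homomorphisms exist, bijectivity follows by checking that $\sigma^2$ and $S^2$ fix every alternating generator, which is transparent since each prescribed rule is an involution on generators.

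For the bookkeeping, I would record that any automorphism $\sigma$ satisfies
\[
\sigma([X,Y]) = [\sigma(X),\sigma(Y)], \qquad \sigma([X,Y]_q) = [\sigma(X),\sigma(Y)]_q,
\]
while any antiautomorphism $S$ satisfies
\[
S([X,Y]) = -[S(X),S(Y)], \qquad S([X,Y]_q) = [S(Y),S(X)]_q.
\]
Applying these identities to (\ref{eq:n3p1vvC})--(\ref{eq:n3p11vC}), the defining relations pair up naturally. Under $\sigma$: the two halves of (\ref{eq:n3p1vvC}) swap with each other (the overall signs balance via antisymmetry of the bracket), (\ref{eq:n3p2vvC}) and (\ref{eq:n3p3vvC}) swap, the two halves of (\ref{eq:n3p4vvC}) and of (\ref{eq:n3p10vvC}) swap within themselves, (\ref{eq:n3p6vvC}) swaps with (\ref{eq:n3p9vvC}), (\ref{eq:n3p7vvC}) swaps with (\ref{eq:n3p8vvC}), and (\ref{eq:n3p5vvC}) and (\ref{eq:n3p11vC}) are individually stable. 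Under $S$: each commutator-type identity negates consistently on both sides, while each $q$-commutator-type identity has its two halves interchanged within the same numbered relation; for instance, the two equalities in (\ref{eq:n3p2vvC}) get swapped, and likewise for (\ref{eq:n3p3vvC}).

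The only real obstacle is careful tracking of signs and factor orderings, especially in verifying for $S$ that each $q$-commutator relation either pairs with itself or with a companion after reversal; this is precisely why the relations in Definition \ref{def:calU} were packaged with two equalities side-by-side in (\ref{eq:n3p2vvC}), (\ref{eq:n3p3vvC}). Once these inspections are completed for each family, the universal property furnishes the required algebra homomorphism $\sigma : \mathcal{U}^+_q \to \mathcal{U}^+_q$ and antiautomorphism $S : \mathcal{U}^+_q \to \mathcal{U}^{+,\mathrm{opp}}_q$, and the involutive behavior on generators yields the bijectivity.
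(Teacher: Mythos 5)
Your proposal is correct and is exactly the paper's argument: the paper's proof of this lemma is the one-line ``Use Definition \ref{def:calU},'' meaning precisely the relation-by-relation verification via the universal property that you carry out, and your bookkeeping of how the relations (\ref{eq:n3p1vvC})--(\ref{eq:n3p11vC}) permute under $\sigma$ and $S$ checks out. The involution argument for bijectivity is also the standard completion of that proof.
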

\begin{proof} Use
 Definition \ref{def:calU}.
\end{proof}

\noindent Next we introduce a grading for $\mathcal U^+_q$.
\begin{lemma}
\label{lem:calgrade}
The algebra $\mathcal U^+_q$ has an
$(\mathbb N \times\mathbb N)$-grading for which the
alternating generators are homogeneous, with degrees shown
below:
\bigskip

\centerline{
\begin{tabular}[t]{c|c}
{\rm alternating generator} & {\rm degree} 
   \\  \hline
   $ \mathcal W_{-k}$ & $(k+1,k)$
	 \\
   $ \mathcal W_{k+1}$ & $(k,k+1)$
          \\
   $ \mathcal G_{k}$ & $(k,k)$
          \\
   $ \mathcal {\tilde G}_{k}$ & $(k,k)$
	 \end{tabular}
              }
              \medskip

\end{lemma}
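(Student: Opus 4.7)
The plan is to consider the free algebra $\mathcal F$ on the alternating generators (\ref{eq:calWWGGn}), endow it with the $(\mathbb N \times \mathbb N)$-grading specified in the table of the lemma, and verify that each defining relation (\ref{eq:n3p1vvC})--(\ref{eq:n3p11vC}) is homogeneous. Once this is established, the two-sided ideal $I \subseteq \mathcal F$ generated by these relations is a homogeneous ideal, so the $(\mathbb N \times \mathbb N)$-grading of $\mathcal F$ descends to the quotient $\mathcal U^+_q = \mathcal F / I$, and the alternating generators inherit the stated degrees.

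The verification is a bookkeeping exercise; I would organize it relation by relation. For instance, in (\ref{eq:n3p1vvC}) the left-hand side has degree $(1,0) + (k,k+1) = (k+1,k+1)$, matching the degree $(k+1,k+1)$ of $\mathcal G_{k+1}$ and $\tilde{\mathcal G}_{k+1}$ on the right. In (\ref{eq:n3p2vvC}) the left-hand side lies in degree $(1,0) + (k+1,k+1) = (k+2,k+1)$, agreeing with the degree $(k+2,k+1)$ of $\mathcal W_{-k-1}$. Similarly (\ref{eq:n3p3vvC}) sits in degree $(k+1,k+2)$. The relations (\ref{eq:n3p4vvC})--(\ref{eq:n3p11vC}) involve two brackets whose degrees I check individually: both terms of (\ref{eq:n3p5vvC}) have degree $(k+\ell+1,k+\ell+1)$; both terms of (\ref{eq:n3p6vvC}), (\ref{eq:n3p7vvC}) have degree $(k+\ell+2,k+\ell+1)$; both terms of (\ref{eq:n3p8vvC}), (\ref{eq:n3p9vvC}) have degree $(k+\ell+1,k+\ell+2)$; the remaining relations (\ref{eq:n3p4vvC}), (\ref{eq:n3p10vvC}), (\ref{eq:n3p11vC}) obviously involve homogeneous brackets of matching degrees.

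There is no real obstacle here: the degree assignment in the lemma exactly mirrors that of Lemma \ref{lem:althom} for $U^+_q$, and the defining relations of $\mathcal U^+_q$ are literal copies of the relations in Lemmas \ref{lem:nrel1}, \ref{lem:nrel2}, which are known to hold in the graded algebra $U^+_q$. In fact, the existence of the homomorphism $\gamma: \mathcal U^+_q \to U^+_q$ of Lemma \ref{lem:gamma} together with Lemma \ref{lem:althom} already forces any relation among the alternating generators that survives in $U^+_q$ to be homogeneous; the only thing that requires the direct check above is that the defining relations themselves (before quotienting) are homogeneous in $\mathcal F$, which is a finite case analysis of twelve families of relations.
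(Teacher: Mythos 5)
Your proposal is correct and is essentially the paper's own argument made explicit: the paper's proof consists of the single observation that the defining relations are homogeneous for the stated degree assignment, and your free-algebra formulation together with the relation-by-relation degree check is exactly the standard way to justify that claim. (The closing remark about $\gamma$ and Lemma \ref{lem:althom} forcing homogeneity is dispensable and slightly misleading since $\gamma$ is not injective, but you correctly identify that the direct check of the twelve families of relations is what carries the proof.)
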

\begin{proof} The defining
relations for $\mathcal U^+_q$
are homogeneous with respect to the
above degree assignment.
\end{proof}

\section{The polynomial algebra $\mathbb F \lbrack z_1, z_2,\ldots\rbrack$}

\noindent Recall the algebra 
 $\mathbb F \lbrack z_1, z_2,\ldots\rbrack$ from
 Definition
\ref{def:polyalg}. 
In this section we obtain some results about
 $\mathbb F \lbrack z_1, z_2,\ldots\rbrack$
that will be used in later sections.

\begin{definition}
\rm
For $n \in \mathbb N$ define
\begin{align}
z^\vee_n = \sum_{k=0}^n z_k z_{n-k} q^{n-2k}.
\label{eq:zvee}
\end{align}
Note that $z^\vee_0=1$.
\end{definition}

\begin{example}\label{ex:zvee}
We have
\begin{align*}
z^\vee_1 &= (q+q^{-1})z_1,
\\
z^\vee_2 &= (q^2+q^{-2})z_2 + z^2_1,
\\
z^\vee_3 &= (q^3+q^{-3})z_3 + (q+q^{-1})z_1z_2.
\end{align*}
\end{example}

\begin{lemma}
\label{lem:zvpoly}
For $n\geq 1$ the element $z^\vee_n$ is a homogeneous polynomial
of total degree $n$ in $z_1, z_2, \ldots, z_n$,
where we view each $z_k$ as having degree $k$.
For this polynomial the coefficient of
$z_n$ is $q^n+q^{-n}$.
\end{lemma}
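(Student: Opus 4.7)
The plan is to extract the lemma directly from the defining formula $z^\vee_n = \sum_{k=0}^n z_k z_{n-k} q^{n-2k}$ by isolating the boundary terms of the sum from the interior terms.

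First I would separate off the $k=0$ and $k=n$ summands. Since $z_0 = 1$ by convention, these contribute $z_n q^n$ and $z_n q^{-n}$ respectively, giving a total linear contribution of $(q^n + q^{-n}) z_n$. The remaining terms have index $k$ in the range $1 \le k \le n-1$; for each such $k$, the summand is $q^{n-2k} z_k z_{n-k}$, a product of two generators with subscripts $k$ and $n-k$, both strictly between $0$ and $n$.

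Next I would verify homogeneity with respect to the grading in which $\deg z_k = k$. Each interior term $z_k z_{n-k}$ has total degree $k + (n-k) = n$, and the boundary term $z_n$ has degree $n$, so every monomial occurring in $z^\vee_n$ has total degree exactly $n$. Moreover, the interior summands involve only $z_1, \ldots, z_{n-1}$, so when combined with the boundary contribution the whole expression is a polynomial in $z_1, z_2, \ldots, z_n$.

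Finally, to identify the coefficient of the monomial $z_n$, I would note that the interior summands are quadratic in the $z_i$ (products of two generators with positive subscripts) and so contribute nothing to the coefficient of the linear monomial $z_n$. Hence the only contribution comes from the boundary terms, yielding the coefficient $q^n + q^{-n}$ as claimed. There is no genuine obstacle here — the entire argument is a matter of separating the endpoints of the summation from the interior and reading off degrees — so I would present it as a short one-paragraph computation in the actual proof.
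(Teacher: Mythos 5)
Your proof is correct and matches the paper's approach: the paper's entire proof is ``By (\ref{eq:zvee})'', and your separation of the $k=0$ and $k=n$ boundary terms (which give $(q^n+q^{-n})z_n$ since $z_0=1$) from the quadratic interior terms is exactly the intended unpacking of that one-line justification. Nothing further is needed.
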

\begin{proof} By
(\ref{eq:zvee}).
\end{proof}

\noindent For $n\geq 1$, we now seek to express
$z_n$ as a polynomial in $z^\vee_1, z^\vee_2,\ldots, z^\vee_n$.
Towards this goal, we first express $z_n$ as a polynomial
in $z^\vee_n$ and
 $z_1, z_2,\ldots, z_{n-1}$.

\begin{lemma} 
\label{lem:zzvimpl}
For $n\geq 1$,
\begin{align}
z_n = \frac{z^\vee_n - \sum_{k=1}^{n-1} z_k z_{n-k} q^{n-2k}}{q^n+q^{-n}}.
\label{eq:znsolve}
\end{align}
\end{lemma}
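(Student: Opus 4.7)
The plan is to simply isolate $z_n$ in the defining formula (\ref{eq:zvee}) for $z^\vee_n$. First I would split the sum $z^\vee_n = \sum_{k=0}^n z_k z_{n-k} q^{n-2k}$ into three pieces: the $k=0$ term, the $k=n$ term, and the interior range $1 \le k \le n-1$. Using the convention $z_0 = 1$, the $k=0$ term contributes $q^n z_n$ and the $k=n$ term contributes $q^{-n} z_n$, so together they give $(q^n + q^{-n}) z_n$. The remaining interior terms are exactly $\sum_{k=1}^{n-1} z_k z_{n-k} q^{n-2k}$.

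Next I would solve the resulting identity $z^\vee_n = (q^n + q^{-n}) z_n + \sum_{k=1}^{n-1} z_k z_{n-k} q^{n-2k}$ for $z_n$. This requires dividing by $q^n + q^{-n}$, which is legitimate because $q \in \mathbb F$ is nonzero and not a root of unity (as stipulated at the start of Section~2), so $q^{2n} \ne -1$ and hence $q^n + q^{-n} \ne 0$. The resulting expression matches (\ref{eq:znsolve}) exactly.

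There is no genuine obstacle here; the statement is essentially bookkeeping on the definition. The only point worth mentioning explicitly is the nonvanishing of $q^n + q^{-n}$, which justifies the division.
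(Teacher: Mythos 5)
Your proof is correct and follows the same route as the paper, which simply says ``Solve (\ref{eq:zvee}) for $z_n$''; you have spelled out the splitting of the $k=0$ and $k=n$ terms and the justification that $q^n+q^{-n}\neq 0$ (since $q^n+q^{-n}=0$ would force $q^{4n}=1$, contradicting that $q$ is not a root of unity), both of which are exactly the implicit content of the paper's one-line proof.
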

\begin{proof} Solve 
(\ref{eq:zvee}) for $z_n$.
\end{proof}

\noindent For $n\geq 1$, we use 
Lemma 
\ref{lem:zzvimpl} and induction on $n$ to express
$z_n$
as a polynomial in $z^\vee_1, z^\vee_2,\ldots, z^\vee_n$.

\begin{example}
\label{ex:znsolve}
We have
\begin{align*}
z_1 &= \frac{z^\vee_1}{q+q^{-1}},
\\
z_2 &= \frac{(q+q^{-1})^2 z^\vee_2-(z^\vee_1)^2}{(q+q^{-1})^2(q^2+q^{-2})},
\\
z_3 &= \frac{(q+q^{-1})^2 (q^2+q^{-2})z^\vee_3-(q+q^{-1})^2z^\vee_1 z^\vee_2+
(z^\vee_1)^3}{(q+q^{-1})^2(q^2+q^{-2})(q^3+q^{-3})}.
\end{align*}
\end{example}

\begin{lemma}
\label{lem:zpoly}
For $n\geq 1$ the element $z_n$ is a homogeneous polynomial
of total degree $n$ in $z^\vee_1, z^\vee_2, \ldots, z^\vee_n$,
where we view each $z^\vee_k$ as having degree $k$.
For this polynomial the coefficient of
$z^\vee_n$ is $(q^n+q^{-n})^{-1}$.
\end{lemma}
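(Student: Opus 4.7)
The plan is to proceed by strong induction on $n$, feeding the recursion of Lemma~\ref{lem:zzvimpl} into the inductive hypothesis.

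For the base case $n=1$, the recursion gives $z_1 = z^\vee_1/(q+q^{-1})$, which is visibly homogeneous of degree $1$ in the weighted grading, with coefficient $(q+q^{-1})^{-1}$ on $z^\vee_1$, matching the claim.

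For the inductive step, fix $n\geq 2$ and assume that for every $1\leq m <n$, the element $z_m$ is a homogeneous polynomial of weighted total degree $m$ in $z^\vee_1,\ldots,z^\vee_m$. Then for each $1\leq k \leq n-1$, the product $z_k z_{n-k}$ is a homogeneous polynomial of weighted total degree $k+(n-k)=n$ in $z^\vee_1,\ldots,z^\vee_{n-1}$. Substituting these expressions into
\begin{align*}
z_n = \frac{z^\vee_n - \sum_{k=1}^{n-1} z_k z_{n-k} q^{n-2k}}{q^n+q^{-n}}
\end{align*}
from Lemma~\ref{lem:zzvimpl}, the numerator is a sum of a degree-$n$ monomial $z^\vee_n$ and a homogeneous polynomial of weighted degree $n$ in $z^\vee_1,\ldots,z^\vee_{n-1}$. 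Since $(q^n+q^{-n})$ is a nonzero scalar (as $q$ is not a root of unity), dividing by it preserves homogeneity, so $z_n$ is homogeneous of weighted total degree $n$ in $z^\vee_1,\ldots,z^\vee_n$, as required.

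To identify the coefficient of $z^\vee_n$, observe that by the inductive hypothesis the polynomials expressing $z_k$ and $z_{n-k}$ for $1\leq k \leq n-1$ involve only $z^\vee_1,\ldots,z^\vee_{n-1}$, so none of the cross terms $z_k z_{n-k}$ contributes any $z^\vee_n$. Hence the only source of $z^\vee_n$ in the numerator is the explicit term $z^\vee_n$, and its coefficient in $z_n$ equals $(q^n+q^{-n})^{-1}$, completing the induction. There is no real obstacle here; the content of the lemma is entirely bookkeeping enabled by Lemma~\ref{lem:zzvimpl}, and the main point to verify carefully is that the weighted degree of $z_k z_{n-k}$ adds correctly under the inductive hypothesis.
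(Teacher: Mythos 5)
Your proof is correct and follows exactly the route the paper intends: the paper's own proof of this lemma is the one-line "By (\ref{eq:znsolve}) and induction on $n$," and your argument is simply that induction written out in full, with the base case, the weighted-degree bookkeeping for the products $z_k z_{n-k}$, and the observation that only the explicit $z^\vee_n$ term contributes to the coefficient $(q^n+q^{-n})^{-1}$. Nothing further is needed.
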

\begin{proof} By
(\ref{eq:znsolve}) and induction on $n$.
\end{proof}

\begin{corollary}
\label{cor:homzzv}
The elements $\lbrace z^\vee_n\rbrace_{n=1}^\infty$
are algebraically independent and generate 
$\mathbb F \lbrack z_1, z_2,\ldots\rbrack$.
\end{corollary}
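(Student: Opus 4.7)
The plan is to exhibit an explicit algebra isomorphism between two copies of the polynomial algebra, using the triangular relationship between the two sets of generators provided by Lemmas \ref{lem:zvpoly} and \ref{lem:zpoly}.

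First I would dispose of generation. Lemma \ref{lem:zpoly} says that for every $n \geq 1$, the indeterminate $z_n$ is a polynomial in $z^\vee_1, z^\vee_2, \ldots, z^\vee_n$. Since $\lbrace z_n\rbrace_{n=1}^\infty$ generate $\mathbb F\lbrack z_1, z_2, \ldots \rbrack$ by definition, it follows at once that $\lbrace z^\vee_n\rbrace_{n=1}^\infty$ also generate the algebra.

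For algebraic independence, I would introduce auxiliary indeterminates $\lbrace t_n\rbrace_{n=1}^\infty$ and the polynomial algebra $\mathbb F\lbrack t_1, t_2, \ldots \rbrack$. By the universal property of polynomial algebras there is an algebra homomorphism
\begin{align*}
\Phi: \mathbb F\lbrack t_1, t_2,\ldots\rbrack \to \mathbb F\lbrack z_1, z_2,\ldots\rbrack, \qquad t_n \mapsto z^\vee_n.
\end{align*}
Lemma \ref{lem:zpoly} supplies, for each $n\geq 1$, an explicit polynomial $p_n(t_1,\ldots,t_n) \in \mathbb F\lbrack t_1, t_2,\ldots\rbrack$ with the property that $p_n(z^\vee_1, \ldots, z^\vee_n) = z_n$. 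Thus there is a second algebra homomorphism
\begin{align*}
\Psi: \mathbb F\lbrack z_1, z_2,\ldots\rbrack \to \mathbb F\lbrack t_1, t_2,\ldots\rbrack, \qquad z_n \mapsto p_n(t_1,\ldots,t_n).
\end{align*}
I would then verify that $\Phi$ and $\Psi$ are mutually inverse. On the generators $z_n$ we have $\Phi\Psi(z_n) = \Phi(p_n(t_1,\ldots,t_n)) = p_n(z^\vee_1,\ldots,z^\vee_n) = z_n$ by construction. In the other direction, on the generators $t_n$ we have $\Psi\Phi(t_n) = \Psi(z^\vee_n)$, and using formula (\ref{eq:zvee}) this equals $\sum_{k=0}^n \Psi(z_k)\Psi(z_{n-k}) q^{n-2k} = \sum_{k=0}^n p_k p_{n-k} q^{n-2k}$, where we set $p_0 = 1$. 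This must equal $t_n$: indeed, applying the isomorphism-candidate $\Phi$ to both sides yields $z^\vee_n$ on both sides (the right side because $\Phi(p_k) = z_k$), and since $\Phi$ is already known to be surjective and the two expressions are polynomials in $t_1,\ldots,t_n$, the triangular structure forces equality.

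The only point that needs a little care is the last identification $\sum_{k=0}^n p_k p_{n-k} q^{n-2k} = t_n$ as polynomials in the $t$'s. The cleanest way is induction on $n$: assuming $\Psi\Phi$ fixes $t_1,\ldots,t_{n-1}$, the displayed sum equals $t_n (q^n+q^{-n})/(q^n+q^{-n}) + (\text{lower terms already fixed}) = t_n$, where the coefficient $q^n+q^{-n}$ is extracted from the leading term of $p_n$ identified in Lemma \ref{lem:zpoly} and matches the leading coefficient of $z_n$ in $z^\vee_n$ from Lemma \ref{lem:zvpoly}. Once $\Phi$ is an isomorphism, algebraic independence of $\lbrace z^\vee_n\rbrace_{n=1}^\infty$ is immediate, since the $t_n$ on the domain side are algebraically independent by construction.
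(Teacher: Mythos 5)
Your proof is correct. The generation half is exactly the paper's argument: Lemma \ref{lem:zpoly} expresses each $z_n$ as a polynomial in $z^\vee_1,\ldots,z^\vee_n$, so the $z^\vee_n$ generate. For algebraic independence the paper simply cites Lemma \ref{lem:zvpoly} together with the independence of the $z_n$ (i.e.\ a nonzero polynomial relation among the $z^\vee_n$ would, after substituting (\ref{eq:zvee}), yield a nonzero relation among the $z_n$, the triangular form with leading coefficient $q^n+q^{-n}\neq 0$ guaranteeing nonvanishing), whereas you instead build explicit mutually inverse homomorphisms $\Phi$, $\Psi$ and read off injectivity of $\Phi$. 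This is essentially a repackaging: you are proving Corollary \ref{cor:autzz} first and deducing Corollary \ref{cor:homzzv} from it, while the paper goes the other way and treats \ref{cor:autzz} as a reformulation. Your route has the small advantage of not needing to argue that a substituted nonzero polynomial stays nonzero. One caution: your intermediate claim that applying $\Phi$ to both sides of $\sum_{k=0}^n p_k p_{n-k}q^{n-2k}=t_n$ ``forces equality'' is circular as stated, since it presupposes the injectivity you are trying to prove; but this does not matter, because the identity is immediate from the recursion (\ref{eq:znsolve}) defining $p_n$ (rearrange $p_n=(t_n-\sum_{k=1}^{n-1}p_kp_{n-k}q^{n-2k})/(q^n+q^{-n})$ and absorb the $k=0,n$ terms), exactly as your closing induction observes. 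So there is no genuine gap.
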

\begin{proof} 
The first assertion follows from
Lemma
\ref{lem:zvpoly} and since
$\lbrace z_n\rbrace_{n=1}^\infty$
are algebraically independent.
The second assertion follows from Lemma
\ref{lem:zpoly} and since
$\lbrace z_n\rbrace_{n=1}^\infty$ generate
$\mathbb F \lbrack z_1, z_2,\ldots\rbrack$.
\end{proof}

\begin{corollary}
\label{cor:autzz}
There exists an automorphism of the algebra
$\mathbb F \lbrack z_1, z_2,\ldots\rbrack$
that sends $z_n \mapsto z^\vee_n$ for $n\geq 1$.
\end{corollary}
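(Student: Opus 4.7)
The plan is to use the universal property of the polynomial algebra $\mathbb F\lbrack z_1, z_2,\ldots\rbrack$, for which $\lbrace z_n\rbrace_{n=1}^\infty$ is a free commutative generating set. Since the target elements $\lbrace z^\vee_n\rbrace_{n=1}^\infty$ lie in a commutative algebra, there exists a (unique) algebra homomorphism $\psi: \mathbb F\lbrack z_1, z_2,\ldots\rbrack \to \mathbb F\lbrack z_1, z_2,\ldots\rbrack$ sending $z_n \mapsto z^\vee_n$ for $n\geq 1$. This takes care of existence of the map.

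Next I would check that $\psi$ is an automorphism by invoking Corollary \ref{cor:homzzv}. Surjectivity of $\psi$ is immediate: the image contains every $z^\vee_n$, and by the second assertion of Corollary \ref{cor:homzzv} these elements generate $\mathbb F\lbrack z_1, z_2,\ldots\rbrack$. For injectivity, suppose $f \in \mathbb F\lbrack z_1, z_2,\ldots\rbrack$ satisfies $\psi(f)=0$. Writing $f$ as a polynomial in the $z_n$, applying $\psi$ produces the same polynomial expression evaluated on the $z^\vee_n$. The first assertion of Corollary \ref{cor:homzzv} states that $\lbrace z^\vee_n\rbrace_{n=1}^\infty$ are algebraically independent, so this expression can be zero only if $f=0$.

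There is no real obstacle here; the content of the corollary has already been absorbed into Corollary \ref{cor:homzzv}, and the present statement is essentially a reformulation of the fact that any two free generating sets of a polynomial algebra are interchangeable by an automorphism. The only thing to be mildly careful about is that $\mathbb F\lbrack z_1,z_2,\ldots\rbrack$ has infinitely many generators, but the universal property applies verbatim in this setting, so no extra argument is needed.
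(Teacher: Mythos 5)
Your proposal is correct and follows the same route as the paper, which simply declares the corollary to be a reformulation of Corollary \ref{cor:homzzv}; you have merely spelled out the standard details (universal property for existence, generation for surjectivity, algebraic independence for injectivity) that the paper leaves implicit.
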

\begin{proof} This is a reformulation of Corollary
\ref{cor:homzzv}.
\end{proof}

\section{The map $\varphi$ is an isomorphism}

\noindent Recall the map $\varphi$ from
Lemma
\ref{lem:varphi}.
In this section we show that $\varphi$
is an isomorphism.
\medskip

\noindent The following definition is motivated by
(\ref{eq:nGGWW1}).

\begin{definition} 
\label{def:Zvee}
\rm For $n \geq 1 $ define
\begin{align}
 Z^\vee_n= \sum_{k=0}^n \mathcal G_k \mathcal {\tilde G}_{n-k} q^{n-2k}
-  q \sum_{k=0}^{n-1} \mathcal W_{-k} \mathcal W_{n-k} q^{n-1-2k}.
\label{eq:ZnVee}
\end{align}
For notational convenience define $Z^\vee_0=1$.
\end{definition}

\noindent For any algebra $\mathcal A$, an element in
$\mathcal A$ is {\it central} whenever it commutes with
every element of $\mathcal A$. 
\begin{lemma}
\label{lem:Zcent}
\rm
For $n \in \mathbb N$ the element
$Z^\vee_n$ is central in $\mathcal U^+_q$.
\end{lemma}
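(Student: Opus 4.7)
My plan is to show $Z^\vee_n$ commutes with each alternating generator of $\mathcal{U}^+_q$ by direct computation using the defining relations of Definition \ref{def:calU}, together with their consequences in Lemma \ref{calUrel3}. Write $Z^\vee_n = A_n - qB_n$ with
$$A_n = \sum_{k=0}^n \mathcal{G}_k\mathcal{\tilde G}_{n-k}q^{n-2k}, \qquad B_n = \sum_{k=0}^{n-1}\mathcal{W}_{-k}\mathcal{W}_{n-k}q^{n-1-2k},$$
so centrality reduces to showing $[A_n, X] = q[B_n, X]$ for each generator $X$. Note we cannot use an analogue of Lemma \ref{prop:nGGWW} — that identity fails in $\mathcal{U}^+_q$, and $Z^\vee_n$ is precisely what measures its failure — so everything must proceed from the relations (\ref{eq:n3p1vvC})--(\ref{eq:n3p11vC}).

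The base case $X = \mathcal{W}_0 = \mathcal{W}_{-0}$ goes as follows. By (\ref{eq:n3p4vvC}) the left factors $\mathcal{W}_{-k}$ in $B_n$ commute with $\mathcal{W}_0$, and (\ref{eq:n3p1vvC}) evaluates the remaining commutators $[\mathcal{W}_{n-k},\mathcal{W}_0]$, giving $q[B_n,\mathcal{W}_0]$ as an explicit sum in $\mathcal{W}_{-k}(\mathcal{\tilde G}_{n-k} - \mathcal{G}_{n-k})$. For $[A_n,\mathcal{W}_0]$ one applies the Leibniz rule and unfolds each $[\mathcal{G}_k,\mathcal{W}_0]$ and $[\mathcal{\tilde G}_{n-k},\mathcal{W}_0]$ from the $q$-commutators of (\ref{eq:n3p2vvC}) into combinations of $\mathcal{G}_k\mathcal{W}_0$, $\mathcal{W}_0\mathcal{\tilde G}_{n-k}$ and $\mathcal{W}_{-}$-correction terms. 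After reindexing, one uses (\ref{eq:calngg1}) to identify the relevant $\mathcal{G}\mathcal{W}_{-}$ pairs and verifies termwise cancellation (I verified the $n=1$ case by hand to confirm the pattern). The case $X = \mathcal{W}_1$ is handled symmetrically using (\ref{eq:n3p3vvC}) and (\ref{eq:calngg2}) in place of (\ref{eq:n3p2vvC}) and (\ref{eq:calngg1}).

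For $X = \mathcal{G}_{m+1}$ and $X = \mathcal{\tilde G}_{m+1}$, the commutator $[A_n, X]$ is computed using (\ref{eq:n3p10vvC}), (\ref{eq:n3p11vC}), while $[B_n, X]$ uses (\ref{eq:n3p6vvC})--(\ref{eq:n3p9vvC}). The key tool here is Lemma \ref{calUrel3}: identities (\ref{eq:calngg3}) and (\ref{eq:calngg5}) were seemingly designed to equate a difference of $[\mathcal{G},\mathcal{\tilde G}]$-commutators with a difference of $[\mathcal{W}_{-},\mathcal{W}_+]_q$-commutators, which is exactly the structure of $[A_n,X] - q[B_n,X]$. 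The remaining cases $X = \mathcal{W}_{-m}, \mathcal{W}_{m+1}$ with $m\geq 1$ are treated similarly using (\ref{eq:n3p4vvC})--(\ref{eq:n3p7vvC}).

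The main obstacle is the combinatorial bookkeeping — each commutator yields a long weighted sum over shifted indices, and all terms must be paired off — and I expect the fully explicit computations, especially for the $\mathcal{G}$- and $\mathcal{\tilde G}$-generator cases, to be relegated to the appendix announced as Section 13. As a consistency check I would also compute $\varphi(Z^\vee_n)$: expanding via Lemma \ref{lem:varphi} and collapsing the $U^+_q$-factors by Lemma \ref{prop:nGGWW} yields $\varphi(Z^\vee_n) = 1 \otimes z^\vee_n$, which is manifestly central in $U^+_q\otimes \mathbb F\lbrack z_1,z_2,\ldots\rbrack$. This calculation cannot yet replace the direct argument, since deducing centrality from it requires injectivity of $\varphi$ (the main goal of Section 5), but it confirms the statement and clarifies why the elements $Z^\vee_n$ are packaged as they are.
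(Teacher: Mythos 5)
Your strategy---verify $[Z^\vee_n,X]=0$ for each generator $X$ directly from the relations of Definition \ref{def:calU}---is sound and is, at bottom, the same computation the paper performs, but the paper organizes it quite differently, and the difference is exactly where your proposal stops short. The paper's Appendix packages each family of generators into a generating function ($\mathcal G(t)$, $\mathcal{\tilde G}(t)$, $\mathcal W^{\pm}(t)$), rewrites the defining relations and the relations of Lemma \ref{calUrel3} in that language, and then \emph{solves} suitable pairs of them as linear systems to obtain normal-ordering ``reduction rules'' (Lemmas \ref{lem:redrel1}, \ref{lem:redrel2}). The whole family $\{Z^\vee_n\}$ is then encoded at once as $Z^\vee(t)=\mathcal G(q^{-1}t)\,\mathcal{\tilde G}(qt)-qt\,\mathcal W^-(q^{-1}t)\,\mathcal W^+(qt)$, and the reduction rules collapse the commutators mechanically; this is the device that replaces your ``termwise cancellation after reindexing,'' which you have only confirmed at $n=1$ and which is the actual content of the lemma. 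The paper also economizes on cases: by Lemma \ref{lem:gf1}, $\mathcal W^{\pm}(t)$ and $\mathcal{\tilde G}(t)$ are $q$-commutators of $\mathcal G(t)$ with $\mathcal W_0$, $\mathcal W_1$, so after checking commutation with $\mathcal W_0$, $\mathcal W_1$ and $\mathcal G(s)$ the remaining three families come for free---your plan treats all six separately. Two things you get right that are worth keeping: the observation that Lemma \ref{prop:nGGWW} has no analogue in $\mathcal U^+_q$ (indeed $Z^\vee_n$ measures its failure), and that the $\varphi$-based check cannot substitute for the direct argument without the injectivity established later. As written, though, the general-$n$ cancellation is asserted rather than proved, so you should either adopt the generating-function normal-ordering machinery or supply the explicit index bookkeeping for arbitrary $n$.
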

\noindent The proof of Lemma 
\ref{lem:Zcent} is slightly technical, and contained in the
 Appendix.

\begin{note}\rm
The central elements 
(\ref{eq:ZnVee}) resemble the central elements for $\mathcal A_q$
given in 
\cite[Lemma~2.1]{basBel}.
\end{note}

\begin{lemma}
\label{lem:varphiZvee}
For $n \in \mathbb N$ the map $\varphi$ sends
$Z^\vee_n \mapsto 1 \otimes z^\vee_n$.
\end{lemma}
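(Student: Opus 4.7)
The plan is to apply $\varphi$ to the defining expression (\ref{eq:ZnVee}) for $Z^\vee_n$, expand using Lemma \ref{lem:varphi}, and reorganize the result as a double sum indexed by the ``polynomial tail'' $z_c z_d$. The key observation will be that the coefficient of each such $z_c z_d$ is, up to the global factor $q^{d-c}$, the expression appearing in (\ref{eq:nGGWW1}) with $n$ replaced by $s := n-c-d$. By Lemma \ref{prop:nGGWW} this expression vanishes for all $s \ge 1$, leaving only the $s=0$ contribution, which will evaluate to $1 \otimes z^\vee_n$.

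Concretely, for $n \ge 1$ I would expand $\varphi(\mathcal{G}_k \mathcal{\tilde G}_{n-k})$ as $\sum_{a+c=k}\sum_{b+d=n-k} G_a \tilde G_b \otimes z_c z_d$ and use $a+b+c+d=n$ to rewrite $q^{n-2k}$ as $q^{b-a}q^{d-c}$. An analogous computation applied to $\varphi(\mathcal{W}_{-k}\mathcal{W}_{n-k})$, with $\ell \ge 1$ indexing the positive alternating element via $\ell = n-k-d$, absorbs the leading $q$ in (\ref{eq:ZnVee}) and converts $q\cdot q^{n-1-2k}$ into $q^{\ell-a}q^{d-c}$. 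After grouping by $(c,d)$ and setting $s = n-c-d$, the coefficient of $1 \otimes q^{d-c} z_c z_d$ in $\varphi(Z^\vee_n)$ is
\[
\sum_{a+b=s} q^{b-a}\,G_a \tilde G_b \;-\; \sum_{\substack{a+\ell=s\\ \ell \ge 1}} q^{\ell-a}\,W_{-a} W_\ell,
\]
which is precisely (\ref{eq:nGGWW1}) rewritten at level $s$, hence is zero for $s \ge 1$ by Lemma \ref{prop:nGGWW}.

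The only surviving terms are those with $s=0$, i.e.\ $c+d=n$; these force $a=b=0$, so only $G_0 \tilde G_0 = 1$ contributes, and the total collapses to $\sum_{c+d=n} q^{d-c}(1 \otimes z_c z_d) = 1 \otimes \sum_{c=0}^n q^{n-2c} z_c z_{n-c} = 1 \otimes z^\vee_n$. The $n=0$ case is immediate from the conventions $Z^\vee_0 = z^\vee_0 = 1$. The main technical obstacle is simply keeping the multi-index bookkeeping straight: once the change of variables turns $q^{n-2k}$ into $q^{(b-a)+(d-c)}$, Lemma \ref{prop:nGGWW} does the algebraic work.
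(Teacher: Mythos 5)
Your proposal is correct and follows exactly the route the paper takes: expand $\varphi(Z^\vee_n)$ via Lemma \ref{lem:varphi} and Definition \ref{def:Zvee}, then evaluate using (\ref{eq:nGGWW1}) and (\ref{eq:zvee}). The paper leaves the bookkeeping implicit; your change of variables $q^{n-2k}=q^{b-a}q^{d-c}$ and the identification of the level-$s$ coefficient with the two sides of (\ref{eq:nGGWW1}) is precisely the intended verification.
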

\begin{proof} Expand
$\varphi(Z^\vee_n)$ using
Lemma
\ref{lem:varphi} and
Definition
\ref{def:Zvee}. Evaluate the result using
(\ref{eq:nGGWW1}) and
(\ref{eq:zvee}).
\end{proof}

\begin{definition}
\label{def:calZ}
Let $\mathcal Z$ denote the subalgebra of
$\mathcal U^+_q$ generated by
$\lbrace Z^\vee_n\rbrace_{n=1}^\infty$.
\end{definition}

\noindent  For an algebra
$\mathcal A$, its central elements form a subalgebra called the
{\it center} of
$\mathcal A$.
By 
Lemma
\ref{lem:Zcent}
the subalgebra 
$\mathcal Z$ is contained in 
the center of $\mathcal U^+_q$.
In Section 6 we show that
$\mathcal Z$ is equal to
the center of $\mathcal U^+_q$.
\medskip

\noindent Next we introduce some elements
$\lbrace Z_n \rbrace_{n \in \mathbb N}$
in $\mathcal Z$, that are related to 
$\lbrace Z^\vee_n \rbrace_{n \in \mathbb N}$
in the same way that
 $\lbrace z_n \rbrace_{n \in \mathbb N}$
are related to 
$\lbrace z^\vee_n \rbrace_{n \in \mathbb N}$.
The elements 
$\lbrace Z_n \rbrace_{n \in \mathbb N}$
are defined recursively.

\begin{definition}
\label{def:Zn}
\rm
Define 
$Z_0=1$  and for $n\geq 1$,
\begin{align}
Z_n = \frac{Z^\vee_n - \sum_{k=1}^{n-1} Z_k Z_{n-k} q^{n-2k}}{q^n+q^{-n}}.
\label{eq:Znsol}
\end{align}
\end{definition}

\begin{lemma}
\label{lem:ZvZZ}
For  $n \in \mathbb N$,
\begin{align}
Z^\vee_n = \sum_{k=0}^n Z_k Z_{n-k} q^{n-2k}.
\label{eq:ZvZZ}
\end{align}
\end{lemma}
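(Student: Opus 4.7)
The plan is to verify the identity directly by unwinding the recursive definition of $Z_n$. The $n=0$ case is immediate since both sides equal $1$ (the sum on the right has the single term $Z_0 Z_0 q^0 = 1$). For $n \geq 1$, I would split off the boundary terms of the sum $\sum_{k=0}^n Z_k Z_{n-k} q^{n-2k}$: the $k=0$ and $k=n$ summands, using $Z_0=1$, contribute $Z_n q^n + Z_n q^{-n} = (q^n+q^{-n})Z_n$, while the remaining summands give exactly $\sum_{k=1}^{n-1} Z_k Z_{n-k} q^{n-2k}$.

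Then I would rearrange the recursion (\ref{eq:Znsol}) into the form
\begin{align*}
(q^n+q^{-n})Z_n + \sum_{k=1}^{n-1} Z_k Z_{n-k} q^{n-2k} = Z^\vee_n,
\end{align*}
and observe that the left side is precisely the decomposition obtained in the previous paragraph. This yields (\ref{eq:ZvZZ}).

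There is essentially no obstacle; the statement is a purely formal rearrangement of the defining recursion. The only thing worth remarking is that the manipulation involves products of the $Z_k$'s in a fixed order, which is unambiguous regardless, but in fact the $Z_k$ all lie in $\mathcal{Z}$ and are central by Lemma \ref{lem:Zcent}, so commutativity is automatic. No induction is needed, since for each fixed $n$ the identity (\ref{eq:ZvZZ}) reduces directly to the one-line recursion (\ref{eq:Znsol}).
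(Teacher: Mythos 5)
Your proof is correct and is exactly the paper's argument: the paper's one-line proof ("Solve (\ref{eq:Znsol}) for $Z^\vee_n$") is precisely the rearrangement you carry out, with the $k=0$ and $k=n$ boundary terms supplying the factor $q^n+q^{-n}$. The extra remarks on the $n=0$ case and on the ordering of the $Z_k$'s are harmless elaborations of the same computation.
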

\begin{proof} 
Solve 
(\ref{eq:Znsol}) for $Z^\vee_n$.
\end{proof}

\begin{lemma} 
\label{lem:ZcZ}
The subalgebra
 $\mathcal Z$ from
Definition
\ref{def:calZ} is generated by 
$\lbrace Z_n \rbrace_{n=1}^\infty$.
\end{lemma}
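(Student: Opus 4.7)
The plan is to show both inclusions between the subalgebra generated by $\lbrace Z_n\rbrace_{n=1}^\infty$ and the subalgebra $\mathcal Z$ generated by $\lbrace Z^\vee_n\rbrace_{n=1}^\infty$. This is essentially the content of Corollary \ref{cor:homzzv} transported from the polynomial algebra $\mathbb F\lbrack z_1, z_2,\ldots\rbrack$ to $\mathcal U^+_q$ via the parallel recursions.

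For one inclusion, I would apply Lemma \ref{lem:ZvZZ}: the identity $Z^\vee_n = \sum_{k=0}^n Z_k Z_{n-k} q^{n-2k}$ expresses $Z^\vee_n$ as a polynomial in $Z_1,\dots,Z_n$ (using $Z_0=1$). Consequently every generator $Z^\vee_n$ of $\mathcal Z$ lies in the subalgebra generated by $\lbrace Z_n\rbrace_{n=1}^\infty$, and hence $\mathcal Z$ is contained in that subalgebra.

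For the reverse inclusion, I would proceed by induction on $n$ to show that each $Z_n$ lies in $\mathcal Z$. The base case $Z_0=1\in\mathcal Z$ is immediate. For the inductive step, inspect Definition \ref{def:Zn}: the right-hand side of (\ref{eq:Znsol}) is built from $Z^\vee_n$ (which is a generator of $\mathcal Z$) together with products $Z_k Z_{n-k}$ for $1 \le k \le n-1$, all of which lie in $\mathcal Z$ by induction. The denominator $q^n+q^{-n}$ is nonzero because $q$ is not a root of unity, so dividing remains within $\mathcal Z$. Therefore $Z_n \in \mathcal Z$, and consequently the subalgebra generated by $\lbrace Z_n\rbrace_{n=1}^\infty$ is contained in $\mathcal Z$.

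There is no real obstacle here: the argument is a direct transcription of the polynomial identity developed in Section 4, now read inside $\mathcal U^+_q$. The only point that deserves explicit mention is the invertibility of $q^n+q^{-n}$, which is what makes the triangular recursion (\ref{eq:Znsol}) invertible and thus allows one to freely switch between the two generating sets.
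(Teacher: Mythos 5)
Your proof is correct and follows the same route as the paper, which simply cites Definition \ref{def:calZ} and Lemma \ref{lem:ZvZZ}; you have just made explicit the two inclusions (the recursion (\ref{eq:Znsol}) placing each $Z_n$ in $\mathcal Z$, and the identity (\ref{eq:ZvZZ}) giving the reverse containment) that the paper leaves implicit. The remark about the nonvanishing of $q^n+q^{-n}$ is a reasonable point to record, though it is already guaranteed by the standing hypothesis that $q$ is not a root of unity.
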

\begin{proof} By Definition
\ref{def:calZ}
and Lemma
\ref{lem:ZvZZ}.
\end{proof}

\begin{lemma}
\label{lem:etaZ}
The map $\varphi$ sends
$Z_n \mapsto 1\otimes z_n$ for $n \in \mathbb N$.
\end{lemma}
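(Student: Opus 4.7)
The plan is to proceed by induction on $n$, using the recursive definition of $Z_n$ in Definition \ref{def:Zn} together with the parallel recursive formula for $z_n$ in Lemma \ref{lem:zzvimpl} and the already-established fact (Lemma \ref{lem:varphiZvee}) that $\varphi(Z^\vee_n) = 1 \otimes z^\vee_n$.

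For the base case $n=0$, we have $Z_0 = 1$ by Definition \ref{def:Zn} and $z_0 = 1$ by convention, so $\varphi(Z_0) = \varphi(1) = 1 = 1 \otimes 1 = 1 \otimes z_0$. For the inductive step, fix $n \geq 1$ and assume $\varphi(Z_k) = 1 \otimes z_k$ for every integer $k$ with $0 \leq k < n$. Applying the algebra homomorphism $\varphi$ to the recursive formula (\ref{eq:Znsol}) and using Lemma \ref{lem:varphiZvee} together with the inductive hypothesis gives
\begin{align*}
\varphi(Z_n) &= \frac{\varphi(Z^\vee_n) - \sum_{k=1}^{n-1} \varphi(Z_k)\varphi(Z_{n-k}) q^{n-2k}}{q^n + q^{-n}} \\
&= \frac{1\otimes z^\vee_n - \sum_{k=1}^{n-1} (1\otimes z_k)(1\otimes z_{n-k}) q^{n-2k}}{q^n + q^{-n}} \\
&= 1 \otimes \frac{z^\vee_n - \sum_{k=1}^{n-1} z_k z_{n-k} q^{n-2k}}{q^n + q^{-n}}.
\end{align*}
By Lemma \ref{lem:zzvimpl}, the right-hand expression inside the tensor equals $z_n$, completing the induction.

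There is essentially no obstacle here: the recursion defining $Z_n$ in $\mathcal{U}^+_q$ mirrors exactly the recursion expressing $z_n$ in terms of $z^\vee_n$ and lower $z_k$'s, so once Lemma \ref{lem:varphiZvee} is available, the identity $\varphi(Z_n) = 1 \otimes z_n$ propagates automatically. The only point requiring any care is that the denominator $q^n + q^{-n}$ is nonzero, which holds because $q$ is not a root of unity.
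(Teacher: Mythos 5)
Your proof is correct and is essentially identical to the paper's: both proceed by induction on $n$, applying $\varphi$ to the recursion (\ref{eq:Znsol}), invoking Lemma \ref{lem:varphiZvee} and the inductive hypothesis, and matching the result against (\ref{eq:znsolve}). No issues.
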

\begin{proof} We use induction on $n$. The result holds for
$n=0$, since $Z_0=1$ and $z_0=1$.
Next assume $n\geq 1$. Using
Lemma
\ref{lem:varphiZvee}
and 
(\ref{eq:znsolve}),
(\ref{eq:Znsol})
 along with induction,
\begin{align*}
\varphi(Z_n) &= \frac{\varphi(Z^\vee_n) -
\sum_{k=1}^{n-1} \varphi(Z_k)\varphi(Z_{n-k}) q^{n-2k}}{q^n+q^{-n}}
\\
 &= \frac{1\otimes z^\vee_n -
\sum_{k=1}^{n-1}(1\otimes z_k)(1\otimes z_{n-k}) q^{n-2k}}{q^n+q^{-n}}
\\
 &= 1 \otimes \frac{z^\vee_n -
\sum_{k=1}^{n-1}z_k z_{n-k} q^{n-2k}}{q^n+q^{-n}}
\\
& =  1\otimes z_n.
\end{align*}
\end{proof}

\noindent We have a comment.
\begin{lemma}
\label{cor:19A}
The map $\varphi$ sends $\mathcal Z$ onto $\mathbb F \otimes
\mathbb F \lbrack z_1,z_2,\ldots \rbrack$.
\end{lemma}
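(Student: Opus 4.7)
The plan is to combine Lemma \ref{lem:ZcZ} and Lemma \ref{lem:etaZ} with the observation that $\lbrace z_n\rbrace_{n=1}^{\infty}$ generates the polynomial algebra $\mathbb F \lbrack z_1, z_2,\ldots \rbrack$. The argument should be essentially a chase through generators, with no hidden technicalities.

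First, I would invoke Lemma \ref{lem:ZcZ} to rewrite the image $\varphi(\mathcal Z)$ as the subalgebra of $U^+_q \otimes \mathbb F \lbrack z_1, z_2,\ldots \rbrack$ generated by $\lbrace \varphi(Z_n)\rbrace_{n=1}^{\infty}$, since $\varphi$ is an algebra homomorphism and $\mathcal Z$ is generated by $\lbrace Z_n\rbrace_{n=1}^{\infty}$. Next, I would apply Lemma \ref{lem:etaZ} to replace each $\varphi(Z_n)$ by $1 \otimes z_n$. Hence $\varphi(\mathcal Z)$ equals the subalgebra of $U^+_q \otimes \mathbb F \lbrack z_1, z_2,\ldots \rbrack$ generated by $\lbrace 1 \otimes z_n\rbrace_{n=1}^{\infty}$.

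To finish, I would note that this last subalgebra is precisely $\mathbb F \otimes \mathbb F \lbrack z_1, z_2,\ldots \rbrack$: the containment $\subseteq$ is clear since $1\otimes z_n \in \mathbb F \otimes \mathbb F \lbrack z_1, z_2,\ldots \rbrack$ for every $n\geq 1$ and $\mathbb F \otimes \mathbb F \lbrack z_1, z_2,\ldots \rbrack$ is a subalgebra, while the reverse containment follows from the fact that $\lbrace z_n\rbrace_{n=1}^{\infty}$ generate $\mathbb F \lbrack z_1, z_2,\ldots \rbrack$ as an algebra (with $z_0=1$ already accounted for by the multiplicative identity $1\otimes 1$). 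There is no real obstacle here; the entire lemma is a bookkeeping consequence of the previously established results, and the only thing to watch out for is the harmless identification $\mathbb F \otimes \mathbb F \lbrack z_1, z_2,\ldots \rbrack \cong \mathbb F \lbrack z_1, z_2,\ldots \rbrack$ implicit in how the target of $\varphi$ is described.
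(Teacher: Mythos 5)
Your proposal is correct and follows exactly the paper's route: the paper proves this lemma by citing Lemmas \ref{lem:ZcZ} and \ref{lem:etaZ}, which are precisely the two facts you combine. You have merely spelled out the routine bookkeeping (that the subalgebra generated by $\lbrace 1\otimes z_n\rbrace_{n=1}^{\infty}$ is $\mathbb F \otimes \mathbb F\lbrack z_1,z_2,\ldots\rbrack$) that the paper leaves implicit.
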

\begin{proof} By Lemmas
\ref{lem:ZcZ},
\ref{lem:etaZ}.
\end{proof}

\noindent Next we show that the algebra $\mathcal U^+_q$
is generated by $\mathcal W_0, \mathcal W_1, \mathcal Z$.

\begin{lemma}
\label{lem:nrecgenCal}
Using the equations below, the alternating
generators of
$\mathcal U^+_q$
are recursively obtained from $\mathcal W_0, \mathcal W_1,
Z^\vee_1, Z^\vee_2, \ldots$ in the following order:
\begin{align*}
\mathcal W_0, \quad \mathcal W_1, \quad \mathcal G_1, 
\quad \mathcal {\tilde G}_1, \quad \mathcal  W_{-1}, 
\quad \mathcal W_2, \quad
\mathcal G_2, \quad \mathcal {\tilde G}_2, \quad 
\mathcal W_{-2}, \quad \mathcal W_3, \quad \ldots
\end{align*}
For $n\geq 1$,
\begin{align}
\mathcal G_n &= \frac{Z^\vee_n + q\sum_{k=0}^{n-1} 
\mathcal W_{-k} \mathcal W_{n-k} q^{n-1-2k}
-
\sum_{k=1}^{n-1}\mathcal G_k  \mathcal{\tilde G}_{n-k} q^{n-2k}}{q^n+q^{-n}}
+ 
\frac{\mathcal W_n \mathcal W_0-\mathcal W_0 \mathcal W_n}{(1+q^{-2n})(1-q^{-2})},
\label{eq:nsolvGCal}
\\
\mathcal {\tilde G}_n &= \mathcal G_n +
\frac{\mathcal W_0 \mathcal W_n-\mathcal W_n \mathcal W_0}{1-q^{-2}},
\label{eq:nsolvGtCal}
\\
\mathcal W_{-n} &= \frac{q  \mathcal W_0 \mathcal G_n
- q^{-1}\mathcal G_n \mathcal W_0}{q-q^{-1}},
\label{eq:nsolvWmCal}
\\
\mathcal W_{n+1} &= \frac{q \mathcal G_n \mathcal W_1 -
q^{-1} \mathcal W_1 \mathcal G_n}{q-q^{-1}}.
\label{eq:nsolvWpCal}
\end{align}
\end{lemma}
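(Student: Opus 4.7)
The plan is to verify each of the four displayed identities directly from the defining relations of $\mathcal U^+_q$ and the definition of $Z^\vee_n$, after which the recursive description follows by a routine induction on $n$ in the stated order.

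First I would handle the three easy identities. Equation (\ref{eq:nsolvGtCal}) is obtained by setting $k+1 = n$ in the first relation of (\ref{eq:n3p1vvC}), which gives $\mathcal W_0\mathcal W_n - \mathcal W_n\mathcal W_0 = (1-q^{-2})(\tilde{\mathcal G}_n - \mathcal G_n)$, and then solving for $\tilde{\mathcal G}_n$. Equation (\ref{eq:nsolvWmCal}) is obtained by setting $k+1 = n$ in the first relation of (\ref{eq:n3p2vvC}), which reads $q\mathcal W_0 \mathcal G_n - q^{-1}\mathcal G_n \mathcal W_0 = (q-q^{-1})\mathcal W_{-n}$. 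Equation (\ref{eq:nsolvWpCal}) is obtained similarly from the first relation of (\ref{eq:n3p3vvC}).

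The main step is to derive (\ref{eq:nsolvGCal}). Starting from Definition \ref{def:Zvee}, I would separate out the two endpoint terms $k=0$ and $k=n$ of the $\mathcal G \tilde{\mathcal G}$-sum, giving
\begin{align*}
Z^\vee_n \;=\; q^{-n}\mathcal G_n \,+\, q^n \tilde{\mathcal G}_n \,+\, \sum_{k=1}^{n-1}\mathcal G_k \tilde{\mathcal G}_{n-k}q^{n-2k} \,-\, q\sum_{k=0}^{n-1}\mathcal W_{-k}\mathcal W_{n-k}q^{n-1-2k}.
\end{align*}
Next I would substitute the already-established (\ref{eq:nsolvGtCal}) into the term $q^n \tilde{\mathcal G}_n$, collect the coefficient $q^n+q^{-n}$ on $\mathcal G_n$, and solve. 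The remaining $\mathcal W_0\mathcal W_n$-term carries coefficient $\frac{q^n}{(q^n+q^{-n})(1-q^{-2})} = \frac{1}{(1+q^{-2n})(1-q^{-2})}$, which is precisely the denominator appearing in (\ref{eq:nsolvGCal}); the sign flips because $\mathcal W_0\mathcal W_n - \mathcal W_n\mathcal W_0$ becomes $-(\mathcal W_n\mathcal W_0 - \mathcal W_0\mathcal W_n)$ after moving to the other side.

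Finally, I would observe that the four identities, read in the order $\mathcal G_n, \tilde{\mathcal G}_n, \mathcal W_{-n}, \mathcal W_{n+1}$, form a valid recursion: assuming that $\mathcal W_{-k}, \mathcal W_{k+1}, \mathcal G_{k+1}, \tilde{\mathcal G}_{k+1}$ have been expressed in terms of $\mathcal W_0, \mathcal W_1, Z^\vee_1, \ldots, Z^\vee_{k+1}$ for all $k < n-1$, the right-hand side of (\ref{eq:nsolvGCal}) uses only $\mathcal W_0, \mathcal W_n$ (where $\mathcal W_n = \mathcal W_{(n-1)+1}$ is from the previous stage), the lower-index $\mathcal G_k, \tilde{\mathcal G}_k, \mathcal W_{-k}, \mathcal W_{n-k}$, and $Z^\vee_n$. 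The expected obstacle is purely bookkeeping in the $Z^\vee_n$ manipulation — keeping track of the $q^n+q^{-n}$ factor and the sign of the commutator — but there is no conceptual difficulty, since all four equations are immediate algebraic rearrangements of the defining relations and of Definition \ref{def:Zvee}.
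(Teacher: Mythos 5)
Your proposal is correct and follows essentially the same route as the paper: equations (\ref{eq:nsolvGtCal})--(\ref{eq:nsolvWpCal}) read off from (\ref{eq:n3p1vvC})--(\ref{eq:n3p3vvC}), and (\ref{eq:nsolvGCal}) obtained by combining (\ref{eq:ZnVee}) with $q^n$ times (\ref{eq:nsolvGtCal}) and solving for $\mathcal G_n$ — your "separate the $k=0$ and $k=n$ endpoint terms and substitute" is the same computation. The coefficient bookkeeping ($q^n/(q^n+q^{-n}) = 1/(1+q^{-2n})$, the sign flip on the commutator) and the check that the recursion only invokes previously constructed generators are both accurate.
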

\begin{proof} 
Equation 
(\ref{eq:nsolvGtCal}) is from
(\ref{eq:n3p1vvC}).
To obtain (\ref{eq:nsolvGCal}), add
 $q^n$ times 
(\ref{eq:nsolvGtCal}) to
(\ref{eq:ZnVee}),
 and solve the resulting equation for $\mathcal G_n$.
Equations
(\ref{eq:nsolvWmCal}),
(\ref{eq:nsolvWpCal}) are from
(\ref{eq:n3p2vvC}),
(\ref{eq:n3p3vvC}).
\end{proof}

\begin{corollary}
\label{cor:calUgen}
The algebra $\mathcal U^+_q$ is generated by
 $\mathcal W_0, \mathcal W_1, \mathcal Z$.
\end{corollary}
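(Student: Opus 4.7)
The plan is to deduce the corollary directly from Lemma \ref{lem:nrecgenCal}, which is essentially the whole content: that lemma gives explicit formulas (\ref{eq:nsolvGCal})--(\ref{eq:nsolvWpCal}) that express each alternating generator of $\mathcal U^+_q$ as a polynomial in $\mathcal W_0, \mathcal W_1$, certain previously constructed alternating generators, and the elements $Z^\vee_1, Z^\vee_2, \ldots$. Since by Definition \ref{def:calU} the algebra $\mathcal U^+_q$ is generated by its alternating generators, it suffices to show that every alternating generator lies in the subalgebra $\mathcal H$ of $\mathcal U^+_q$ generated by $\mathcal W_0, \mathcal W_1$, and $\mathcal Z$.

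The proof itself is a short induction along the order listed in Lemma \ref{lem:nrecgenCal}, namely
\[
\mathcal W_0,\ \mathcal W_1,\ \mathcal G_1,\ \mathcal{\tilde G}_1,\ \mathcal W_{-1},\ \mathcal W_2,\ \mathcal G_2,\ \mathcal{\tilde G}_2,\ \mathcal W_{-2},\ \ldots
\]
The base case $\mathcal W_0, \mathcal W_1 \in \mathcal H$ is immediate. For the inductive step, assume every alternating generator preceding $\mathcal G_n$ in the list lies in $\mathcal H$. Then (\ref{eq:nsolvGCal}) expresses $\mathcal G_n$ as a polynomial in such generators together with $Z^\vee_n$; since $Z^\vee_n \in \mathcal Z \subseteq \mathcal H$, we get $\mathcal G_n \in \mathcal H$. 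Next, (\ref{eq:nsolvGtCal}) gives $\mathcal{\tilde G}_n \in \mathcal H$, (\ref{eq:nsolvWmCal}) gives $\mathcal W_{-n} \in \mathcal H$, and (\ref{eq:nsolvWpCal}) gives $\mathcal W_{n+1} \in \mathcal H$. This completes the induction.

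There is no serious obstacle here; the work has already been done in establishing Lemma \ref{lem:nrecgenCal} (which in turn rests on Lemma \ref{lem:Zcent} and Definition \ref{def:Zvee}). The one thing to remember is that $\mathcal Z$ is generated by $\{Z^\vee_n\}_{n=1}^\infty$ (Definition \ref{def:calZ}), which is exactly what is needed to feed the recursion. Hence $\mathcal U^+_q = \mathcal H$, proving the corollary.
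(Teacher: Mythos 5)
Your proof is correct and follows exactly the route the paper takes: the paper's own proof of Corollary \ref{cor:calUgen} simply cites Lemma \ref{lem:nrecgenCal} together with the fact that $\lbrace Z^\vee_n\rbrace_{n=1}^\infty$ generate $\mathcal Z$. Your write-up just makes the implicit induction along the listed order explicit, which is fine.
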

\begin{proof} By Lemma
\ref{lem:nrecgenCal} and since
$\lbrace Z^\vee_n\rbrace_{n=1}^\infty$
 generate
$\mathcal Z$.
\end{proof}

\begin{note}\rm Lemma 
\ref{lem:nrecgenCal} and
Corollary \ref{cor:calUgen}
resemble the results for
$\mathcal A_q$ given in
\cite[Proposition~3.1]{basBel}
and \cite[Corollary~3.1]{basBel}.
\end{note}

\begin{lemma} 
\label{lem:qSerrecal}
 In $\mathcal U^+_q$ we have
\begin{align}
&
\lbrack \mathcal W_0, \lbrack \mathcal W_0, \lbrack \mathcal W_0,
\mathcal W_1\rbrack_q \rbrack_{q^{-1}} \rbrack=0,
\label{eq:WWqSerre1}
\\
&
\lbrack \mathcal W_1, \lbrack \mathcal W_1, \lbrack \mathcal W_1, 
\mathcal W_0\rbrack_q \rbrack_{q^{-1}}
\rbrack=0.
\label{eq:WWqSerre2}
\end{align}
\end{lemma}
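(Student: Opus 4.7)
The plan is to reduce both $q$-Serre relations to the single commutation $[\mathcal W_0,\mathcal W_{-1}]=0$ from (\ref{eq:n3p4vvC}). First I would expand the left side of (\ref{eq:WWqSerre1}) using the identity displayed just before Definition \ref{def:nposp}:
\begin{align*}
[\mathcal W_0,[\mathcal W_0,[\mathcal W_0,\mathcal W_1]_q]_{q^{-1}}]
=\mathcal W_0^3\mathcal W_1-[3]_q\mathcal W_0^2\mathcal W_1\mathcal W_0+[3]_q\mathcal W_0\mathcal W_1\mathcal W_0^2-\mathcal W_1\mathcal W_0^3.
\end{align*}
Writing $T=\mathcal{\tilde G}_1-\mathcal G_1$ and using (\ref{eq:n3p1vvC}) with $k=0$ in the form $\mathcal W_0\mathcal W_1-\mathcal W_1\mathcal W_0=(1-q^{-2})T$, I would commute every occurrence of $\mathcal W_1$ past the $\mathcal W_0$'s so as to move $\mathcal W_1$ to the right. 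The $\mathcal W_1\mathcal W_0^3$ contributions cancel, and using $1-[3]_q=-(q^2+q^{-2})$ the whole expression collapses to
\begin{align*}
(1-q^{-2})\bigl(\mathcal W_0^2 T-(q^2+q^{-2})\mathcal W_0 T\mathcal W_0+T\mathcal W_0^2\bigr).
\end{align*}

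Next I would invoke (\ref{eq:n3p2vvC}) with $k=0$, rewritten as
\begin{align*}
\mathcal W_0\mathcal G_1=q^{-2}\mathcal G_1\mathcal W_0+(1-q^{-2})\mathcal W_{-1},\qquad
\mathcal W_0\mathcal{\tilde G}_1=q^2\mathcal{\tilde G}_1\mathcal W_0-(q^2-1)\mathcal W_{-1}.
\end{align*}
Iterating these two moves brings every $\mathcal G_1$ and $\mathcal{\tilde G}_1$ appearing in $\mathcal W_0^2T$, $\mathcal W_0 T\mathcal W_0$, $T\mathcal W_0^2$ all the way to the left. The coefficients in front of $\mathcal{\tilde G}_1\mathcal W_0^2$ work out to $q^4-(q^2+q^{-2})q^2+1=0$, those in front of $\mathcal G_1\mathcal W_0^2$ to $-q^{-4}+(q^2+q^{-2})q^{-2}-1=0$, and after bookkeeping the only surviving contributions come from the $\mathcal W_{-1}$-terms and collapse to $(q^2-q^{-2})[\mathcal W_{-1},\mathcal W_0]$. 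By (\ref{eq:n3p4vvC}) with $k=0$, $\ell=1$, we have $[\mathcal W_0,\mathcal W_{-1}]=0$, so (\ref{eq:WWqSerre1}) follows.

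For (\ref{eq:WWqSerre2}) I would simply apply the automorphism $\sigma$ of $\mathcal U^+_q$ from Lemma \ref{lem:sym}, which swaps $\mathcal W_0\leftrightarrow\mathcal W_1$; since $\sigma$ is an algebra automorphism it preserves triple $q$-commutators, so (\ref{eq:WWqSerre1}) is carried to (\ref{eq:WWqSerre2}).

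The main obstacle is the bookkeeping in the middle step: one must verify that after two successive applications of the commutation rules the $\mathcal{\tilde G}_1\mathcal W_0^2$ and $\mathcal G_1\mathcal W_0^2$ coefficients really vanish, which is a clean but error-prone computation. Conceptually there is nothing deeper, since the identity $1-[3]_q=-(q^2+q^{-2})$ is exactly what is needed for both cancellations. Note that the argument does not use injectivity of $\varphi$ (which is established only in Section 5), so there is no circularity when Lemma \ref{lem:qSerrecal} is subsequently applied.
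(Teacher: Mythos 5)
Your argument is correct, and it takes a genuinely different route from the paper. The paper first derives the identity
$\mathcal G_1 = Z^\vee_1/(q+q^{-1}) - q\,\lbrack \mathcal W_0,\mathcal W_1\rbrack_{q^{-1}}/(q^2-q^{-2})$
from Lemma \ref{lem:nrecgenCal}, then starts from $0=\lbrack \mathcal W_0,\mathcal W_{-1}\rbrack$ and unwinds it through $\mathcal W_{-1}=\lbrack \mathcal W_0,\mathcal G_1\rbrack_q/(q-q^{-1})$, using the centrality of $Z^\vee_1$ (Lemma \ref{lem:Zcent}, proved in the appendix) to discard the $Z^\vee_1$ term. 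You instead expand the triple bracket directly, substitute $\lbrack \mathcal W_0,\mathcal W_1\rbrack=(1-q^{-2})(\mathcal{\tilde G}_1-\mathcal G_1)$ from (\ref{eq:n3p1vvC}), and push $\mathcal W_0$ past both $\mathcal G_1$ and $\mathcal{\tilde G}_1$ using (\ref{eq:n3p2vvC}); I checked the coefficients and the collapse to $(q^2-q^{-2})\lbrack \mathcal W_{-1},\mathcal W_0\rbrack$ is right, after which (\ref{eq:n3p4vvC}) finishes the job. Both proofs ultimately hinge on the same relation $\lbrack \mathcal W_0,\mathcal W_{-1}\rbrack=0$, but yours has the advantage of using only the defining relations of $\mathcal U^+_q$ and not the appendix's centrality result, at the cost of somewhat heavier bookkeeping; your use of the automorphism $\sigma$ from Lemma \ref{lem:sym} for (\ref{eq:WWqSerre2}) is also a cleaner way to dispatch the second relation than the paper's ``similarly obtained.'' Your closing remark about non-circularity is well taken and applies equally to the paper's own proof, which likewise predates Theorem \ref{thm:main1}.
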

\begin{proof} Consider 
(\ref{eq:WWqSerre1}). Setting $n=1$ in
(\ref{eq:nsolvGCal}),
(\ref{eq:nsolvWmCal}) we obtain
\begin{align}
\mathcal G_1 &= \frac{Z^\vee_1 + q \mathcal W_0 \mathcal W_1}{q+q^{-1}}
+ 
\frac{\lbrack \mathcal W_1, \mathcal W_0 \rbrack}{(1+q^{-2})(1-q^{-2})}
\nonumber
\\
&= \frac{Z^\vee_1}{q+q^{-1}}
- 
q \frac{\lbrack \mathcal W_0, \mathcal W_1 \rbrack_{q^{-1}}}{q^2-q^{-2}}
\label{G1form}
\end{align}
\noindent and
\begin{align}
\mathcal W_{-1} &=
\frac{\lbrack \mathcal W_0, \mathcal G_1 \rbrack_q}{q-q^{-1}}.
\label{eq:wmone}
\end{align}
\noindent 
The elements 
$\mathcal W_0, \mathcal W_{-1}$ commute by
(\ref{eq:n3p4vvC}), and $Z^\vee_1$ is central 
by Lemma
\ref{lem:Zcent}.
 By these comments and
(\ref{G1form}),
(\ref{eq:wmone}) 
we obtain
\begin{align*}
0 &= \lbrack \mathcal W_0, \mathcal W_{-1} \rbrack
\\
&= \frac{
 \lbrack \mathcal W_0, \lbrack \mathcal W_0,  \mathcal G_{1} 
 \rbrack_q
 \rbrack
 }{q-q^{-1}}
 \\
&= \frac{
 \lbrack \mathcal W_0, \lbrack \mathcal W_0,  \mathcal G_{1} 
 \rbrack
 \rbrack_q
 }{q-q^{-1}}
 \\
&= -q \frac{
 \lbrack \mathcal W_0, \lbrack \mathcal W_0,  \lbrack \mathcal W_0,
 \mathcal W_{1} 
 \rbrack_{q^{-1}}
 \rbrack
 \rbrack_q
 }{(q-q^{-1})(q^2-q^{-2})}
 \\
&= -q \frac{
 \lbrack \mathcal W_0, \lbrack \mathcal W_0,  \lbrack \mathcal W_0,
 \mathcal W_{1} 
 \rbrack_{q}
 \rbrack_{q^{-1}}
 \rbrack
 }{(q-q^{-1})(q^2-q^{-2})},
\end{align*}
which implies 
(\ref{eq:WWqSerre1}). 
The equation 
(\ref{eq:WWqSerre2}) is similarly obtained. 
\end{proof}

\begin{note}\rm
Lemma 
\ref{lem:qSerrecal} resembles
a result for $\mathcal A_q$ given in
\cite[Line~(3.7)]{basBel}.
\end{note}

\begin{lemma}
\label{lem:nphi}
There exists an algebra homomorphism $\phi:
U^+_q \otimes \mathbb F \lbrack z_1, z_2,\ldots \rbrack \to
\mathcal U^+_q$
that sends
\begin{align*}
W_0 \otimes 1 \mapsto \mathcal W_0,
\qquad \qquad
W_1 \otimes 1 \mapsto \mathcal W_1,
\qquad \qquad
1 \otimes z_n \mapsto Z_n, \qquad n \geq 1.
\end{align*}
\end{lemma}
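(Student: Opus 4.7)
The plan is to build $\phi$ by combining two auxiliary homomorphisms using the universal property of the tensor product of algebras.

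First I would construct an algebra homomorphism $\rho: U^+_q \to \mathcal U^+_q$ sending $A \mapsto \mathcal W_0$ and $B \mapsto \mathcal W_1$. By Definition \ref{def:nposp}, $U^+_q$ is presented by $A, B$ subject only to the $q$-Serre relations, so the existence of $\rho$ reduces to checking that $\mathcal W_0, \mathcal W_1$ satisfy those relations in $\mathcal U^+_q$. This is exactly the content of Lemma \ref{lem:qSerrecal}.

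Next I would construct an algebra homomorphism $\psi: \mathbb F\lbrack z_1, z_2, \ldots\rbrack \to \mathcal U^+_q$ sending $z_n \mapsto Z_n$ for $n \geq 1$. Since $\mathbb F\lbrack z_1, z_2, \ldots\rbrack$ is the free commutative $\mathbb F$-algebra on $\{z_n\}_{n=1}^\infty$, the existence of $\psi$ reduces to the fact that $\{Z_n\}_{n=1}^\infty$ are pairwise commuting elements of $\mathcal U^+_q$. But each $Z_n$ lies in $\mathcal Z$, which by Lemma \ref{lem:Zcent} (applied through the defining recursion in Definition \ref{def:Zn}) consists of central elements of $\mathcal U^+_q$; hence the $Z_n$ commute with everything in $\mathcal U^+_q$, in particular with each other.

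To combine $\rho$ and $\psi$ into a map on the tensor product, I would invoke the universal property: given two algebra maps into a common target whose images commute elementwise, there is a unique algebra homomorphism from the tensor product sending $a \otimes b \mapsto \rho(a)\psi(b)$. The required commutation $\rho(a)\psi(b) = \psi(b)\rho(a)$ for all $a \in U^+_q$ and $b \in \mathbb F\lbrack z_1, z_2, \ldots\rbrack$ is immediate from the centrality of each $Z_n$, since $\psi(b)$ lies in the subalgebra $\mathcal Z$ which is contained in the center of $\mathcal U^+_q$. Define $\phi$ to be this combined map; by construction $\phi(W_0 \otimes 1) = \mathcal W_0$, $\phi(W_1 \otimes 1) = \mathcal W_1$, and $\phi(1 \otimes z_n) = Z_n$.

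The only genuine obstacle is the verification that $\mathcal W_0, \mathcal W_1$ satisfy the $q$-Serre relations, but this has already been dispatched in Lemma \ref{lem:qSerrecal}; everything else is formal from the universal properties of free algebras and tensor products together with the centrality statement in Lemma \ref{lem:Zcent}.
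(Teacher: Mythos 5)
Your proposal is correct and matches the paper's argument, which cites exactly the same two ingredients: Lemma \ref{lem:qSerrecal} for the $q$-Serre relations satisfied by $\mathcal W_0, \mathcal W_1$, and the centrality of $\lbrace Z_n\rbrace_{n=1}^\infty$ from Lemma \ref{lem:Zcent}. You have simply written out in full the universal-property reasoning that the paper leaves implicit.
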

\begin{proof} By Lemma
\ref{lem:qSerrecal} and since
$\lbrace Z_n \rbrace_{n=1}^\infty$ are central in
$\mathcal U^+_q$.
\end{proof}

\begin{theorem}
\label{thm:main1}
The maps $\varphi$, $\phi$ are inverses.
Moreover they are bijections.
\end{theorem}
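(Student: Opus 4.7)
The plan is to show that $\varphi \circ \phi$ and $\phi \circ \varphi$ are each the identity map on appropriate generating sets, from which the bijectivity and mutual inverse property follow immediately.

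First I would verify $\varphi \circ \phi = \mathrm{id}$. Since $W_0, W_1$ generate $U^+_q$ and $\{z_n\}_{n=1}^\infty$ generates $\mathbb F \lbrack z_1, z_2, \ldots \rbrack$, the algebra $U^+_q \otimes \mathbb F \lbrack z_1, z_2, \ldots\rbrack$ is generated by the elements $W_0 \otimes 1$, $W_1 \otimes 1$, and $1 \otimes z_n$ for $n \geq 1$. On these generators I would compute: $\varphi(\phi(W_0 \otimes 1)) = \varphi(\mathcal W_0) = W_0 \otimes 1$ and $\varphi(\phi(W_1 \otimes 1)) = \varphi(\mathcal W_1) = W_1 \otimes 1$ by (\ref{eq:vphiW0W1}), while $\varphi(\phi(1 \otimes z_n)) = \varphi(Z_n) = 1 \otimes z_n$ by Lemma \ref{lem:etaZ}. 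Since algebra homomorphisms are determined by their values on generators, $\varphi \circ \phi = \mathrm{id}$.

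Next I would verify $\phi \circ \varphi = \mathrm{id}$. By Corollary \ref{cor:calUgen}, $\mathcal U^+_q$ is generated by $\mathcal W_0$, $\mathcal W_1$, and $\mathcal Z$; by Lemma \ref{lem:ZcZ}, $\mathcal Z$ is generated by $\{Z_n\}_{n=1}^\infty$. So $\mathcal U^+_q$ is generated by $\mathcal W_0, \mathcal W_1, Z_1, Z_2, \ldots$. On these: $\phi(\varphi(\mathcal W_0)) = \phi(W_0 \otimes 1) = \mathcal W_0$ and $\phi(\varphi(\mathcal W_1)) = \phi(W_1 \otimes 1) = \mathcal W_1$ by (\ref{eq:vphiW0W1}) and the definition of $\phi$, while $\phi(\varphi(Z_n)) = \phi(1 \otimes z_n) = Z_n$ by Lemma \ref{lem:etaZ}. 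Hence $\phi \circ \varphi = \mathrm{id}$.

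Having both compositions equal to the identity, $\varphi$ and $\phi$ are mutually inverse, and in particular each is a bijection. There is no substantive obstacle here; the result is essentially a bookkeeping consequence of Lemmas \ref{lem:varphi}, \ref{lem:etaZ}, \ref{lem:ZcZ}, and Corollary \ref{cor:calUgen}. The only conceptually nontrivial point was establishing that $\mathcal W_0, \mathcal W_1, \mathcal Z$ suffice to generate $\mathcal U^+_q$, which was already accomplished via the recursion of Lemma \ref{lem:nrecgenCal}; once that is in hand, the present theorem reduces to checking commutativity on generators.
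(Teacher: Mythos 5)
Your proposal is correct and follows essentially the same route as the paper: both check that $\phi\circ\varphi$ and $\varphi\circ\phi$ fix a generating set of the respective algebra, using Corollary \ref{cor:calUgen} with Lemma \ref{lem:ZcZ} on the $\mathcal U^+_q$ side and the generators $W_0\otimes 1$, $W_1\otimes 1$, $1\otimes z_n$ on the tensor-product side, together with (\ref{eq:vphiW0W1}) and Lemmas \ref{lem:etaZ}, \ref{lem:nphi}. No substantive difference from the paper's argument.
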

\begin{proof} By
Lemma
\ref{lem:ZcZ}
and
Corollary
\ref{cor:calUgen}, the algebra $\mathcal U^+_q$ is generated by
$\mathcal W_0$,
$\mathcal W_1$,
$\lbrace Z_n \rbrace_{n=1}^\infty$.
Each of these generators is fixed by the composition
$\phi \circ \varphi$, in view of 
(\ref{eq:vphiW0W1}) and
Lemmas
\ref{lem:etaZ},
\ref{lem:nphi}. Therefore 
$\phi \circ \varphi$ is the
identity map on $\mathcal U^+_q$.
By Definition
\ref{def:nposp}
and the 
construction, the algebra
$U^+_q \otimes \mathbb F \lbrack z_1, z_2,\ldots \rbrack $
is generated by
$W_0\otimes 1$,
$W_1\otimes 1$,
$\lbrace 1 \otimes z_n \rbrace_{n=1}^\infty$.
Each of these generators is fixed by 
$\varphi \circ \phi$, in view of
(\ref{eq:vphiW0W1}) and
Lemmas
\ref{lem:etaZ},
\ref{lem:nphi}. Therefore 
$\varphi \circ \phi$ is the
identity map on
$U^+_q \otimes \mathbb F \lbrack z_1, z_2,\ldots \rbrack$.
By these comments the maps $\varphi,\phi$ are inverses, and
hence bijections.
\end{proof}

\section{Two subalgebras of $\mathcal U^+_q$}

\noindent In this section we describe 
two subalgebras of 
$\mathcal U^+_q$:  the
 center of $\mathcal U^+_q$ and 
the subalgebra 
generated by $\mathcal W_0$, $\mathcal W_1$.
\medskip

\noindent 
To obtain the center of $\mathcal U^+_q$ we will use
the following fact.

\begin{lemma} 
\label{lem:uc} {\rm (See \cite{yamane}.)} 
The center of  
 $U^+_q$ is $\mathbb F 1$.
\end{lemma}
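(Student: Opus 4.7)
The plan is to use the $(\mathbb{N}\times\mathbb{N})$-grading on $U^+_q$ together with Rosso's embedding $\natural: U^+_q \hookrightarrow \mathbb{V}$.

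First I would reduce to homogeneous central elements: if $z \in U^+_q$ is central, then since $A$ and $B$ are homogeneous and the grading is respected by multiplication, each homogeneous component $z_{(m,n)}$ of $z$ is again central. Thus it suffices to show that every homogeneous central element of nonzero bidegree vanishes.

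Next I would dispose of the boundary cases. The homogeneous component of $U^+_q$ of bidegree $(m,0)$ is one-dimensional, spanned by $A^m$. For $m \geq 1$ the element $A^m$ does not commute with $B$: this is easiest to check after applying $\natural$, under which $A^m$ maps to a nonzero scalar multiple of the word $x^m$, while $y \star x^m - x^m \star y$ is a nonzero linear combination of the linearly independent words $x^i y x^{m-i}$ in $\mathbb{V}$. The bidegree $(0,n)$ case is symmetric via the automorphism $\sigma$ of Lemma \ref{lem:nAAut}.

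For the interior bidegrees $(m,n)$ with $m,n \geq 1$, I would again pass through $\natural$ and work in the $q$-shuffle algebra $\mathbb{V}$. The image $f = \natural(z)$ is a linear combination of words of type $(m,n)$, and centrality of $z$ forces $x \star f = f \star x$ and $y \star f = f \star y$. Choosing a linear order on words that behaves well under $\star$ (such as a lexicographic order for one of $x < y$ or $y < x$), one compares the maximal-order words on the two sides of these equations and forces the leading coefficient of $f$ to vanish; iterating, $f = 0$ and hence $z = 0$.

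The hard part is this last step: the $q$-shuffle product mixes all interleavings with nontrivial $q$-weights, so a naive leading-term argument risks accidental cancellations. Some care is required in choosing the right monomial order, or in replacing the leading-term argument with a more invariant description (for instance, via the braided coproduct on $\mathbb{V}$) to ensure the argument closes.
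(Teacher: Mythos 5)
The paper offers no proof of this lemma to compare against: the result is simply quoted from a personal communication of H.~Yamane (reference \cite{yamane}), so your proposal must be judged on its own terms. The first two steps are sound. The reduction to homogeneous central elements is correct, since for $z=\sum_{d}z_d$ the commutators $\lbrack z_d,A\rbrack$ (resp.\ $\lbrack z_d,B\rbrack$) lie in pairwise distinct homogeneous components, hence each vanishes, and $A,B$ generate $U^+_q$. The boundary case is also fine: the $(m,0)$-component is $\mathbb F A^m$, and under $\natural$ the coefficient of the word $yx^m$ in $y\star x^m$ is $1$ while in $x^m\star y$ it is $q^{-2m}$, so $\lbrack B,A^m\rbrack\neq 0$ for $m\geq 1$; the $(0,n)$ case follows by the automorphism $\sigma$.

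The genuine gap is the interior case $m,n\geq 1$, which is the entire content of the lemma, and you concede in your last paragraph that you have not closed it. The difficulty is not just a matter of caution. First, a given word can arise from several distinct insertions of $x$ into several distinct words in the support of $f$ (for example $xyxy$ arises from inserting $x$ into $yxy$ at position $0$ and into $xyy$ at position $2$), so the ``coefficient of an extremal word'' is already a sum over the support of $f$ and not a single term. Second, the one asymmetry that drove the boundary case --- the weight $q^{\langle x,w\rangle}$ picked up when a letter crosses all of $w$ --- degenerates exactly in bidegree $(n,n)$, where $\langle x,f\rangle=\langle y,f\rangle=0$; so for the most interesting components the two sides of $x\star f=f\star x$ and $y\star f=f\star y$ carry identical extremal weights and no contradiction is produced. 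To complete the argument you would need either a monomial order together with a proof that the resulting linear system on the coefficients of $f$ is nonsingular, or a structural substitute (for instance the nondegenerate pairing on $U^+_q$ and the associated skew-derivations, showing that a central homogeneous element of nonzero degree is annihilated by both derivations and hence is zero). As written, the proposal is a reasonable plan whose decisive step is missing.
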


\noindent Recall the subalgebra $\mathcal Z$ of
$\mathcal U^+_q$ described in
 Definition
\ref{def:calZ} and
Lemma
\ref{lem:ZcZ}.

\begin{proposition} 
\label{lem:UqC} 
The following {\rm (i)--(iv)} hold:
\begin{enumerate}
\item[\rm (i)] $\mathcal Z$ is the center of $\mathcal U^+_q$;
\item[\rm (ii)]  there exists an algebra isomorphism 
$\mathbb F \lbrack z_1, z_2,\ldots \rbrack \to \mathcal Z$
that sends $z_n \mapsto Z_n$ for $n \in \mathbb N$;
\item[\rm (iii)]  the above isomorphism sends
 $z^\vee_n \mapsto Z^\vee_n$ for $n \in \mathbb N$;
\item[\rm (iv)]  the inverse isomorphism is the
restriction of $\eta$ to $\mathcal Z$.
\end{enumerate}
\end{proposition}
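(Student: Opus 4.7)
The plan is to leverage the isomorphism $\varphi$ from Theorem \ref{thm:main1} to transport questions about the center of $\mathcal U^+_q$ to the much simpler algebra $U^+_q \otimes \mathbb F\lbrack z_1,z_2,\ldots\rbrack$, and then to use the recursive parallelism between the pairs $(Z_n, Z^\vee_n)$ and $(z_n, z^\vee_n)$ to identify $\mathcal Z$ as a polynomial algebra.

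First I would prove part (ii). Since the elements $\lbrace Z_n\rbrace_{n=1}^\infty$ are central in $\mathcal U^+_q$ (Lemma \ref{lem:Zcent}), they pairwise commute, so there is a well-defined algebra homomorphism $\psi: \mathbb F\lbrack z_1,z_2,\ldots\rbrack \to \mathcal Z$ sending $z_n \mapsto Z_n$. Surjectivity of $\psi$ is immediate from Lemma \ref{lem:ZcZ}. For injectivity, compose with $\varphi$: by Lemma \ref{lem:etaZ} the map $\varphi \circ \psi$ sends $z_n \mapsto 1\otimes z_n$, which is the standard injection $\mathbb F\lbrack z_1,z_2,\ldots\rbrack \hookrightarrow U^+_q \otimes \mathbb F\lbrack z_1,z_2,\ldots\rbrack$. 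Since $\varphi$ is a bijection (Theorem \ref{thm:main1}), $\psi$ is injective. Hence $\psi$ is an isomorphism.

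Next, for part (i), I would use that the center of a tensor product of unital algebras is the tensor product of their centers. By Lemma \ref{lem:uc} the center of $U^+_q$ is $\mathbb F 1$, so the center of $U^+_q \otimes \mathbb F\lbrack z_1,z_2,\ldots\rbrack$ equals $\mathbb F 1 \otimes \mathbb F\lbrack z_1,z_2,\ldots\rbrack$. By Lemma \ref{cor:19A} the image $\varphi(\mathcal Z)$ is exactly $\mathbb F \otimes \mathbb F\lbrack z_1,z_2,\ldots\rbrack$. Thus $\varphi$ sends $\mathcal Z$ bijectively onto the center of the target algebra; since $\varphi$ is an algebra isomorphism, $\mathcal Z$ is the center of $\mathcal U^+_q$.

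Part (iii) now follows essentially for free: Lemma \ref{lem:ZvZZ} gives $Z^\vee_n = \sum_{k=0}^n Z_k Z_{n-k} q^{n-2k}$, which is the exact analogue of the defining relation (\ref{eq:zvee}) for $z^\vee_n$. Applying the isomorphism $\psi$ from part (ii) sends the right-hand side of (\ref{eq:zvee}) to the right-hand side of (\ref{eq:ZvZZ}), so $\psi(z^\vee_n)=Z^\vee_n$. Finally, for part (iv), I would compute $\eta(Z^\vee_n)$ directly from (\ref{eq:ZnVee}): by Lemma \ref{lem:eta} the map $\eta$ annihilates each $\mathcal W_{-k}$ and $\mathcal W_{n-k}$ and sends $\mathcal G_k, \mathcal{\tilde G}_{n-k} \mapsto z_k, z_{n-k}$, so the $\mathcal W$-sum in (\ref{eq:ZnVee}) vanishes and the $\mathcal G\mathcal{\tilde G}$-sum yields precisely $z^\vee_n$. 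Then the recursion (\ref{eq:Znsol}) matches (\ref{eq:znsolve}) term by term, so an induction on $n$ gives $\eta(Z_n) = z_n$. This identifies $\eta|_{\mathcal Z}$ as the inverse of $\psi$.

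No part of this is especially technical; the one place to be careful is invoking the "center of a tensor product is the tensor product of centers" identity, which is standard for unital algebras over a field but worth citing explicitly. The rest is bookkeeping with the recursive definitions of $Z_n$ and $z_n$.
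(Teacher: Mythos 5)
Your proof is correct and follows essentially the same route as the paper: transport the center computation through $\varphi$ using Lemma \ref{lem:uc} and Lemma \ref{cor:19A}, and match the recursions for $Z_n, Z^\vee_n$ against those for $z_n, z^\vee_n$. The only cosmetic difference is that you obtain $\eta(Z_n)=z_n$ by direct computation from the definitions of $\eta$ and $Z^\vee_n$, whereas the paper reads it off the commuting diagram of Lemma \ref{lem:diagAgain} together with Lemma \ref{lem:etaZ}; both are one-line arguments and the rest of the structure coincides.
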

\begin{proof} 
(i) By 
Lemma
\ref{lem:uc} 
and the construction, the center of 
$U^+_q \otimes \mathbb F \lbrack z_1, z_2,\ldots \rbrack $
is equal to  
$\mathbb F \otimes \mathbb F \lbrack z_1, z_2,\ldots \rbrack $.
Applying $\phi = \varphi^{-1}$ and Lemma
\ref{cor:19A}, we find that the center of
$\mathcal U^+_q$ is equal to $\mathcal Z$.
\\
(ii), (iv) Since $\lbrace Z_n\rbrace_{n=1}^\infty$ mutually
commute, there exists an algebra homomorphism
$\sharp : 
\mathbb F \lbrack z_1, z_2,\ldots \rbrack \to 
\mathcal U^+_q$ that sends
$z_n \mapsto Z_n$ for $n \geq 1$.
The map $\sharp$ has image $\mathcal Z$
by
Lemma
\ref{lem:ZcZ}. By Lemma
\ref{lem:diagAgain} and
Lemma
\ref{lem:etaZ},
the map $\eta$ sends
$Z_n \mapsto z_n$ for $n \geq 1$.
So the restriction of $\eta$ to
$\mathcal Z$ is the inverse of $\sharp$.
These maps are invertible and hence isomorphisms.
\\
\noindent (iii) Compare
(\ref{eq:zvee}),
(\ref{eq:ZvZZ}) and use (ii) above.
\end{proof}

\begin{corollary}
\label{cor:clarify}
The elements $\lbrace Z_n \rbrace_{n=1}^\infty$
are algebraically independent. Moreover the elements
 $\lbrace Z^\vee_n \rbrace_{n=1}^\infty$ are algebraically
 independent.
 \end{corollary}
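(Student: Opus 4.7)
The plan is to deduce both algebraic independence statements directly from the isomorphism $\mathbb F \lbrack z_1, z_2,\ldots\rbrack \to \mathcal Z$ established in Proposition \ref{lem:UqC}(ii), which sends $z_n \mapsto Z_n$ for $n \geq 1$. Since algebra isomorphisms preserve algebraic independence, and since $\lbrace z_n\rbrace_{n=1}^\infty$ are by construction algebraically independent generators of the polynomial algebra $\mathbb F \lbrack z_1, z_2,\ldots\rbrack$, it follows at once that $\lbrace Z_n\rbrace_{n=1}^\infty$ are algebraically independent in $\mathcal Z$ (and hence in $\mathcal U^+_q$).

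For the second assertion, I would apply the same isomorphism together with Proposition \ref{lem:UqC}(iii), which asserts that the map also sends $z^\vee_n \mapsto Z^\vee_n$. By Corollary \ref{cor:homzzv} the elements $\lbrace z^\vee_n\rbrace_{n=1}^\infty$ are algebraically independent in $\mathbb F \lbrack z_1, z_2,\ldots\rbrack$, so their images $\lbrace Z^\vee_n\rbrace_{n=1}^\infty$ under this isomorphism are algebraically independent in $\mathcal Z$. There is no real obstacle here; both parts are immediate transport-of-structure consequences of Proposition \ref{lem:UqC}, and the corollary essentially just records this observation.
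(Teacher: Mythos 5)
Your proof is correct and is essentially identical to the paper's own argument: the first assertion is transported through the isomorphism of Proposition \ref{lem:UqC}(ii), and the second follows from Proposition \ref{lem:UqC}(iii) together with Corollary \ref{cor:homzzv}. You have merely made explicit the (standard) fact that algebra isomorphisms preserve algebraic independence, which the paper leaves implicit.
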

\begin{proof} The first assertion follows from
Proposition
\ref{lem:UqC}(ii). The second assertion follows from
Corollary
\ref{cor:homzzv}
and
Proposition
\ref{lem:UqC}(iii).
\end{proof}

\noindent Let $\langle \mathcal W_0, \mathcal W_1\rangle$
denote the subalgebra of $\mathcal U^+_q$ generated by
$\mathcal W_0, \mathcal W_1$.

\begin{proposition}
\label{prop:gammainv}
The following {\rm (i), (ii)} hold:
\begin{enumerate}
\item[\rm (i)] there exists an algebra isomorphism
$U^+_q \to 
\langle \mathcal W_0, \mathcal W_1\rangle$
that sends $W_0 \mapsto \mathcal W_0$
and $W_1 \mapsto \mathcal W_1$;
\item[\rm (ii)] the inverse isomorphism is the restriction of
$\gamma$ to 
$\langle \mathcal W_0, \mathcal W_1\rangle$.
\end{enumerate}
\end{proposition}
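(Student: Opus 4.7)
The plan is to construct a candidate inverse using the presentation of $U^+_q$ by generators and $q$-Serre relations, and then verify it is the inverse of $\gamma|_{\langle \mathcal W_0, \mathcal W_1\rangle}$ via a one-line composition check. First I would invoke Lemma \ref{lem:qSerrecal}, which tells us that $\mathcal W_0, \mathcal W_1 \in \mathcal U^+_q$ satisfy the $q$-Serre relations. Combined with Definition \ref{def:nposp}, which presents $U^+_q$ by two generators $A, B$ subject only to those relations, the universal property yields an algebra homomorphism $\psi : U^+_q \to \mathcal U^+_q$ sending $A \mapsto \mathcal W_0$ and $B \mapsto \mathcal W_1$. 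By construction the image of $\psi$ is exactly $\langle \mathcal W_0, \mathcal W_1\rangle$, so we already have a surjection $\psi : U^+_q \twoheadrightarrow \langle \mathcal W_0, \mathcal W_1\rangle$.

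Next I would establish injectivity of $\psi$ by computing $\gamma \circ \psi$ on the generators of $U^+_q$. By Lemma \ref{lem:gamma}, $\gamma(\mathcal W_0) = W_0 = A$ and $\gamma(\mathcal W_1) = W_1 = B$, so $(\gamma \circ \psi)(A) = A$ and $(\gamma \circ \psi)(B) = B$. Since $A, B$ generate $U^+_q$, this forces $\gamma \circ \psi = \mathrm{id}_{U^+_q}$, and in particular $\psi$ is injective. Hence $\psi$ is an isomorphism onto $\langle \mathcal W_0, \mathcal W_1\rangle$, which proves (i). For (ii), the bijection $\psi$ satisfies $\gamma \circ \psi = \mathrm{id}$, so the restriction $\gamma|_{\langle \mathcal W_0, \mathcal W_1\rangle}$ is necessarily the two-sided inverse of $\psi$.

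I do not expect a serious obstacle: the real work — verifying the $q$-Serre relations for $\mathcal W_0, \mathcal W_1$ inside $\mathcal U^+_q$ — has already been carried out in Lemma \ref{lem:qSerrecal}, where centrality of $Z^\vee_1$ (Lemma \ref{lem:Zcent}) and the commutation $[\mathcal W_0, \mathcal W_{-1}] = 0$ from (\ref{eq:n3p4vvC}) do the heavy lifting. Everything else is a universal-property argument plus a left-inverse check, both routine.
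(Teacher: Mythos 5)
Your proposal is correct and follows essentially the same route as the paper: both invoke Definition \ref{def:nposp} together with Lemma \ref{lem:qSerrecal} to obtain the homomorphism $U^+_q \to \mathcal U^+_q$ sending $W_0 \mapsto \mathcal W_0$, $W_1 \mapsto \mathcal W_1$, and then use the action of $\gamma$ on $\mathcal W_0, \mathcal W_1$ to identify the restriction of $\gamma$ as the inverse. Your extra explicitness about surjectivity onto $\langle \mathcal W_0, \mathcal W_1\rangle$ and injectivity via $\gamma \circ \psi = \mathrm{id}$ merely spells out what the paper leaves implicit.
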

\begin{proof} By Definition 
\ref{def:nposp} and
Lemma
\ref{lem:qSerrecal}, there exists an algebra homomorphism
$\flat: U^+_q \to \mathcal U^+_q$ that sends
$W_0 \mapsto \mathcal W_0$
and $W_1 \mapsto \mathcal W_1$.
By Lemma
\ref{lem:gamma} the map $\gamma$ sends
$\mathcal W_0 \mapsto  W_0$
and $\mathcal W_1 \mapsto  W_1$.
So the restriction of $\gamma$ to
$\langle \mathcal W_0, \mathcal W_1\rangle$ is
the inverse of $\flat$.
These maps are invertible and hence isomorphisms.
\end{proof}

\noindent Next we describe how the subalgebras $\mathcal Z$
and 
$\langle \mathcal W_0, \mathcal W_1\rangle$ are related.

\begin{proposition} 
\label{lem:UZ}
The multiplication map
\begin{align*}
\langle \mathcal W_0,\mathcal W_1\rangle \otimes \mathcal Z &\to
	       \mathcal U^+_q 
	       \\
w \otimes z &\mapsto      wz            
\end{align*}
is an algebra isomorphism.
\end{proposition}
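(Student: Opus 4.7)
My plan is to push the multiplication map across the isomorphism $\varphi$ of Theorem \ref{thm:main1}, reducing the statement to a transparent identification of tensor factors.

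First I would verify that the multiplication map is an algebra homomorphism. By Lemma \ref{lem:Zcent}, every element of $\mathcal Z$ is central in $\mathcal U^+_q$, so for $w_1,w_2 \in \langle \mathcal W_0, \mathcal W_1\rangle$ and $z_1,z_2 \in \mathcal Z$,
\[
(w_1 z_1)(w_2 z_2) \;=\; w_1 w_2 z_1 z_2,
\]
which matches the image of $(w_1 \otimes z_1)(w_2 \otimes z_2) = w_1 w_2 \otimes z_1 z_2$ under the multiplication map.

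Next I would compute $\varphi \circ \mathrm{mult}$. By Lemma \ref{lem:varphi}, $\varphi(\mathcal W_i) = W_i \otimes 1 = \gamma(\mathcal W_i) \otimes 1$ for $i = 0,1$; since both $\varphi$ and $w \mapsto \gamma(w) \otimes 1$ are algebra homomorphisms agreeing on the generators of $\langle \mathcal W_0, \mathcal W_1 \rangle$, we get $\varphi(w) = \gamma(w) \otimes 1$ for all $w \in \langle \mathcal W_0,\mathcal W_1\rangle$. Similarly, by Lemma \ref{lem:etaZ}, $\varphi(Z_n) = 1 \otimes z_n = 1 \otimes \eta(Z_n)$; since $\mathcal Z$ is generated by $\{Z_n\}_{n=1}^\infty$ (Lemma \ref{lem:ZcZ}), we get $\varphi(z) = 1 \otimes \eta(z)$ for all $z \in \mathcal Z$. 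Hence
\[
\varphi(wz) \;=\; \varphi(w)\varphi(z) \;=\; (\gamma(w) \otimes 1)(1 \otimes \eta(z)) \;=\; \gamma(w) \otimes \eta(z),
\]
so $\varphi \circ \mathrm{mult}$ equals the tensor product map $\gamma|_{\langle \mathcal W_0,\mathcal W_1\rangle} \otimes \eta|_{\mathcal Z}$.

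By Proposition \ref{prop:gammainv} the restriction of $\gamma$ to $\langle \mathcal W_0,\mathcal W_1\rangle$ is an algebra isomorphism onto $U^+_q$, and by Proposition \ref{lem:UqC}(ii),(iv) the restriction of $\eta$ to $\mathcal Z$ is an algebra isomorphism onto $\mathbb F \lbrack z_1, z_2, \ldots \rbrack$. Their tensor product is therefore an algebra isomorphism onto $U^+_q \otimes \mathbb F \lbrack z_1, z_2, \ldots \rbrack$. Post-composing with $\varphi^{-1}$, which is an isomorphism by Theorem \ref{thm:main1}, recovers the multiplication map, which is consequently an algebra isomorphism. I do not foresee any substantive obstacle: all of the real content has already been assembled in Theorem \ref{thm:main1} together with the identifications of $\langle \mathcal W_0, \mathcal W_1\rangle$ and $\mathcal Z$ with the two tensor factors of $U^+_q \otimes \mathbb F \lbrack z_1, z_2, \ldots \rbrack$.
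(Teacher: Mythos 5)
Your proof is correct and follows essentially the same route as the paper: both establish that the multiplication map is an algebra homomorphism using centrality of $\mathcal Z$, and both identify it with the composition of $\gamma_{\rm rest}\otimes\eta_{\rm rest}$ and the isomorphism of Theorem \ref{thm:main1} (the paper writes $m=\phi\circ(\gamma_{\rm rest}\otimes\eta_{\rm rest})$ by checking on generators, while you equivalently compute $\varphi\circ m=\gamma_{\rm rest}\otimes\eta_{\rm rest}$ directly).
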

\begin{proof} 
Let $m$ denote the above multiplication map.
The map $m$ is an algebra homomorphism since
$\mathcal Z$ is central in $\mathcal U^+_q$.
Let $\gamma_{\rm rest}$ denote the
restriction of $\gamma $ to
$\langle \mathcal W_0,\mathcal W_1\rangle $.
The map $\gamma_{\rm rest}: 
\langle \mathcal W_0,\mathcal W_1\rangle  \to U^+_q$
is an algebra isomorphism by Proposition
\ref{prop:gammainv}(ii).
Let $\eta_{\rm rest}$ denote the
 restriction of
$\eta$ to $\mathcal Z$.
The map $\eta_{\rm rest}: \mathcal Z \to 
             \mathbb F \lbrack z_1, z_2,\ldots \rbrack $
	     is an algebra isomorphism by Proposition
\ref{lem:UqC}(iv).
By these comments and
Theorem
\ref{thm:main1},
the composition
\begin{equation}
{\begin{CD}
\langle \mathcal W_0,\mathcal W_1\rangle \otimes \mathcal Z 
@>>\gamma_{\rm rest} \otimes \eta_{\rm rest}  >
             U^+_q \otimes \mathbb F \lbrack z_1, z_2,\ldots \rbrack 
         @>> \phi >  \mathcal U^+_q 
                        \end{CD}} 
\label{eq:mfact}
\end{equation}
is an algebra isomorphism.
The composition 
(\ref{eq:mfact})
is equal to 
$m$, since it agrees with $m$
on the generators $\mathcal W_0 \otimes 1$,
 $\mathcal W_1 \otimes 1$,
$\lbrace 1\otimes Z_n\rbrace_{n=1}^\infty$.
It follows that $m$ is an algebra isomorphism.
\end{proof}

\section{The kernels of $\gamma$ and $\eta$}

\noindent Recall the maps $\gamma$ and $\eta$ from
Section 3. In this section we describe their kernels.
We begin with $\gamma$.

\begin{proposition} The following are the same:
\begin{enumerate}
\item[\rm (i)] 
the kernel of $\gamma$;
\item[\rm (ii)]  the 2-sided ideal of $\mathcal U^+_q$
generated by $\lbrace Z_n \rbrace_{n=1}^\infty$;
\item[\rm (iii)]  the 2-sided ideal of $\mathcal U^+_q$
generated by $\lbrace Z^\vee_n \rbrace_{n=1}^\infty$.
\end{enumerate}
\end{proposition}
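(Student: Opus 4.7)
The plan is to leverage the isomorphism $\varphi$ of Theorem \ref{thm:main1} to transport the question to $U^+_q \otimes \mathbb F\lbrack z_1, z_2,\ldots\rbrack$, where the analogous kernel is transparent. First I would verify that (i) equals (ii). By the commutative diagram in Lemma \ref{lem:diag} we have $\gamma = (\mathrm{id}\otimes \theta)\circ \varphi$, so $\ker \gamma = \varphi^{-1}(\ker(\mathrm{id}\otimes \theta))$. The kernel of $\mathrm{id}\otimes \theta$ is $U^+_q \otimes (\ker \theta)$, and $\ker \theta$ is the ideal of $\mathbb F\lbrack z_1, z_2,\ldots\rbrack$ generated by $\{z_n\}_{n=1}^\infty$, so $\ker(\mathrm{id}\otimes \theta)$ is the two-sided ideal of $U^+_q \otimes \mathbb F\lbrack z_1, z_2,\ldots\rbrack$ generated by $\{1\otimes z_n\}_{n=1}^\infty$. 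Since $\varphi$ is an algebra isomorphism (Theorem \ref{thm:main1}) and $\varphi(Z_n) = 1\otimes z_n$ (Lemma \ref{lem:etaZ}), the preimage of this ideal is precisely the two-sided ideal of $\mathcal U^+_q$ generated by $\{Z_n\}_{n=1}^\infty$, establishing (i) $=$ (ii).

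Next I would show (ii) $=$ (iii). Let $\mathcal I$ denote the ideal in (ii) and $\mathcal I^\vee$ the ideal in (iii). For the inclusion $\mathcal I^\vee \subseteq \mathcal I$, observe from Lemma \ref{lem:ZvZZ} that each $Z^\vee_n$ with $n\geq 1$ is an $\mathbb F$-linear combination of products of the $Z_k$ with $1\leq k \leq n$, hence lies in $\mathcal I$. For the reverse inclusion, I would use Definition \ref{def:Zn} together with induction on $n$: solving $(q^n+q^{-n})Z_n = Z^\vee_n - \sum_{k=1}^{n-1} Z_k Z_{n-k}q^{n-2k}$ shows that $Z_n$ is a polynomial in $Z^\vee_1,\ldots,Z^\vee_n$ with no constant term. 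Alternatively, one can simply invoke Proposition \ref{lem:UqC}(ii), (iii), which identifies $\mathcal Z$ with $\mathbb F\lbrack z_1, z_2,\ldots\rbrack$ carrying $Z_n \leftrightarrow z_n$ and $Z^\vee_n \leftrightarrow z^\vee_n$; then Lemma \ref{lem:zpoly} gives the desired expression. Either way, each $Z_n$ lies in $\mathcal I^\vee$, so $\mathcal I \subseteq \mathcal I^\vee$.

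I do not expect a serious obstacle here: the isomorphism $\varphi$ has done the heavy lifting, and the polynomial change of variables between $z_n$ and $z^\vee_n$ has already been handled in Section 4. The only minor bookkeeping point is ensuring the polynomial expressions for $Z_n$ in terms of $Z^\vee_1,\ldots,Z^\vee_n$ (and vice versa) have no constant term, which is immediate from the homogeneity statements in Lemmas \ref{lem:zvpoly} and \ref{lem:zpoly}.
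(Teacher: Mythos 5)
Your proposal is correct and follows essentially the same route as the paper: the diagram of Lemma \ref{lem:diag} reduces (i)$=$(ii) to computing $\ker({\rm id}\otimes\theta)$ and pulling back along $\varphi$ using $\varphi(Z_n)=1\otimes z_n$, and the equivalence (ii)$=$(iii) comes from the mutual polynomial expressions (\ref{eq:Znsol}) and (\ref{eq:ZvZZ}). The only difference is that you spell out the details the paper leaves implicit.
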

\begin{proof} (i), (ii) In the diagram of Lemma
\ref{lem:diag}, the two horizontal maps are bijections.
So the kernel of $\gamma$ is the $\varphi$-preimage of
the kernel of ${\rm id} \otimes \theta$.
The kernel of ${\rm id} \otimes \theta$ is obtained 
using the description of $\theta$
above Lemma \ref{lem:diag}.
\\
\noindent (iii) Use
(\ref{eq:Znsol}), 
(\ref{eq:ZvZZ}) and (ii) above.
\end{proof}

\begin{proposition} The vector space $\mathcal U^+_q$
is the direct sum of the following:
\begin{enumerate}
\item[\rm (i)]  the kernel of $\gamma$;
\item[\rm (ii)]  the subalgebra $\langle \mathcal W_0, \mathcal W_1\rangle$.
\end{enumerate}
\end{proposition}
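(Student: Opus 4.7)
The plan is to translate the claim across the isomorphism $\varphi$ of Theorem \ref{thm:main1} and then read off a direct sum decomposition of the target that is transparent.

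First I would identify the image of each summand under $\varphi$. By (\ref{eq:vphiW0W1}), $\varphi$ sends $\mathcal W_0 \mapsto W_0 \otimes 1$ and $\mathcal W_1 \mapsto W_1 \otimes 1$. Since $\varphi$ is an algebra homomorphism and since $W_0, W_1$ generate $U^+_q$ (Definition \ref{def:nposp}), the image of the subalgebra $\langle \mathcal W_0, \mathcal W_1 \rangle$ under $\varphi$ is exactly $U^+_q \otimes \mathbb F 1$. Next, by the commutative diagram of Lemma \ref{lem:diag}, an element $u \in \mathcal U^+_q$ lies in $\ker \gamma$ if and only if $(\mathrm{id} \otimes \theta)(\varphi(u)) = 0$, i.e.\ if and only if $\varphi(u) \in U^+_q \otimes \ker\theta$. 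Thus $\varphi(\ker \gamma) = U^+_q \otimes \ker \theta$.

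Second, as noted in the paragraph preceding Lemma \ref{lem:diag}, the surjection $\theta$ yields the vector space decomposition $\mathbb F\lbrack z_1, z_2, \ldots \rbrack = \mathbb F 1 \oplus \ker \theta$. Tensoring this on the left with $U^+_q$ gives
\[
U^+_q \otimes \mathbb F\lbrack z_1, z_2, \ldots \rbrack
= \bigl(U^+_q \otimes \mathbb F 1\bigr) \oplus \bigl(U^+_q \otimes \ker \theta\bigr).
\]

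Finally, since $\varphi$ is an $\mathbb F$-linear bijection by Theorem \ref{thm:main1}, pulling this decomposition back through $\varphi^{-1}$ yields $\mathcal U^+_q = \langle \mathcal W_0, \mathcal W_1 \rangle \oplus \ker \gamma$, as required. There is no serious obstacle here; the argument is entirely structural once one has Theorem \ref{thm:main1} and Lemma \ref{lem:diag}. The only minor point worth verifying explicitly is that $\varphi\bigl(\langle \mathcal W_0, \mathcal W_1\rangle\bigr)$ equals all of $U^+_q \otimes \mathbb F 1$ and not a proper subalgebra, but this is immediate from (\ref{eq:vphiW0W1}) together with the fact that $W_0, W_1$ generate $U^+_q$.
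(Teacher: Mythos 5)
Your proof is correct, but it follows a different route from the paper's. The paper's proof is a one-liner: by Proposition \ref{prop:gammainv}(ii), the restriction of $\gamma$ to $\langle \mathcal W_0,\mathcal W_1\rangle$ is a linear bijection onto $U^+_q$, and since $\gamma$ is surjective, standard linear algebra (the splitting of a surjection by a subspace on which it restricts to an isomorphism, equivalently the idempotent $\flat\circ\gamma$) gives $\mathcal U^+_q=\ker\gamma\oplus\langle\mathcal W_0,\mathcal W_1\rangle$ directly, with no reference to $\varphi$. You instead transport the whole statement through the isomorphism $\varphi$ of Theorem \ref{thm:main1}, using the commutative diagram of Lemma \ref{lem:diag} to identify $\varphi(\ker\gamma)=U^+_q\otimes\ker\theta$ and $\varphi(\langle\mathcal W_0,\mathcal W_1\rangle)=U^+_q\otimes\mathbb F 1$, and then read the decomposition off from $\mathbb F\lbrack z_1,z_2,\ldots\rbrack=\mathbb F 1\oplus\ker\theta$. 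Both arguments are sound. Yours has the virtue of being the same diagram-chase template the paper uses to describe $\ker\gamma$ in the proposition immediately preceding this one, so the two results come out of a single computation; the cost is that it invokes the full strength of Theorem \ref{thm:main1}, whereas the paper's argument needs only Proposition \ref{prop:gammainv}, which is established independently of $\varphi$. Your final remark is well taken: the only point requiring a word is that $\varphi(\langle\mathcal W_0,\mathcal W_1\rangle)$ is all of $U^+_q\otimes\mathbb F 1$, which follows since the image of a subalgebra generated by a set is the subalgebra generated by the images and $W_0,W_1$ generate $U^+_q$.
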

\begin{proof} By Proposition
\ref{prop:gammainv}(ii) and linear algebra.
\end{proof}

\noindent We turn our attention to $\eta$.

\begin{proposition} The following are the same:
\begin{enumerate}
\item[\rm (i)] the kernel of $\eta$;
\item[\rm (ii)] the 2-sided ideal of $\mathcal U^+_q$
generated by $\mathcal W_0$, $\mathcal W_1$.
\end{enumerate}
\end{proposition}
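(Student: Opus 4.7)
The plan is to deduce the result from the commutative diagram of Lemma \ref{lem:diagAgain}, exactly parallel to how the analogous statement for $\ker \gamma$ was handled. One inclusion is easy: since $\eta(\mathcal W_{-k})=0$ and $\eta(\mathcal W_{k+1})=0$ for all $k\in\mathbb N$, in particular $\mathcal W_0,\mathcal W_1\in\ker\eta$, so the $2$-sided ideal they generate is contained in $\ker\eta$.

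For the reverse inclusion I would use Lemma \ref{lem:diagAgain}. The top horizontal map $\varphi$ is a bijection by Theorem \ref{thm:main1}, and the bottom horizontal map $x\mapsto 1\otimes x$ is clearly a bijection. Therefore $\ker\eta$ is the $\varphi$-preimage of $\ker(\vartheta\otimes{\rm id})$. So it suffices to describe $\ker(\vartheta\otimes{\rm id})$.

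Here I would use the description of $\vartheta$ given just above Lemma \ref{lem:diagAgain}: $\vartheta:U^+_q\to\mathbb F$ is a surjection with $U^+_q=\mathbb F 1\oplus\ker\vartheta$, and $\ker\vartheta$ is the $2$-sided ideal of $U^+_q$ generated by $A=W_0$ and $B=W_1$. Tensoring the direct sum decomposition with $\mathbb F\lbrack z_1,z_2,\ldots\rbrack$ and applying $\vartheta\otimes{\rm id}$, one sees that $\ker(\vartheta\otimes{\rm id})=\ker\vartheta\otimes\mathbb F\lbrack z_1,z_2,\ldots\rbrack$. This latter space is exactly the $2$-sided ideal of $U^+_q\otimes\mathbb F\lbrack z_1,z_2,\ldots\rbrack$ generated by $W_0\otimes 1$ and $W_1\otimes 1$: containment is clear, and conversely any element of $\ker\vartheta\otimes\mathbb F\lbrack z_1,z_2,\ldots\rbrack$ is a sum of terms of the form $(aW_0 b)\otimes p$ or $(aW_1 b)\otimes p$, each of which factors as $(a\otimes 1)(W_i\otimes 1)(b\otimes p)$.

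To conclude, apply $\varphi^{-1}=\phi$. By (\ref{eq:vphiW0W1}) the isomorphism $\phi$ sends $W_0\otimes 1\mapsto\mathcal W_0$ and $W_1\otimes 1\mapsto\mathcal W_1$, and algebra isomorphisms send $2$-sided ideals generated by a set onto the $2$-sided ideals generated by the images. Hence $\ker\eta=\phi(\ker(\vartheta\otimes{\rm id}))$ is the $2$-sided ideal of $\mathcal U^+_q$ generated by $\mathcal W_0$ and $\mathcal W_1$, which proves the proposition. The only mildly delicate step is the identification of $\ker(\vartheta\otimes{\rm id})$ as an ideal generated by $W_0\otimes 1,W_1\otimes 1$; everything else is diagram chasing that directly mirrors the earlier proof describing $\ker\gamma$.
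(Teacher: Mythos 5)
Your proposal is correct and follows essentially the same route as the paper: the paper's proof also chases the diagram of Lemma \ref{lem:diagAgain}, notes the two horizontal maps are bijections, and identifies $\ker\eta$ as the $\varphi$-preimage of $\ker(\vartheta\otimes{\rm id})$ using the description of $\vartheta$. Your write-up merely fills in the details (the tensor decomposition and the transport of generated ideals under the isomorphism $\phi$) that the paper leaves implicit.
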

\begin{proof} In the diagram of  Lemma
\ref{lem:diagAgain}, the two horizontal maps
are bijections. 
So the kernel of $\eta$ is the $\varphi$-preimage of
the kernel of $\vartheta\otimes {\rm id}$.
The kernel of $\vartheta\otimes {\rm id}$ is obtained 
using the description of $\vartheta$
above Lemma
\ref{lem:diagAgain}.
\end{proof}

\begin{proposition} 
The vector space $\mathcal U^+_q$ is the direct
sum of the following:
\begin{enumerate}
\item[\rm (i)] the center $\mathcal Z$ of $\mathcal U^+_q$;
\item[\rm (ii)] the kernel of $\eta$. 
\end{enumerate}
\end{proposition}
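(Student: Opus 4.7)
The plan is to transport a direct sum decomposition across the isomorphism $\varphi$, using the commutative diagram in Lemma \ref{lem:diagAgain}.

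First I would observe that since $\vartheta : U^+_q \to \mathbb F$ is a surjective algebra homomorphism, we have the vector space direct sum $U^+_q = \mathbb F 1 \oplus \ker \vartheta$ (as noted in the text above Lemma \ref{lem:diagAgain}). Tensoring with $\mathbb F \lbrack z_1, z_2,\ldots \rbrack$ then gives the direct sum
\begin{align*}
U^+_q \otimes \mathbb F \lbrack z_1, z_2,\ldots \rbrack =
\bigl(\mathbb F 1 \otimes \mathbb F \lbrack z_1, z_2,\ldots \rbrack\bigr)
\oplus
\bigl(\ker \vartheta \otimes \mathbb F \lbrack z_1, z_2,\ldots \rbrack\bigr),
\end{align*}
where the second summand is precisely $\ker(\vartheta \otimes {\rm id})$.

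Next I would apply the bijection $\phi = \varphi^{-1}$ from Theorem \ref{thm:main1} to transport this decomposition to $\mathcal U^+_q$. By Lemma \ref{cor:19A}, $\varphi$ sends $\mathcal Z$ onto $\mathbb F 1 \otimes \mathbb F \lbrack z_1, z_2,\ldots \rbrack$, and this restriction is an isomorphism by Proposition \ref{lem:UqC}(ii) together with Lemma \ref{lem:etaZ}. Hence $\phi$ sends $\mathbb F 1 \otimes \mathbb F \lbrack z_1, z_2,\ldots \rbrack$ onto $\mathcal Z$. Furthermore, by the commutativity of the diagram in Lemma \ref{lem:diagAgain}, the preimage of $\ker(\vartheta \otimes {\rm id})$ under $\varphi$ is $\ker \eta$; equivalently, $\phi$ sends $\ker(\vartheta \otimes {\rm id})$ onto $\ker \eta$.

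Since $\varphi$ is a bijection preserving the vector space structure, the direct sum decomposition transports to
\begin{align*}
\mathcal U^+_q = \mathcal Z \oplus \ker \eta,
\end{align*}
which is the desired statement. The argument is essentially bookkeeping against the already-established Lemmas \ref{lem:diagAgain}, \ref{cor:19A}, Theorem \ref{thm:main1}, and Proposition \ref{lem:UqC}, so I do not expect any substantive obstacle; the only point requiring care is verifying that the two summands on the $U^+_q \otimes \mathbb F \lbrack z_1, z_2,\ldots \rbrack$ side correspond under $\phi$ exactly to $\mathcal Z$ and $\ker \eta$, respectively, which follows directly from the cited results.
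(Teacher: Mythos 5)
Your proof is correct, but it follows a different route from the paper's. The paper's argument is a one-line application of the standard splitting criterion: by Proposition \ref{lem:UqC}(iv) the restriction of $\eta$ to $\mathcal Z$ is an isomorphism onto $\mathbb F\lbrack z_1,z_2,\ldots\rbrack$, and since $\eta$ is surjective, any subspace mapped isomorphically onto the codomain is a vector-space complement of $\ker\eta$; hence $\mathcal U^+_q = \mathcal Z \oplus \ker\eta$ directly, with no need to pass through $U^+_q \otimes \mathbb F\lbrack z_1,z_2,\ldots\rbrack$. You instead decompose $U^+_q \otimes \mathbb F\lbrack z_1,z_2,\ldots\rbrack$ as $\bigl(\mathbb F 1\otimes \mathbb F\lbrack z_1,z_2,\ldots\rbrack\bigr)\oplus\ker(\vartheta\otimes{\rm id})$ and transport this across the bijection $\varphi$ of Theorem \ref{thm:main1}, identifying the two summands with $\mathcal Z$ (via Lemma \ref{cor:19A}) and $\ker\eta$ (via the commutative diagram of Lemma \ref{lem:diagAgain}). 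Every step checks out: in particular $\ker(\vartheta\otimes{\rm id})=\ker\vartheta\otimes\mathbb F\lbrack z_1,z_2,\ldots\rbrack$ is standard, and the injectivity of $x\mapsto 1\otimes x$ gives $\varphi^{-1}\bigl(\ker(\vartheta\otimes{\rm id})\bigr)=\ker\eta$. What your approach buys is an explicit parallel with the paper's treatment of the kernel of $\eta$ in the preceding proposition, making visible how the decomposition arises from the tensor-product model; what it costs is length, since Proposition \ref{lem:UqC}(iv) already packages exactly the fact needed for the short argument.
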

\begin{proof} 
By Proposition
\ref{lem:UqC}(iv) and linear algebra.
\end{proof}

\section{The automorphism $\sigma$ and antiautomorphism $S$}
\noindent
In Lemma \ref{lem:nAAut}
we gave
an automorphism $\sigma$ of $U^+_q$ 
and an 
 antiautomorphism $S$ of $U^+_q$.
In Lemma
\ref{lem:sym} we gave the
analogous maps for 
  $\mathcal U^+_q$.
In this section we describe how the
maps in Lemmas
\ref{lem:nAAut} and
\ref{lem:sym} are related.

\begin{lemma}
\label{lem:sigmadiag}
The following diagram commutes:
\begin{equation*}
{\begin{CD}
\mathcal U^+_q @>\varphi  >>
          U^+_q \otimes \mathbb F \lbrack z_1,z_2,\ldots \rbrack 
	      \\
         @V \sigma VV                   @VV \sigma \otimes {\rm id} V 
	 \\
\mathcal U^+_q @>>\varphi  >
          U^+_q \otimes \mathbb F \lbrack z_1,z_2,\ldots \rbrack 
			\end{CD}} 
   \end{equation*}
\end{lemma}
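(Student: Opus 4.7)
Both $\varphi \circ \sigma$ and $(\sigma \otimes \mathrm{id}) \circ \varphi$ are algebra homomorphisms from $\mathcal{U}^+_q$ to $U^+_q \otimes \mathbb{F}[z_1, z_2, \ldots]$. Since by Definition \ref{def:calU} the algebra $\mathcal{U}^+_q$ is generated by its alternating generators, it suffices to verify that the two compositions agree on each of the four families
$\mathcal{W}_{-n}$, $\mathcal{W}_{n+1}$, $\mathcal{G}_n$, $\tilde{\mathcal{G}}_n$ for $n \in \mathbb{N}$.

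The plan is to chase each generator around the square, using the description of $\sigma$ on $\mathcal{U}^+_q$ from Lemma \ref{lem:sym}, the description of $\sigma$ on $U^+_q$ from Lemma \ref{lem:nsigSact}(i), and the formulas for $\varphi$ from Lemma \ref{lem:varphi}. For instance, for $\mathcal{W}_{-n}$ we would compute
\begin{align*}
\varphi(\sigma(\mathcal{W}_{-n})) &= \varphi(\mathcal{W}_{n+1}) = \sum_{k=0}^n W_{n+1-k} \otimes z_k,
\\
(\sigma \otimes \mathrm{id})(\varphi(\mathcal{W}_{-n})) &= \sum_{k=0}^n \sigma(W_{k-n}) \otimes z_k = \sum_{k=0}^n W_{n+1-k} \otimes z_k,
\end{align*}
where the last equality uses the fact that $\sigma(W_{-j}) = W_{j+1}$ with $j = n-k \geq 0$. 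The computation for $\mathcal{W}_{n+1}$ is symmetric, and for $\mathcal{G}_n$, $\tilde{\mathcal{G}}_n$ it follows similarly from $\sigma(G_j) = \tilde{G}_j$ and $\sigma(\tilde{G}_j) = G_j$.

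There is no real obstacle: the argument is a routine check on generators, and the only thing to watch is the reindexing of the subscripts, which matches up exactly because the formulas defining $\varphi$ pair $W_{k-n}$ with $W_{n+1-k}$ and $G_{n-k}$ with $\tilde G_{n-k}$, precisely mirroring the action of $\sigma$.
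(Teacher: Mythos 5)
Your proposal is correct and follows the same route as the paper: both verify commutativity by chasing each alternating generator around the square, using Lemma \ref{lem:varphi} for the horizontal maps and Lemmas \ref{lem:nsigSact}, \ref{lem:sym} for the vertical maps. The explicit reindexing check you carry out for $\mathcal W_{-n}$ is exactly the computation the paper leaves as routine.
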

\begin{proof} Each map in the diagram is an algebra homomorphism.
To check that the diagram commutes, it suffices to chase each
alternating generator of $\mathcal U^+_q$
around the diagram. This chasing is routinely 
accomplished using Lemma
 \ref{lem:varphi} for the horizontal maps
 and Lemmas
\ref{lem:nsigSact},
\ref{lem:sym}  for the vertical maps.
\end{proof}

\begin{lemma}
\label{lem:Sdiag}
The following diagram commutes:
\begin{equation*}
{\begin{CD}
\mathcal U^+_q @>\varphi  >>
          U^+_q \otimes \mathbb F \lbrack z_1,z_2,\ldots \rbrack 
	      \\
         @V S VV                   @VV S \otimes {\rm id} V 
	 \\
\mathcal U^+_q @>>\varphi  >
          U^+_q \otimes \mathbb F \lbrack z_1,z_2,\ldots \rbrack 
			\end{CD}} 
   \end{equation*}
\end{lemma}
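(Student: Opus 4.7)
The plan is to mimic the proof of Lemma \ref{lem:sigmadiag}, chasing the alternating generators of $\mathcal U^+_q$ around the diagram. The only new subtlety is that $S$ is an antiautomorphism rather than an automorphism, so some care is needed before invoking an ``agreement on generators'' principle.

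First I would observe that both $\varphi \circ S$ and $(S \otimes \mathrm{id}) \circ \varphi$ are antihomomorphisms from $\mathcal U^+_q$ to $U^+_q \otimes \mathbb F\lbrack z_1,z_2,\ldots\rbrack$. This makes sense because $\mathbb F\lbrack z_1,z_2,\ldots\rbrack$ is commutative, so $\mathrm{id}$ on this factor is both a homomorphism and an antihomomorphism; consequently $S \otimes \mathrm{id}$ is a well-defined antiautomorphism of the tensor product. Viewing both compositions as algebra homomorphisms into the opposite algebra $(U^+_q \otimes \mathbb F\lbrack z_1,z_2,\ldots\rbrack)^{\rm opp}$, it suffices to verify that they agree on a generating set of $\mathcal U^+_q$; the natural choice is the alternating generators (\ref{eq:calWWGGn}).

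Next I would chase each of the four families around the diagram using Lemma \ref{lem:varphi} for the horizontal arrows and Lemmas \ref{lem:nsigSact}(ii), \ref{lem:sym} for the vertical arrows. For the generators $\mathcal W_{-k}$ and $\mathcal W_{k+1}$, both incarnations of $S$ fix the corresponding elements (at both the $\mathcal U^+_q$ and $U^+_q$ levels), so the $\varphi$-images match literally, e.g.\ $(S \otimes \mathrm{id})\varphi(\mathcal W_{-n}) = \sum_{k=0}^n W_{k-n} \otimes z_k = \varphi(\mathcal W_{-n}) = \varphi(S(\mathcal W_{-n}))$. For $\mathcal G_n$ and $\mathcal{\tilde G}_n$, both incarnations of $S$ swap $\mathcal G \leftrightarrow \mathcal{\tilde G}$ and $G \leftrightarrow \tilde G$ respectively, and this swap is performed term-by-term in the sum defining $\varphi$, so the two paths yield the common output $\sum_{k=0}^n \tilde G_{n-k} \otimes z_k$ (respectively $\sum_{k=0}^n G_{n-k} \otimes z_k$).

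The main obstacle, modest as it is, is the conceptual check that ``agreement on generators forces equality'' is still legitimate in the antihomomorphism setting, and that $S \otimes \mathrm{id}$ is meaningful as an antiautomorphism at all. Once those points are settled, the verification reads almost identically to that of Lemma \ref{lem:sigmadiag}, with $\sigma$ replaced by $S$ and with the action of $S$ on $\mathcal W_{-k}, \mathcal W_{k+1}$ being fixation rather than a swap.
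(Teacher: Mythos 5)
Your proposal is correct and follows essentially the same route as the paper: both arguments note that the vertical maps are antiautomorphisms and then chase each alternating generator around the diagram using Lemma \ref{lem:varphi} for the horizontal arrows and Lemmas \ref{lem:nsigSact}, \ref{lem:sym} for the vertical ones. The extra care you take in justifying that agreement on generators suffices for antihomomorphisms (by passing to the opposite algebra) is a point the paper leaves implicit, but it is the same proof.
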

\begin{proof} Each horizontal (resp. vertical)
map in the diagram is an algebra homomorphism (resp. antiautomorphism).
To check that the diagram commutes, it suffices to chase each
alternating generator of $\mathcal U^+_q$
around the diagram. This chasing is routinely 
accomplished using
Lemma \ref{lem:varphi} for the horizontal maps
and 
Lemmas
\ref{lem:nsigSact},
\ref{lem:sym}  for the vertical maps.
\end{proof}

\begin{proposition} 
\label{prop:FIX}
Referring to the algebra $\mathcal U^+_q$,
\begin{enumerate}
\item[\rm (i)] 
the automorphism $\sigma$ fixes everything in $\mathcal Z$;
\item[\rm (ii)] 
the antiautomorphism $S$ fixes everything in $\mathcal Z$.
\end{enumerate}
\end{proposition}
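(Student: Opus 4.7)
The plan is to reduce fixation on all of $\mathcal Z$ to fixation on a generating set, then pass across the isomorphism $\varphi$ where the action becomes transparent.

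By Definition \ref{def:calZ} the subalgebra $\mathcal Z$ is generated by $\{Z^\vee_n\}_{n=1}^\infty$. Since $\sigma$ is an algebra homomorphism and $S$ is an antiautomorphism (both fixing $1$), it suffices to show $\sigma(Z^\vee_n) = Z^\vee_n$ and $S(Z^\vee_n) = Z^\vee_n$ for every $n \geq 1$.

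The key step is to apply $\varphi$ and invoke the commuting diagrams. By Lemma \ref{lem:varphiZvee} we have $\varphi(Z^\vee_n) = 1 \otimes z^\vee_n$. Using the commuting diagram of Lemma \ref{lem:sigmadiag},
\begin{align*}
\varphi\bigl(\sigma(Z^\vee_n)\bigr)
= (\sigma \otimes {\rm id})\bigl(\varphi(Z^\vee_n)\bigr)
= (\sigma \otimes {\rm id})(1 \otimes z^\vee_n)
= \sigma(1) \otimes z^\vee_n
= 1 \otimes z^\vee_n
= \varphi(Z^\vee_n).
\end{align*}
Since $\varphi$ is a bijection by Theorem \ref{thm:main1}, we conclude $\sigma(Z^\vee_n) = Z^\vee_n$. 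The same argument applied to the commuting diagram of Lemma \ref{lem:Sdiag} yields $S(Z^\vee_n) = Z^\vee_n$, using that $S(1) = 1$.

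Once the generators $Z^\vee_n$ are fixed, part (i) follows because $\sigma$ is an algebra homomorphism, so it fixes every polynomial in the $Z^\vee_n$. For part (ii), even though $S$ is an antiautomorphism, it still fixes every monomial in mutually commuting fixed elements (since reversing the product order changes nothing when the factors commute), and $\mathcal Z$ is commutative; hence $S$ fixes all of $\mathcal Z$. There is no real obstacle here: the whole argument rests on having set up $\varphi$ and the two symmetry diagrams, after which the computation is essentially a one-line verification in the target algebra $U^+_q \otimes \mathbb F[z_1,z_2,\ldots]$, where $\sigma \otimes {\rm id}$ and $S \otimes {\rm id}$ act trivially on the central factor $\mathbb F \otimes \mathbb F[z_1,z_2,\ldots]$.
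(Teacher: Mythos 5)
Your proof is correct and follows essentially the same route as the paper: reduce to a generating set of $\mathcal Z$, chase it around the commuting diagrams of Lemmas \ref{lem:sigmadiag} and \ref{lem:Sdiag}, use injectivity of $\varphi$, and invoke the commutativity of $\mathcal Z$ to handle the antiautomorphism $S$. The only (immaterial) difference is that you work with the generators $\lbrace Z^\vee_n\rbrace$ from Definition \ref{def:calZ} together with Lemma \ref{lem:varphiZvee}, whereas the paper uses $\lbrace Z_n\rbrace$ from Lemma \ref{lem:ZcZ} together with Lemma \ref{lem:etaZ}.
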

\begin{proof}(i) By Lemma
\ref{lem:ZcZ}, it suffices to check that 
$\sigma(Z_n)=Z_n$ for $n \geq 1$.
This checking is routinely accomplished
by chasing $Z_n$ around the diagram in Lemma
\ref{lem:sigmadiag}, using the fact that $\varphi$ sends
$Z_n\mapsto 1\otimes z_n$
by Lemma
\ref{lem:etaZ},
and $\sigma \otimes {\rm id}$ sends $1 \otimes z_n \mapsto 1 \otimes z_n$ 
by construction.
\\
\noindent (ii) 
 By Lemma \ref{lem:ZcZ} and since 
 $\mathcal Z$ is commutative, it suffices to check that 
$S(Z_n)=Z_n$ for $n \geq 1$.
This checking is routinely accomplished
by chasing $Z_n$ around the diagram in Lemma
\ref{lem:Sdiag}, using the fact that $\varphi$ sends
$Z_n\mapsto 1\otimes z_n$
and $S \otimes {\rm id}$ sends $1 \otimes z_n \mapsto 1 \otimes z_n$ 
by construction.
\end{proof}

\begin{corollary} 
\label{cor:4same}
Referring to the algebra $\mathcal U^+_q$,
for $n\geq 1$ the element $Z^\vee_n$ is equal to each of the
following:
\begin{align}
&
\sum_{k=0}^n  \mathcal G_{k} \mathcal{ \tilde G}_{n-k} q^{n-2k}
- q
\sum_{k=0}^{n-1} \mathcal W_{-k} \mathcal W_{n-k} q^{n-1-2k},
\label{eq:SnGGWW1}
\\
&
\sum_{k=0}^n \mathcal G_{k}  \mathcal {\tilde G}_{n-k} q^{2k-n}
- q
\sum_{k=0}^{n-1} \mathcal W_{n-k} \mathcal W_{-k} q^{n-1-2k},
\label{eq:SnGGWW4}
\\
&
\sum_{k=0}^n \mathcal {\tilde G}_{k}   \mathcal G_{n-k} q^{n-2k}
- q
\sum_{k=0}^{n-1} \mathcal W_{n-k} \mathcal W_{-k} q^{2k+1-n},
\label{eq:SnGGWW2}
\\
&
\sum_{k=0}^n \mathcal {\tilde G}_{k}  \mathcal G_{n-k} q^{2k-n}
- q
\sum_{k=0}^{n-1} \mathcal W_{-k}  \mathcal W_{n-k} q^{2k+1-n}.
\label{eq:SnGGWW3}
\end{align}
\end{corollary}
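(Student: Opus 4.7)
The first expression (\ref{eq:SnGGWW1}) is the definition of $Z^\vee_n$ from Definition \ref{def:Zvee}, so there is nothing to prove for it. The remaining three equalities will follow by exploiting the automorphism $\sigma$ and antiautomorphism $S$ from Lemma \ref{lem:sym}, together with the key fact from Proposition \ref{prop:FIX} that both $\sigma$ and $S$ fix every element of $\mathcal Z$; in particular each fixes $Z^\vee_n$.

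The plan is to apply $S$, $\sigma$, and their composition to the defining expression (\ref{eq:SnGGWW1}) of $Z^\vee_n$ and reindex. For (\ref{eq:SnGGWW4}), I would apply $S$: since $S$ is an antiautomorphism with $S(\mathcal G_k)=\mathcal{\tilde G}_k$, $S(\mathcal{\tilde G}_k)=\mathcal G_k$, and $S$ fixing each $\mathcal W_{-k}$ and $\mathcal W_{k+1}$, applying $S$ to (\ref{eq:SnGGWW1}) yields
\[
Z^\vee_n = \sum_{k=0}^n \mathcal G_{n-k}\mathcal{\tilde G}_{k} q^{n-2k} - q\sum_{k=0}^{n-1} \mathcal W_{n-k}\mathcal W_{-k}q^{n-1-2k},
\]
and the substitution $k \mapsto n-k$ in the first sum gives (\ref{eq:SnGGWW4}). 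For (\ref{eq:SnGGWW2}), I would apply $\sigma$: noting that $\sigma$ is an algebra automorphism sending $\mathcal G_k \leftrightarrow \mathcal{\tilde G}_k$ and $\mathcal W_m \mapsto \mathcal W_{1-m}$ (this uniform formula is readily verified from Lemma \ref{lem:sym}), applying $\sigma$ to (\ref{eq:SnGGWW1}) and then substituting $k \mapsto n-1-k$ in the $\mathcal W$-sum produces (\ref{eq:SnGGWW2}). For (\ref{eq:SnGGWW3}), I would apply $S$ to the expression (\ref{eq:SnGGWW2}) just obtained (equivalently, apply the composition $S\circ \sigma$ to the original), and reindex the $\mathcal G\mathcal{\tilde G}$-sum by $k \mapsto n-k$.

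There is no serious obstacle; the argument is a straightforward bookkeeping exercise with the index substitutions. The only mild subtlety is the uniform description $\sigma(\mathcal W_m)=\mathcal W_{1-m}$ for all integers $m$, which lets the two halves of Lemma \ref{lem:sym}(i) be handled in one stroke when the summation contains both $\mathcal W_{-k}$ (with $k\geq 0$) and $\mathcal W_{n-k}$ (with $n-k\geq 1$). With this in hand, each of (\ref{eq:SnGGWW4}), (\ref{eq:SnGGWW2}), (\ref{eq:SnGGWW3}) drops out immediately by comparing the result of $S$, $\sigma$, $S\sigma$ (all equal to $Z^\vee_n$) with the stated expression after reindexing.
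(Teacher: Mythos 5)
Your proof is correct and follows essentially the same route as the paper: both identify (\ref{eq:SnGGWW1}) with the definition of $Z^\vee_n$, invoke Proposition \ref{prop:FIX} to see that $\sigma$ and $S$ fix $Z^\vee_n$, and then track how $\sigma$ and $S$ permute the four displayed expressions (your pairings $S:(\ref{eq:SnGGWW1})\leftrightarrow(\ref{eq:SnGGWW4})$, $\sigma:(\ref{eq:SnGGWW1})\leftrightarrow(\ref{eq:SnGGWW2})$, $S:(\ref{eq:SnGGWW2})\leftrightarrow(\ref{eq:SnGGWW3})$ agree with the paper's). The index substitutions and the uniform rule $\sigma(\mathcal W_m)=\mathcal W_{1-m}$ all check out.
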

\begin{proof} 
By Definition
\ref{def:Zvee}
the element $Z^\vee_n$ is equal to
the element (\ref{eq:SnGGWW1}).
By Proposition
\ref{prop:FIX} the element
$Z^\vee_n$ is fixed by
$\sigma$ and $S$.
By Lemma
\ref{lem:sym} the map
$\sigma$ sends the elements 
\begin{align*}
(\ref{eq:SnGGWW1}) \leftrightarrow (\ref{eq:SnGGWW2}),
\qquad \qquad
(\ref{eq:SnGGWW4}) \leftrightarrow (\ref{eq:SnGGWW3})
\end{align*}
and $S$ sends the elements
\begin{align*}
(\ref{eq:SnGGWW1}) \leftrightarrow (\ref{eq:SnGGWW4}),
\qquad \qquad
(\ref{eq:SnGGWW2}) \leftrightarrow (\ref{eq:SnGGWW3}).
\end{align*}
\noindent The result follows.
\end{proof}

\noindent 
It is illuminating to 
compare
 Lemma
\ref{prop:nGGWW} with Corollary
\ref{cor:4same}.

\section{The grading for $\mathcal U^+_q$}
In Lemma 
\ref{lem:calgrade}
we introduced an $(\mathbb N \times \mathbb N)$-grading for 
$\mathcal U^+_q$. In this section we compute the dimension
of each homogeneous component, and express the answer using
a generating function. Throughout this section let $\lambda$,
$\mu$ denote commuting indeterminates.
\medskip

\noindent 
We start with some comments about the 
$(\mathbb N \times \mathbb N)$-grading for $U^+_q$. This grading
was mentioned below
Lemma
\ref{lem:nAAut}
and described further in
Lemma
\ref{lem:althom}.

\begin{definition}
\label{def:gfuq}
\rm Define a generating function
\begin{align}
H(\lambda,\mu) = 
\prod_{n=1}^\infty 
\frac{1}{1-\lambda^{n} \mu^{n-1}}\,
\frac{1}{1-\lambda^{n} \mu^{n}}\,
\frac{1}{1-\lambda^{n-1} \mu^{n}}.
\label{eq:gfH}
\end{align}
\end{definition}
\begin{note}\rm 
In the product 
(\ref{eq:gfH}) 
we expand each factor using
 $(1-x)^{-1} = 1+ x + x^2 + \cdots$ to express
$H(\lambda, \mu)$ as a formal power series in $\lambda, \mu$.
We will do something similar for all the generating functions
encountered in this section.
\end{note}

\begin{lemma} 
\label{lem:ugrad}
{\rm (See \cite[Corollary~3.7]{alternating}.)}
For $(i,j) \in \mathbb N\times \mathbb N$ the following are the
same:
\begin{enumerate}
\item[\rm (i)] the dimension of the $(i,j)$-homogeneous component of
$U^+_q$;
\item[\rm (ii)] the coefficient of $\lambda^i \mu^j$
in $H(\lambda, \mu)$.
\end{enumerate}
\end{lemma}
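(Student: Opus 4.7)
The plan is to combine the alternating PBW basis for $U^+_q$ from \cite[Theorem~10.1]{alternating} (cited in the introduction of the present paper) with the bi-degrees recorded in Lemma~\ref{lem:althom}, and then read off the resulting generating function.

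First, I would invoke the alternating PBW basis: the ordered list
$$\{W_{-k}\}_{k\in\mathbb{N}},\qquad \{\tilde G_{k+1}\}_{k\in\mathbb{N}},\qquad \{W_{k+1}\}_{k\in\mathbb{N}}$$
is a PBW basis for $U^+_q$. Since by Lemma~\ref{lem:althom} every element of this generating list is homogeneous in the $(\mathbb{N}\times\mathbb{N})$-grading, each ordered PBW monomial is also homogeneous, with bi-degree equal to the sum (with multiplicity) of the bi-degrees of its factors.

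Next, I would observe that the dimension of the $(i,j)$-homogeneous component of $U^+_q$ equals the number of ordered PBW monomials of total bi-degree $(i,j)$. The bivariate generating function counting these monomials factors as a product over the PBW generators: a generator of bi-degree $(a,c)$ contributes $\sum_{m\geq 0}\lambda^{ma}\mu^{mc}=(1-\lambda^{a}\mu^{c})^{-1}$, since each generator may appear with any multiplicity $m\geq 0$ in an ordered monomial. Reading the bi-degrees off Lemma~\ref{lem:althom}: for $k\in\mathbb{N}$, the generator $W_{-k}$ contributes $(1-\lambda^{k+1}\mu^{k})^{-1}$, the generator $\tilde G_{k+1}$ contributes $(1-\lambda^{k+1}\mu^{k+1})^{-1}$, and the generator $W_{k+1}$ contributes $(1-\lambda^{k}\mu^{k+1})^{-1}$. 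Reindexing by $n=k+1$ produces exactly the three infinite products appearing in the definition of $H(\lambda,\mu)$.

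The main obstacle is the input itself: the existence of a PBW basis consisting of homogeneous alternating elements, which is a substantive result from \cite{alternating}. Given that ingredient, the remainder is routine generating-function bookkeeping; one could equally well use any other homogeneous PBW basis for $U^+_q$ (for instance one built from real and imaginary root vectors of $\widehat{\mathfrak{sl}}_2$), since the dimensions of the homogeneous components are an invariant and must agree with the product formula above.
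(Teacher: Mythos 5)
The paper itself offers no proof of this lemma: it is quoted directly from \cite[Corollary~3.7]{alternating}, so there is nothing internal to compare your argument against. Your derivation is the standard one and the generating-function bookkeeping is correct: given a PBW basis whose elements are homogeneous, an ordered monomial is determined by the multiset of factors, so the Hilbert series factors as $\prod (1-\lambda^a\mu^c)^{-1}$ over the generators, and substituting the bidegrees $(k+1,k)$, $(k+1,k+1)$, $(k,k+1)$ from Lemma~\ref{lem:althom} and reindexing $n=k+1$ reproduces $H(\lambda,\mu)$ exactly. (For each fixed $(i,j)$ only finitely many generators have degree bounded by $(i,j)$, so the coefficient extraction is legitimate.)

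The one point you should be careful about is a possible circularity in your choice of input. You feed in the alternating PBW basis of \cite[Theorem~10.1]{alternating}, but that theorem is itself most naturally proved by the method the present paper uses for $\mathcal U^+_q$ in Theorem~\ref{thm:pbwcal}: first show the ordered alternating monomials span, then conclude linear independence by comparing the count of monomials in each bidegree against an \emph{already known} dimension formula for the homogeneous components --- i.e.\ against the very statement you are trying to prove. If \cite{alternating} proceeds that way (and its Corollary~3.7 sits in Section~3, well before the alternating elements appear in Section~5, which suggests it does), then deriving the dimension formula from the alternating PBW basis runs the logic backwards. Your parenthetical remark at the end is the correct escape: the non-circular input is a homogeneous PBW basis established independently of the dimension count, namely Damiani's basis of root vectors $\lbrace E_{n\delta+\alpha_0}\rbrace$, $\lbrace E_{n\delta+\alpha_1}\rbrace$, $\lbrace E_{n\delta}\rbrace$ with bidegrees $(n+1,n)$, $(n,n+1)$, $(n,n)$, which yields the same three infinite products. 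I would make that the primary argument rather than an afterthought.
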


\noindent Recall the algebra
$\mathbb F \lbrack z_1, z_2,\ldots \rbrack$
from Definition
\ref{def:polyalg}. Shortly we will endow this algebra
with an 
$(\mathbb N \times \mathbb N)$-grading.
The grading is motivated by the following result
concerning 
 the $(\mathbb N \times \mathbb N)$-grading
of $\mathcal U^+_q$.

\begin{lemma}
\label{lem:znn}
For $n\geq 1$ the elements $Z_n, Z^\vee_n$ are homogeneous
with degree $(n,n)$.
\end{lemma}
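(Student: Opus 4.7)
The plan is to prove homogeneity of $Z^\vee_n$ directly from Definition \ref{def:Zvee} and the degree table in Lemma \ref{lem:calgrade}, and then deduce homogeneity of $Z_n$ by induction on $n$ using the recursion (\ref{eq:Znsol}).

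For $Z^\vee_n$, I would simply compute the degree of each summand in
$$Z^\vee_n = \sum_{k=0}^n \mathcal{G}_k \mathcal{\tilde G}_{n-k} q^{n-2k} - q\sum_{k=0}^{n-1} \mathcal{W}_{-k}\mathcal{W}_{n-k} q^{n-1-2k}.$$
In the first sum, $\mathcal{G}_k$ has degree $(k,k)$ and $\mathcal{\tilde G}_{n-k}$ has degree $(n-k,n-k)$ (using the convention $\mathcal{G}_0 = \mathcal{\tilde G}_0 = 1$, which is homogeneous of degree $(0,0)$), so each product has degree $(n,n)$. In the second sum, for $0 \leq k \leq n-1$, the generator $\mathcal{W}_{-k}$ has degree $(k+1,k)$ and $\mathcal{W}_{n-k}$ (with $n-k \geq 1$, so writing $n-k = (n-k-1)+1$) has degree $(n-k-1,n-k)$, giving the product degree $(n,n)$. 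Since each term is homogeneous of degree $(n,n)$, so is $Z^\vee_n$.

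For $Z_n$ I would use induction on $n$. The base case $n=1$ is immediate: (\ref{eq:Znsol}) gives $Z_1 = Z^\vee_1/(q+q^{-1})$, which is homogeneous of degree $(1,1)$ by what we just showed. For the inductive step, assume $Z_k$ is homogeneous of degree $(k,k)$ for all $1 \leq k \leq n-1$. Then in
$$Z_n = \frac{Z^\vee_n - \sum_{k=1}^{n-1} Z_k Z_{n-k} q^{n-2k}}{q^n+q^{-n}},$$
each product $Z_k Z_{n-k}$ has degree $(k,k)+(n-k,n-k) = (n,n)$ by the inductive hypothesis, and $Z^\vee_n$ has degree $(n,n)$; dividing by the nonzero scalar $q^n+q^{-n}$ preserves homogeneity, so $Z_n$ has degree $(n,n)$.

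There is no substantive obstacle here: the whole argument is a direct degree count once the grading of Lemma \ref{lem:calgrade} is in hand, and the only mild care needed is to handle the boundary terms $\mathcal{G}_0$ and $\mathcal{\tilde G}_0$ in the first sum and to verify that the index range in the second sum keeps every $\mathcal{W}_{n-k}$ in the ``positive'' family so that its degree is correctly $(n-k-1,n-k)$.
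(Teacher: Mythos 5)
Your proof is correct and follows exactly the route the paper indicates: a direct degree count for $Z^\vee_n$ using Definition \ref{def:Zvee} and the table in Lemma \ref{lem:calgrade}, followed by induction on $n$ via the recursion (\ref{eq:Znsol}) for $Z_n$. The paper's own proof is just a one-line citation of these ingredients, and your write-up supplies the same argument with the details made explicit.
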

\begin{proof} For 
$Z^\vee_n$ use 
Lemma
\ref{lem:calgrade} and
Definition
\ref{def:Zvee}. For 
$Z_n$ use
(\ref{eq:Znsol}) 
and induction on $n$.
\end{proof}

\begin{definition} 
\label{lem:pgrading}
We endow the algebra
$\mathbb F \lbrack z_1, z_2,\ldots \rbrack$
with an
$(\mathbb N \times \mathbb N)$-grading such that
 $z_n$ is homogeneous with degree $(n,n)$ for $n\geq 1$.
\end{definition}

\begin{definition}
\label{def:genfuncZ}
\rm Define a generating function
\begin{align*}
Z(\lambda,\mu) = 
\prod_{n=1}^\infty 
\frac{1}{1-\lambda^{n} \mu^{n}}
\end{align*}
\end{definition}

\begin{lemma}
\label{lem:zgrad}
For $(i,j) \in \mathbb N\times \mathbb N$ the following are the
same:
\begin{enumerate}
\item[\rm (i)] the dimension of the $(i,j)$-homogeneous component of
$\mathbb F \lbrack z_1, z_2,\ldots \rbrack$;
\item[\rm (ii)] the coefficient of
$\lambda^i \mu^j$ in $Z(\lambda,\mu)$.
\end{enumerate}
\end{lemma}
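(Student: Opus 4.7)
The plan is to identify both quantities (i) and (ii) with the number of monomials in the generators $z_1, z_2, \ldots$ that have bidegree $(i,j)$ under the grading of Definition \ref{lem:pgrading}, and observe that these counts agree.

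First I would use that the standard monomial basis $\{z_1^{a_1} z_2^{a_2} \cdots : (a_n)_{n=1}^\infty \in \mathbb{N}^\infty, \text{ finitely many } a_n \neq 0\}$ is an $\mathbb{F}$-basis for $\mathbb{F}\lbrack z_1, z_2, \ldots \rbrack$, since the $z_n$ are algebraically independent commuting indeterminates. By Definition \ref{lem:pgrading} each $z_n$ has bidegree $(n,n)$, so such a monomial is homogeneous of bidegree $\bigl(\sum_{n\geq 1} n a_n,\ \sum_{n \geq 1} n a_n\bigr)$. Hence the dimension of the $(i,j)$-homogeneous component equals the number of sequences $(a_n)_{n=1}^\infty$ of nonnegative integers, almost all zero, satisfying $\sum n a_n = i$ and $\sum n a_n = j$. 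This number is $0$ unless $i = j$, in which case it is the number $p(i)$ of partitions of $i$.

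Next I would expand $Z(\lambda,\mu)$ as a formal power series via
\begin{align*}
Z(\lambda,\mu) = \prod_{n=1}^\infty \frac{1}{1-\lambda^n \mu^n} = \prod_{n=1}^\infty \sum_{a_n \geq 0} \lambda^{n a_n} \mu^{n a_n}.
\end{align*}
Distributing this product, the coefficient of $\lambda^i \mu^j$ is precisely the number of sequences $(a_n)_{n=1}^\infty$ of nonnegative integers, almost all zero, for which $\sum n a_n = i$ and $\sum n a_n = j$, matching the count from the previous paragraph. Note that the infinite product makes sense as a formal power series, since the coefficient of any fixed monomial $\lambda^i \mu^j$ is determined by the factors indexed by $n \leq \min(i,j)$.

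There is no real obstacle here: the argument is a direct combinatorial unwinding of the monomial basis and the geometric series expansion. The only subtlety worth flagging is the convention that the product defining $Z(\lambda,\mu)$ is interpreted in $\mathbb{F}\lbrack\!\lbrack \lambda, \mu\rbrack\!\rbrack$, parallel to the convention indicated for $H(\lambda,\mu)$ in the note following Definition \ref{def:gfuq}.
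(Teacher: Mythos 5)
Your proof is correct and is exactly the routine verification that the paper omits (its proof of this lemma reads only ``This is routinely checked''). The combinatorial identification of both quantities with the partition count $p(i)\,\delta_{ij}$ via the monomial basis and the geometric-series expansion is the intended argument.
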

\begin{proof} This is routinely checked.
\end{proof}

\noindent We have been discussing an 
$(\mathbb N\times \mathbb N)$-grading of $U^+_q$
and
an $(\mathbb N\times \mathbb N)$-grading of 
$\mathbb F \lbrack z_1, z_2,\ldots \rbrack$. We now combine
these gradings to get an 
 $(\mathbb N\times \mathbb N)$-grading of 
$U^+_q \otimes \mathbb F \lbrack z_1, z_2,\ldots \rbrack$.

\begin{lemma} 
\label{lem:fg}
The algebra
$U^+_q \otimes \mathbb F \lbrack z_1, z_2,\ldots \rbrack$
has a unique 
$(\mathbb N\times \mathbb N)$-grading with the following property:
for all homogeneous elements $f \in U^+_q$ and $g \in 
\mathbb F \lbrack z_1, z_2,\ldots \rbrack$, the element $f\otimes g \in 
U^+_q \otimes \mathbb F \lbrack z_1, z_2,\ldots \rbrack$
is homogeneous with 
${\rm deg}(f \otimes g) = {\rm deg}(f)+ {\rm deg}(g)$.
With respect to this grading each of
$W_0\otimes 1$,
$W_1\otimes 1$,
$\lbrace 1 \otimes z_n\rbrace_{n=1}^\infty$
is homogeneous with degree shown below:
\bigskip

\centerline{
\begin{tabular}[t]{c|c}
{\rm  element} & {\rm degree} 
   \\  \hline
   $ W_0 \otimes 1 $ & $(1,0)$
	 \\
   $ W_1 \otimes 1$ & $(0,1)$
          \\
   $ 1\otimes z_n$ & $(n,n)$
	 \end{tabular}
              }
              \medskip
\end{lemma}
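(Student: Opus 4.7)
The plan is to construct the grading explicitly using the existing gradings on the two tensor factors, verify that it is the only grading with the required property, and then read off the degrees of the specific elements from Lemma \ref{lem:althom} and Definition \ref{lem:pgrading}.

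For existence, I would define, for each $(i,j) \in \mathbb N \times \mathbb N$, the subspace
\[
R_{(i,j)} = \sum_{(i_1,j_1)+(i_2,j_2)=(i,j)} U^+_{q,(i_1,j_1)} \otimes \mathbb F\lbrack z_1, z_2,\ldots\rbrack_{(i_2,j_2)},
\]
using the gradings from Lemma \ref{lem:althom} (with the comments below Lemma \ref{lem:nAAut}) and Definition \ref{lem:pgrading}. Since tensor product distributes over direct sums, we get $U^+_q \otimes \mathbb F\lbrack z_1,z_2,\ldots\rbrack = \bigoplus_{(i,j)} R_{(i,j)}$. Compatibility with multiplication, $R_{(i,j)} R_{(k,\ell)} \subseteq R_{(i+k,j+\ell)}$, follows from the grading properties of each factor together with the rule $(f \otimes g)(f' \otimes g') = ff' \otimes gg'$ for multiplication in the tensor product algebra. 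The defining property of the grading (that $f\otimes g$ is homogeneous of degree ${\rm deg}(f)+{\rm deg}(g)$ when $f,g$ are homogeneous) is immediate from the construction.

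For uniqueness, suppose $\{R'_{(i,j)}\}$ is another $(\mathbb N \times \mathbb N)$-grading with the stated property. Each $R_{(i,j)}$ is spanned by the pure tensors $f \otimes g$ with $f \in U^+_q$ homogeneous, $g \in \mathbb F\lbrack z_1,z_2,\ldots\rbrack$ homogeneous, and ${\rm deg}(f)+{\rm deg}(g)=(i,j)$; but by hypothesis each such pure tensor lies in $R'_{(i,j)}$. Hence $R_{(i,j)} \subseteq R'_{(i,j)}$ for all $(i,j)$, and since both families give direct sum decompositions of the same algebra, they must agree.

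For the degree table, apply ${\rm deg}(f \otimes g) = {\rm deg}(f) + {\rm deg}(g)$: by Lemma \ref{lem:althom}, $W_0 = A$ has degree $(1,0)$ and $W_1 = B$ has degree $(0,1)$ in $U^+_q$, while $1$ has degree $(0,0)$, giving $W_0 \otimes 1$ degree $(1,0)$ and $W_1 \otimes 1$ degree $(0,1)$; by Definition \ref{lem:pgrading}, $z_n$ has degree $(n,n)$, giving $1\otimes z_n$ degree $(n,n)$. There is no real obstacle here — the argument is a routine exercise on tensor products of graded algebras — so the only thing to watch is the bookkeeping in the direct sum decomposition to ensure uniqueness is stated cleanly.
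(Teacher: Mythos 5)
Your proof is correct and is just the standard tensor-product-of-graded-algebras construction; the paper's own proof consists of the single phrase ``By construction,'' so you have simply filled in the routine details it leaves implicit. Nothing to change.
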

\begin{proof} By construction.
\end{proof}

\begin{definition}
\label{def:calH}
\rm Define a generating function
\begin{align*}
\mathcal H(\lambda, \mu) &= H(\lambda, \mu)Z(\lambda,\mu)
\\
&= 
\prod_{n=1}^\infty 
\frac{1}{1-\lambda^{n} \mu^{n-1}}\,
\frac{1}{(1-\lambda^{n} \mu^{n})^2}\,
\frac{1}{1-\lambda^{n-1} \mu^{n}}.
\end{align*}
\end{definition}

\begin{lemma}
\label{lem:dimHcal}
For $(i,j) \in \mathbb N\times \mathbb N$ the following are the
same:
\begin{enumerate}
\item[\rm (i)] the dimension of the $(i,j)$-homogeneous component of
$U^+_q \otimes \mathbb F \lbrack z_1, z_2,\ldots \rbrack$;
\item[\rm (ii)] the coefficient of
$\lambda^i \mu^j$ in $\mathcal H(\lambda, \mu)$.
\end{enumerate}
\end{lemma}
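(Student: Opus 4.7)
The plan is to reduce this directly to Lemmas \ref{lem:ugrad} and \ref{lem:zgrad} via the product formula for Hilbert/Poincar\'e-type series of a tensor product. First I would unwind Lemma \ref{lem:fg}: by construction, for each $(i,j)\in \mathbb N\times\mathbb N$ the $(i,j)$-homogeneous component of $U^+_q \otimes \mathbb F\lbrack z_1,z_2,\ldots\rbrack$ decomposes as the direct sum
\[
\bigoplus_{\substack{(a,b),(c,d)\in \mathbb N\times \mathbb N\\(a+c,\,b+d)=(i,j)}} U^+_q(a,b)\,\otimes\, \mathbb F\lbrack z_1,z_2,\ldots\rbrack(c,d),
\]
where $U^+_q(a,b)$ and $\mathbb F\lbrack z_1,z_2,\ldots\rbrack(c,d)$ denote the respective homogeneous components.

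Taking dimensions term by term, and using that $\dim(V\otimes W)=\dim(V)\dim(W)$ for finite-dimensional $\mathbb F$-vector spaces, the dimension in question equals the convolution sum
\[
\sum_{\substack{(a,b),(c,d)\\ (a+c,\,b+d)=(i,j)}} \dim U^+_q(a,b)\,\cdot\,\dim \mathbb F\lbrack z_1,z_2,\ldots\rbrack(c,d).
\]
Here finite-dimensionality of each homogeneous component is guaranteed by Lemmas \ref{lem:ugrad} and \ref{lem:zgrad}, since the respective generating functions $H(\lambda,\mu)$ and $Z(\lambda,\mu)$ have well-defined (finite) coefficients in each bidegree.

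Next I would observe that the above convolution sum is, by definition of the Cauchy product of formal power series in $\lambda, \mu$, precisely the coefficient of $\lambda^i \mu^j$ in the product $H(\lambda,\mu)\,Z(\lambda,\mu)$. By Definition \ref{def:calH} this product is $\mathcal H(\lambda,\mu)$, completing the proof. There is no real obstacle here; the only thing to check carefully is that the bigrading on the tensor product from Lemma \ref{lem:fg} really does split as claimed, but this is immediate from the uniqueness part of that lemma together with the fact that elementary tensors of homogeneous elements span the tensor product.
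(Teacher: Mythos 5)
Your proof is correct and is essentially the paper's argument: the paper's proof of this lemma is a one-line citation of Lemmas \ref{lem:ugrad}, \ref{lem:zgrad} and Definition \ref{def:calH}, and your write-up simply supplies the routine details (the direct-sum decomposition of the tensor-product grading, multiplicativity of dimension, and the Cauchy product) that the paper leaves implicit.
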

\begin{proof} By Lemmas
\ref{lem:ugrad},
\ref{lem:zgrad}
and Definition \ref{def:calH}.
\end{proof}

\noindent Recall the isomorphism 
$\phi:
U^+_q \otimes \mathbb F \lbrack z_1, z_2,\ldots \rbrack
\to \mathcal U^+_q$ from Lemma
\ref{lem:nphi}
and Theorem
\ref{thm:main1}.

\begin{lemma}
\label{lem:phiresp}
For $ (i,j) \in \mathbb N\times \mathbb N$
the isomorphsim $\phi$ sends the
$ (i,j)$-homogeneous component of 
$U^+_q \otimes \mathbb F \lbrack z_1, z_2,\ldots \rbrack$
to the 
$ (i,j)$-homogeneous component of $\mathcal U^+_q$.
\end{lemma}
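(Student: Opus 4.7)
The plan is to reduce the statement to checking the grading condition on a generating set, since $\phi$ is an algebra homomorphism and both source and target carry $(\mathbb N\times\mathbb N)$-gradings compatible with multiplication. If an algebra homomorphism between graded algebras sends some generating set into homogeneous elements of matching bidegrees, then it must send each homogeneous component into the homogeneous component of the same bidegree, because every element of the source is a (finite) $\mathbb F$-linear combination of monomials in the generators, and $\phi$ turns such a monomial into a product of homogeneous elements in the target whose degrees sum to the same bidegree.

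First I would recall from Lemma \ref{lem:fg} the generating set
\begin{align*}
W_0\otimes 1,\qquad W_1\otimes 1,\qquad \lbrace 1\otimes z_n\rbrace_{n=1}^\infty
\end{align*}
of $U^+_q \otimes \mathbb F\lbrack z_1,z_2,\ldots \rbrack$, together with the bidegrees $(1,0)$, $(0,1)$, $(n,n)$ respectively. Then I would invoke Lemma \ref{lem:nphi} to recall how $\phi$ acts on these generators: $W_0\otimes 1 \mapsto \mathcal W_0$, $W_1\otimes 1 \mapsto \mathcal W_1$, $1\otimes z_n \mapsto Z_n$.

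Next I would match the bidegrees in $\mathcal U^+_q$. By Lemma \ref{lem:calgrade}, $\mathcal W_0$ and $\mathcal W_1$ are homogeneous of bidegrees $(1,0)$ and $(0,1)$. By Lemma \ref{lem:znn}, $Z_n$ is homogeneous of bidegree $(n,n)$. Thus $\phi$ sends each generator of $U^+_q \otimes \mathbb F\lbrack z_1,z_2,\ldots \rbrack$ to a homogeneous element of $\mathcal U^+_q$ of the same bidegree.

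Finally I would argue the general implication: an arbitrary element of the $(i,j)$-homogeneous component of $U^+_q\otimes \mathbb F\lbrack z_1,z_2,\ldots\rbrack$ is a linear combination of monomials in the above generators whose total bidegrees sum to $(i,j)$; applying $\phi$ gives the corresponding linear combination of products in $\mathcal U^+_q$, each of bidegree $(i,j)$ by Lemma \ref{lem:calgrade}. Since no step is delicate, the main (and only) thing to be careful about is that the ``generators'' used really do generate; this is handled by the explicit construction of $\phi$ via these generators in Lemma \ref{lem:nphi}, so there is no real obstacle.
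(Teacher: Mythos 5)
Your proposal is correct and follows essentially the same route as the paper, whose proof is just a citation of Lemmas \ref{lem:calgrade}, \ref{lem:nphi}, \ref{lem:znn}, \ref{lem:fg}; you have simply written out the standard check-on-generators argument those citations encode. (The only unstated point is that ``into'' each component upgrades to ``onto'' because $\phi$ is bijective and both algebras are direct sums of their homogeneous components, which is immediate.)
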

\begin{proof}
Use 
Lemmas
\ref{lem:calgrade},
\ref{lem:nphi},
\ref{lem:znn},
\ref{lem:fg}.
\end{proof}

\begin{proposition}
\label{prop:Udim}
For $(i,j) \in \mathbb N\times \mathbb N$ the following are the
same:
\begin{enumerate}
\item[\rm (i)] the dimension of the $(i,j)$-homogeneous component of
$\mathcal U^+_q$;
\item[\rm (ii)] the coefficient of
$\lambda^i \mu^j$ in $\mathcal H(\lambda, \mu)$.
\end{enumerate}
\end{proposition}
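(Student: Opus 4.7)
The plan is to observe that all the work has already been done in the preceding lemmas, and the proposition is essentially a transport of the result in Lemma \ref{lem:dimHcal} across the isomorphism $\phi$.

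First I would invoke Theorem \ref{thm:main1} to recall that $\phi: U^+_q \otimes \mathbb{F}\lbrack z_1,z_2,\ldots \rbrack \to \mathcal{U}^+_q$ is a vector space isomorphism, and then invoke Lemma \ref{lem:phiresp} to upgrade this to a graded statement: $\phi$ maps the $(i,j)$-homogeneous component of the source into the $(i,j)$-homogeneous component of the target. Combining these with the direct sum decomposition of each side into its homogeneous components (and the uniqueness of this decomposition), the restriction of $\phi$ to each $(i,j)$-homogeneous component is itself a bijection onto the corresponding $(i,j)$-homogeneous component of $\mathcal{U}^+_q$; indeed, injectivity is immediate from injectivity of $\phi$, while surjectivity follows by decomposing any preimage and comparing components degree by degree.

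Consequently the $(i,j)$-homogeneous components of $\mathcal{U}^+_q$ and $U^+_q \otimes \mathbb{F}\lbrack z_1,z_2,\ldots \rbrack$ have the same dimension. I would then appeal to Lemma \ref{lem:dimHcal} to identify this common dimension with the coefficient of $\lambda^i \mu^j$ in $\mathcal{H}(\lambda,\mu)$, finishing the proof.

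There is no real obstacle here: the heavy lifting was done in Theorem \ref{thm:main1} (establishing the isomorphism) and in Lemma \ref{lem:phiresp} (establishing that $\phi$ respects the grading). The only minor point to mention explicitly is that a graded vector space isomorphism restricts to a bijection on each homogeneous component, which is a standard direct-sum argument. Thus the writeup should be quite short, essentially a one-line citation of the three preparatory results.
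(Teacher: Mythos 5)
Your proposal is correct and follows exactly the route of the paper, which proves the proposition by citing Lemma \ref{lem:dimHcal} together with Lemma \ref{lem:phiresp} (with the bijectivity of $\phi$ from Theorem \ref{thm:main1} implicit). The extra remark about a graded isomorphism restricting to a bijection on each homogeneous component is the same standard point the paper leaves unstated.
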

\begin{proof} By Lemmas
\ref{lem:dimHcal},
\ref{lem:phiresp}.
\end{proof}

\begin{example}\rm
\label{ex:Ctabledij}
\rm
For $0 \leq i,j\leq 6$ the dimension of the
$(i,j)$-homogeneous component of $\mathcal U^+_q$
is given in the $(i,j)$-entry of the matrix below:
\begin{align*}
	\left(
         \begin{array}{ccccccc}
               1&1&1&1&1&1&1 
	       \\
               1&3&4&4&4&4&4 
               \\
	       1&4&10&13&14&14&14 
              \\
	      1&4&13&27&36&39&40 
             \\
	     1&4&14&36&69&91&101 
            \\
	    1&4&14&39&91&161&213
	    \\
	    1&4&14&40&101&213&361
                  \end{array}
\right)
\end{align*}
\end{example}

\section{A PBW basis for $\mathcal U^+_q$}
In this section we obtain a 
PBW basis for $\mathcal U^+_q$.
First we clarify our terms.

\begin{definition} 
\label{def:PBWcal}
\rm (See  \cite[p.~299]{damiani}.) 
Let $\mathcal A$ denote an algebra.
     A {\it Poincar\'e-Birkhoff-Witt} (or {\it PBW})
   basis for $\mathcal A$ consists of a subset $\Omega \subseteq
    \mathcal A$ and
   a linear order $<$ on $\Omega$, such that  the
   following is a basis for the vector space $\mathcal A$:
    \begin{align*}
    a_1 a_2 \cdots a_n \qquad \quad  n\in \mathbb N, \quad \qquad
    a_1,a_2,\ldots, a_n \in \Omega,
    \qquad \quad  a_1 \leq a_2 \leq \cdots \leq a_n.
    \end{align*}
  We interpret the empty product as the multiplicative identity in 
   $\mathcal A$.
   \end{definition}

\noindent 
Before proceeding, we have a comment about our approach.
Shortly we will apply
\cite[Propositions~6.2,~7.2]{alternating}.
The results in
\cite[Propositions~6.2,~7.2]{alternating} are about
$U^+_q$.
However the proofs of
\cite[Propositions~6.2,~7.2]{alternating} use only
Lemmas
\ref{lem:nrel1},
\ref{lem:nrel2}. Therefore
the results in \cite[Propositions~6.2,~7.2]{alternating} apply to
 $\mathcal U^+_q$ as well as $U^+_q$.
We will apply \cite[Propositions~6.2,~7.2]{alternating} to $\mathcal U^+_q$.

\begin{theorem}
\label{thm:pbwcal}
A PBW basis for $\mathcal U^+_q$ is obtained by its alternating generators
\begin{align*}
\lbrace \mathcal W_{-i} \rbrace_{i \in \mathbb N}, \qquad 
\lbrace \mathcal G_{j+1} \rbrace_{j\in \mathbb N}, \qquad  
\lbrace \mathcal {\tilde G}_{k+1} \rbrace_{k\in \mathbb N}, \qquad  
\lbrace \mathcal W_{\ell+1} \rbrace_{\ell\in \mathbb N}
\end{align*}
in any linear order $<$ that satisfies
\begin{align*}
\mathcal W_{-i} <  \mathcal G_{j+1} < \mathcal{\tilde G}_{k+1}
< \mathcal  W_{\ell+1} \qquad  i,j,k,\ell\in \mathbb N.
\end{align*}
\end{theorem}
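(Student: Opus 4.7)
The plan is to prove this PBW basis theorem by combining a spanning argument with a dimension count. First I would show that the ordered monomials in the prescribed alternating generators span $\mathcal{U}^+_q$, and then I would verify that the number of such monomials in each $(i,j)$-homogeneous component matches $\dim \mathcal{U}^+_{q,(i,j)}$ exactly, which forces linear independence.

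For the spanning step, I would invoke \cite[Propositions~6.2,~7.2]{alternating}, as the paragraph just before the theorem explicitly licenses. Those propositions are stated for $U^+_q$ but their proofs use only the relations of Lemmas \ref{lem:nrel1}, \ref{lem:nrel2}, and these relations are the defining relations of $\mathcal{U}^+_q$ by Definition \ref{def:calU}. Transferring the reductions carried out there to $\mathcal{U}^+_q$ shows that every element can be written as a linear combination of ordered products of alternating generators in the order $\mathcal{W}_{-i} < \mathcal{G}_{j+1} < \tilde{\mathcal{G}}_{k+1} < \mathcal{W}_{\ell+1}$. The four sequences of generators pairwise commute within each sequence by \eqref{eq:n3p4vvC} and \eqref{eq:n3p10vvC}, so such ordered monomials are well-defined.

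For the dimension count, by Lemma \ref{lem:calgrade} the degrees of the four families of generators are $(k+1,k)$, $(k+1,k+1)$, $(k+1,k+1)$, $(k,k+1)$ respectively. Since the generators of each sequence commute among themselves, the generating function enumerating the prescribed ordered monomials by bidegree is
\[
\prod_{n=1}^{\infty} \frac{1}{1-\lambda^{n}\mu^{n-1}}\cdot \frac{1}{(1-\lambda^{n}\mu^{n})^{2}}\cdot \frac{1}{1-\lambda^{n-1}\mu^{n}},
\]
which is precisely $\mathcal{H}(\lambda,\mu)$ from Definition \ref{def:calH}. By Proposition \ref{prop:Udim} this equals the generating function for $\dim \mathcal{U}^+_{q,(i,j)}$. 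Combined with spanning, this forces the ordered monomials to be linearly independent in every homogeneous component, and hence they form a basis.

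The main obstacle is the spanning step: one must check that the reduction procedures of \cite[Propositions~6.2,~7.2]{alternating} genuinely depend only on Lemmas \ref{lem:nrel1}, \ref{lem:nrel2} and not on any auxiliary identity in $U^+_q$ (such as Rosso's embedding $\natural$ into the $q$-shuffle algebra). Once the transfer is verified, the reduction yields ordered monomials in all four families $\mathcal{W}_{-i}, \mathcal{G}_{j+1}, \tilde{\mathcal{G}}_{k+1}, \mathcal{W}_{\ell+1}$ — unlike in \cite[Theorem~10.1]{alternating} for $U^+_q$, where only three families are needed because the extra relations in $U^+_q$ expressed $G_n$ in terms of $W$'s and $\tilde{G}_n$. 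The dimension count then automatically upgrades spanning to a basis, since equality of generating functions in $\mathbb{N}\times\mathbb{N}$-graded components is enough.
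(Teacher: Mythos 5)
Your proposal is correct and follows essentially the same route as the paper: spanning is obtained by transferring the reduction procedures of \cite[Propositions~6.2,~7.2]{alternating} to $\mathcal U^+_q$ (the paper organizes this transfer via a weight/defect-minimization argument, which you leave implicit), and linear independence then follows from matching the count of ordered monomials in each bidegree against $\mathcal H(\lambda,\mu)$ via Proposition \ref{prop:Udim}. No gaps beyond the level of detail in the spanning step, which the paper fills in exactly as you anticipate.
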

\begin{proof} 
Let $\Omega$ denote the 
set of alternating generators for $\mathcal U^+_q$.
Consider the following vectors in 
$\mathcal U^+_q$:
    \begin{align}
    a_1 a_2 \cdots a_n \qquad \quad  n\in \mathbb N, \quad \qquad
    a_1,a_2,\ldots, a_n \in \Omega, 
    \qquad \quad  a_1 \leq a_2 \leq \cdots \leq a_n.
 \label{eq:setcal} 
  \end{align}
We will show that the vectors
 in (\ref{eq:setcal}) form 
a basis for the vector space $\mathcal U^+_q$.
We first show that the vectors
in (\ref{eq:setcal})  span $\mathcal U^+_q$.
To each element of $\Omega$ we assign a weight
as follows. The elements
$\lbrace \mathcal W_{-i} \rbrace_{i \in \mathbb N}$
(resp. $\lbrace \mathcal G_{j+1} \rbrace_{j\in \mathbb N}$)
(resp. $\lbrace \mathcal {\tilde G}_{k+1} \rbrace_{k\in \mathbb N}$)
(resp. $\lbrace \mathcal W_{\ell+1} \rbrace_{\ell\in \mathbb N}$)
get weight 0 (resp. 1) (resp. 2) (resp. 3).
Any two elements of $\Omega$ commute if they have the same weight.
Let $\mathcal S$ denote the subspace of $\mathcal U^+_q$ spanned by
 (\ref{eq:setcal}). 
Note that $\mathcal S$ is spanned by the vectors
    \begin{align}
    a_1 a_2 \cdots a_n \qquad  n\in \mathbb N,  \qquad
    a_1,a_2,\ldots, a_n \in \Omega, 
    \qquad  {\rm wt}(a_1) \leq {\rm wt}(a_2) \leq \cdots \leq 
    {\rm wt}(a_n).
 \label{eq:setcalWT} 
  \end{align}
\noindent The algebra
$\mathcal U^+_q$ is generated by
$\Omega$. Therefore the vector space 
$\mathcal U^+_q$ is spanned by 
\begin{align}
    a_1 a_2 \cdots a_n \qquad \quad  n\in \mathbb N, \quad \qquad
    a_1,a_2,\ldots, a_n \in \Omega.
 \label{eq:setcal1} 
  \end{align}
For any product 
    $a_1 a_2 \cdots a_n$ in
 (\ref{eq:setcal1}), define its {\it defect} to be
$\sum_{i=1}^n (n-i) {\rm wt}(a_i)$.
We assume that 
 $\mathcal S \not=\mathcal U^+_q$ and get a contradiction. 
There exists a product in
 (\ref{eq:setcal1}) that is not contained in $\mathcal S$.
Let $D$ denote the minimum defect of all such products.
Pick a product $a_1a_2\cdots a_n$ in
 (\ref{eq:setcal1}) that is not contained in $\mathcal S$ and
 has defect $D$.
    The product 
    $a_1 a_2 \cdots a_n$ is not listed in 
 (\ref{eq:setcalWT}). Therefore 
there exists an integer $s$ $(2 \leq s \leq n)$
such that
${\rm wt}(a_{s-1}) > {\rm wt}(a_s)$.
 Using 
\cite[Propositions~6.2,~7.2]{alternating} 
we express the product
$a_{s-1}a_s$ as a linear combination of 
products $a'_{s-1} a'_s$ such that $a'_{s-1}, a'_s \in \Omega$
and ${\rm wt}(a'_{s-1}) < {\rm wt}(a'_s)$ and
$
{\rm wt}(a_{s-1})+
{\rm wt}(a_{s})=
{\rm wt}(a'_{s-1})+
{\rm wt}(a'_{s})$.
Replacing 
$a_{s-1}a_s$ by
$a'_{s-1}a'_s$ in $a_1a_2\cdots a_n$
we obtain a product
with defect less than $D$, and hence contained in $\mathcal S$.
We have now expressed $a_1a_2\cdots a_n$ as
a linear combination of
products, each contained in $\mathcal S$.
 Consequently
 $a_1a_2\cdots a_n$ is contained in $\mathcal S$, for a contradiction.
We have shown that the vectors in (\ref{eq:setcal})  span $\mathcal U^+_q$.
We can now easily show that the vectors
in (\ref{eq:setcal}) form a basis for $\mathcal U^+_q$.
Each element in $\Omega$ is 
  homogeneous with respect to the
 $(\mathbb N \times \mathbb N)$-grading of $\mathcal U^+_q$.
So each vector in 
 (\ref{eq:setcal}) is homogeneous with respect to the
 $(\mathbb N \times \mathbb N)$-grading of $\mathcal U^+_q$.
Consequently 
for $(i,j) \in \mathbb N \times \mathbb N$ the
 vectors in 
 (\ref{eq:setcal}) that have degree $(i,j)$
span the $(i,j)$-homogeneous component of
$\mathcal U^+_q$.
The number of such vectors
 is equal to the coefficient of $\lambda^i\mu^j$ in
$\mathcal H(\lambda, \mu)$,
and by Proposition
\ref{prop:Udim}
this coefficient is equal to the dimension
of the $(i,j)$-homogeneous component of $\mathcal U^+_q$.
By these comments and linear algebra, 
the vectors in 
 (\ref{eq:setcal}) that have degree $(i,j)$
form a basis for the $(i,j)$-homogeneous component of
$\mathcal U^+_q$.
We conclude that 
the vectors in 
 (\ref{eq:setcal}) form a basis for
the vector space $\mathcal U^+_q$.
The result follows.
\end{proof}

\section{Directions for  future research}

\noindent 
In this section we give some conjectures concerning
the $q$-Onsager algebra $\mathcal O_q$
and its current algebra $\mathcal A_q$. 
We will use the notation of
Definition \ref{def:polyalg} and
\cite[Definition~3.1]{basnc}.

\begin{conjecture}\rm
There exists an algebra isomorphism
$\mathcal A_q \to \mathcal O_q \otimes 
\mathbb F \lbrack z_1, z_2,\ldots \rbrack$.
\end{conjecture}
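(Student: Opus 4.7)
The plan is to mimic, step by step, the strategy that worked for $U^+_q$ and $\mathcal U^+_q$ in the body of the paper, replacing $(A,B)$ by the two generators of $\mathcal O_q$ (call them $W_0, W_1$) and replacing the alternating generators of $\mathcal U^+_q$ by the alternating generators of $\mathcal A_q$ defined in \cite[Definition~3.1]{basnc}. First I would fix in $\mathcal O_q$ a family of ``alternating'' elements
\begin{align*}
\{W_{-k}\}_{k\in\mathbb N},\quad \{W_{k+1}\}_{k\in\mathbb N},\quad
\{G_{k+1}\}_{k\in\mathbb N},\quad \{\tilde G_{k+1}\}_{k\in\mathbb N},
\end{align*}
obtained recursively from $W_0, W_1$ by the exact analogs of Lemma \ref{lem:nrecgen}; these are essentially the elements conjectured in \cite{basBel,z2z2z2} to generate $\mathcal O_q$. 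The heart of the proposal is a candidate map
\begin{align*}
\varphi:\mathcal A_q\to \mathcal O_q\otimes \mathbb F[z_1,z_2,\ldots],
\end{align*}
defined on the alternating generators of $\mathcal A_q$ by the same formulas used for $\varphi$ in Lemma \ref{lem:varphi}:
\begin{align*}
\mathcal W_{-n}\mapsto \sum_{k=0}^n W_{k-n}\otimes z_k,\qquad
\mathcal W_{n+1}\mapsto \sum_{k=0}^n W_{n+1-k}\otimes z_k,
\\
\mathcal G_n\mapsto \sum_{k=0}^n G_{n-k}\otimes z_k,\qquad
\mathcal{\tilde G}_n\mapsto \sum_{k=0}^n \tilde G_{n-k}\otimes z_k.
\end{align*}
To see this is a well-defined algebra homomorphism, one must verify that the images satisfy every defining relation of $\mathcal A_q$ in \cite[Definition~3.1]{basnc}; since the relations are homogeneous of the same type as those in Lemmas \ref{lem:nrel1}, \ref{lem:nrel2} but now carry additional ``mixed'' commutators, this reduces to establishing the $\mathcal O_q$ analogs of Lemmas \ref{lem:nrel1}--\ref{lem:rel3} for the alternating elements of $\mathcal O_q$.

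Next I would build the inverse $\phi$ by the route of Section 5. Using the analog of Definition \ref{def:Zvee} I would define central elements $Z^\vee_n\in\mathcal A_q$ (these are the elements already written down in \cite[Lemma~2.1]{basBel}) and the recursively derived $Z_n$ as in Definition \ref{def:Zn}. The crucial point is then a $q$-Onsager analog of Lemma \ref{lem:qSerrecal}: that $\mathcal W_0,\mathcal W_1\in\mathcal A_q$ satisfy the $q$-Dolan--Grady relations of $\mathcal O_q$. With this in hand, the universal property of $\mathcal O_q$ gives a homomorphism sending $W_0\mapsto \mathcal W_0$, $W_1\mapsto\mathcal W_1$, and tensoring with $z_n\mapsto Z_n$ produces $\phi:\mathcal O_q\otimes\mathbb F[z_1,z_2,\ldots]\to\mathcal A_q$. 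One then checks $\varphi\circ\phi=\mathrm{id}$ on the generators $W_0\otimes 1$, $W_1\otimes 1$, $1\otimes z_n$, and $\phi\circ\varphi=\mathrm{id}$ on the generators of $\mathcal A_q$; the latter requires that $\mathcal A_q$ is generated by $\mathcal W_0$, $\mathcal W_1$, and the $Z^\vee_n$, which is the analog of Corollary \ref{cor:calUgen} and follows from the recursions in \cite[Proposition~3.1]{basBel}.

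The main obstacle is a step that has no counterpart difficulty in the $U^+_q$ case: surjectivity of the map $\phi$ (equivalently, injectivity of $\varphi$). In our paper this was handled painlessly because $U^+_q$ has a known PBW basis of alternating elements and we could bootstrap a PBW basis of $\mathcal U^+_q$ from dimension counts via Proposition \ref{prop:Udim}. For $\mathcal O_q$ and $\mathcal A_q$ neither ingredient is on record in the needed form: one lacks both a proven PBW basis of alternating elements in $\mathcal O_q$ and a graded-dimension formula for $\mathcal A_q$. So the hard part is to establish the $\mathcal O_q$ analog of the PBW basis of Theorem \ref{thm:pbwcal} \emph{independently}, either by adapting Damiani-type root vector arguments to the alternating elements of $\mathcal A_q$ or by producing a $q$-shuffle-style embedding analogous to Rosso's; with such a PBW basis in hand, the dimension-count step of Theorem \ref{thm:pbwcal} goes through verbatim and forces $\varphi$ to be bijective, completing the proof of the conjecture.
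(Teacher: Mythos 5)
The statement you are addressing is stated in the paper only as a conjecture (Section 11); the paper offers no proof of it, so there is no argument of mine to compare yours against. What you have written is the natural transcription of the $U^+_q$ argument to the $q$-Onsager setting, and to your credit you locate the obstruction honestly rather than papering over it. But as it stands the proposal is a program, not a proof, and the gap you flag is genuinely fatal to completing it by this route with what is currently on record.

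Two points deserve sharper emphasis. First, even the well-definedness of your candidate $\varphi$ is not free: it requires that the alternating elements of $\mathcal O_q$ (built recursively from $W_0,W_1$) actually satisfy the full list of relations in \cite[Definition~3.1]{basnc}, which differ from those of $\mathcal U^+_q$ by the extra terms $\rho\,\mathcal W_{-k-1}-\rho\,\mathcal W_{k+1}$ and the like; the $\mathcal O_q$-analogs of Lemmas \ref{lem:nrel1}--\ref{lem:rel3} are themselves unproven and are essentially equivalent in difficulty to \cite[Conjecture~1]{basBel}. Second, and more structurally, the dimension-counting mechanism that closes the argument for $\mathcal U^+_q$ relies on the $(\mathbb N\times\mathbb N)$-grading of $U^+_q$, which exists because the $q$-Serre relations are homogeneous. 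The $q$-Dolan--Grady relations are \emph{not} homogeneous (a quartic expression equals a quadratic one), so $\mathcal O_q$ and $\mathcal A_q$ carry only filtrations, not gradings. Your Proposition \ref{prop:Udim} analogue therefore cannot be formulated as a graded dimension count; one would have to pass to associated graded algebras and control those instead, which is a substantively different (and harder) argument than the one in Section 10. Until either the alternating PBW basis for $\mathcal O_q$ or a filtered-dimension comparison for $\mathcal A_q$ is established independently, the bijectivity of $\varphi$ remains open, and the statement remains a conjecture.
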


\begin{conjecture}\rm
Let $\mathcal Z$ denote the center of 
$\mathcal A_q$. Then the algebra $\mathcal Z$
is isomorphic to 
$\mathbb F \lbrack z_1, z_2,\ldots \rbrack$.
\end{conjecture}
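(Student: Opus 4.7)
The plan is to mimic the argument of Section 6, transporting each step from $(\mathcal U^+_q, U^+_q)$ to $(\mathcal A_q, \mathcal O_q)$. First, I would adopt the central elements $Z^\vee_n \in \mathcal A_q$ constructed in \cite[Lemma~2.1]{basBel}, which are the analogs of the elements of Definition \ref{def:Zvee}. I would then define auxiliary elements $Z_n \in \mathcal A_q$ recursively by the formula of Definition \ref{def:Zn}, and verify, as in Lemma \ref{lem:ZvZZ}, that
\begin{align*}
Z^\vee_n = \sum_{k=0}^n Z_k Z_{n-k} q^{n-2k},
\end{align*}
so that the subalgebra of $\mathcal A_q$ generated by $\{Z^\vee_n\}_{n=1}^\infty$ coincides with the subalgebra generated by $\{Z_n\}_{n=1}^\infty$; call this subalgebra $\mathcal Z_0$.

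Next, assuming the preceding conjecture, let $\Phi: \mathcal A_q \to \mathcal O_q \otimes \mathbb F[z_1,z_2,\ldots]$ be an algebra isomorphism. Following the pattern of Lemmas \ref{lem:varphiZvee} and \ref{lem:etaZ}, I would show that $\Phi$ can be normalized so that $\Phi(Z^\vee_n) = 1 \otimes z^\vee_n$, and hence $\Phi(Z_n) = 1 \otimes z_n$, for $n \geq 1$. The essential auxiliary input is the analog of Lemma \ref{lem:uc}, namely that the center of $\mathcal O_q$ is $\mathbb F 1$. Granting this, the center of $\mathcal O_q \otimes \mathbb F[z_1,z_2,\ldots]$ equals $\mathbb F \otimes \mathbb F[z_1,z_2,\ldots]$, and pulling back along $\Phi^{-1}$ identifies the center $\mathcal Z$ of $\mathcal A_q$ with $\mathcal Z_0$. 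Since $\{z_n\}_{n=1}^\infty$ are algebraically independent, the map $z_n \mapsto Z_n$ extends to an algebra isomorphism $\mathbb F[z_1,z_2,\ldots]\to \mathcal Z$, as required.

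The main obstacle is the preceding conjecture itself, which is open; the present paper is meant precisely as a blueprint for that conjecture in the simpler $(\mathcal U^+_q, U^+_q)$ setting. A secondary obstacle is establishing that the center of $\mathcal O_q$ equals $\mathbb F 1$, the $q$-Onsager analog of Yamane's theorem invoked in Lemma \ref{lem:uc}; I am not aware of a reference for this. An alternative route that partially sidesteps the preceding conjecture is to prove directly that $\{Z^\vee_n\}_{n=1}^\infty$ (i) exhaust the center of $\mathcal A_q$ and (ii) are algebraically independent. Step (ii) looks tractable by evaluating the $Z^\vee_n$ on a judiciously chosen family of $\mathcal A_q$-modules, such as tensor products of evaluation modules, and exhibiting algebraic independence of the resulting spectral functions. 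Step (i) is the hard part: a direct proof would likely require a PBW-type description of $\mathcal A_q$ analogous to Theorem \ref{thm:pbwcal}, and establishing such a description for $\mathcal A_q$ is a major open problem in its own right.
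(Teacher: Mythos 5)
The paper offers no proof of this statement: it appears in Section 11 as an open conjecture, supported only by the analogy with the $\mathcal U^+_q$ results of Sections 5--6 and by the evidence cited from Baseilhac--Belliard. So there is no ``paper's own proof'' to compare against, and your proposal should be judged as a blueprint. As such it is sound and honest: you correctly transport the Section 6 argument, and you correctly isolate the two missing inputs, namely the preceding conjecture ($\mathcal A_q \cong \mathcal O_q \otimes \mathbb F\lbrack z_1,z_2,\ldots\rbrack$) and the $q$-Onsager analog of Lemma \ref{lem:uc} (triviality of the center of $\mathcal O_q$). Neither was established in the literature available to this paper, so your argument is conditional, exactly as you say.

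Two refinements. First, granting those two inputs, the conjecture \emph{as literally stated} follows immediately and you can skip most of your machinery: any algebra isomorphism carries center to center, so $\mathcal Z \cong Z(\mathcal O_q \otimes \mathbb F\lbrack z_1,z_2,\ldots\rbrack) = \mathbb F \otimes \mathbb F\lbrack z_1,z_2,\ldots\rbrack$. The work you do with $Z^\vee_n$, $Z_n$ and the normalization of $\Phi$ is aimed at the stronger assertion that the explicit central elements of \cite[Lemma~2.1]{basBel} generate $\mathcal Z$. Second, for that stronger assertion your phrase ``$\Phi$ can be normalized so that $\Phi(Z^\vee_n)=1\otimes z^\vee_n$'' hides a real step: in the $\mathcal U^+_q$ case this identity is not a normalization of an abstract isomorphism but a computation (Lemma \ref{lem:varphiZvee}) using the explicit formulas defining $\varphi$ together with the identity (\ref{eq:nGGWW1}) in $U^+_q$. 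An abstract isomorphism supplied by the preceding conjecture need not behave this way, so one would need either an explicitly constructed $\Phi$ (as in Lemma \ref{lem:varphi}) or the $\mathcal O_q$-analog of Lemma \ref{prop:nGGWW}. Your fallback route via algebraic independence on evaluation modules plus a PBW basis for $\mathcal A_q$ is reasonable but, as you note, runs into the other open conjectures of Section 11.
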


\begin{conjecture}\rm
Let $\langle \mathcal W_0, \mathcal W_1\rangle$
denote the subalgebra of $\mathcal A_q$
generated by $\mathcal W_0, \mathcal W_1$.
Then the algebra 
 $\langle \mathcal W_0, \mathcal W_1\rangle$
is isomorphic to $\mathcal O_q$.
\end{conjecture}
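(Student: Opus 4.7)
The plan is to mimic the proof of Proposition \ref{prop:gammainv} in the $\mathcal A_q$/$\mathcal O_q$ setting. Let $\flat$ denote the candidate map $\mathcal O_q \to \langle \mathcal W_0, \mathcal W_1\rangle \subseteq \mathcal A_q$ sending the two standard generators of $\mathcal O_q$ to $\mathcal W_0, \mathcal W_1$. To build $\flat$, the first step is to verify the analogue of Lemma \ref{lem:qSerrecal} in $\mathcal A_q$, i.e.\ that the elements $\mathcal W_0, \mathcal W_1 \in \mathcal A_q$ satisfy the $q$-Dolan-Grady relations that define $\mathcal O_q$. This verification should run in close parallel to the proof of Lemma \ref{lem:qSerrecal}: solve the relevant relations of \cite[Definition~3.1]{basnc} to express $\mathcal G_1$ (and hence $\mathcal W_{-1}$) in terms of $\mathcal W_0, \mathcal W_1$ modulo the center, and then use $[\mathcal W_0, \mathcal W_{-1}] = 0$ together with the analogue for $\mathcal W_{2}$. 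Note that the required relation is already recorded in \cite[Line~(3.7)]{basBel}, so this step reduces to quoting that result, and then $\flat$ exists automatically.

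The harder direction is to construct a left inverse of $\flat$. For $\mathcal U^+_q$ this was the restriction of the surjection $\gamma: \mathcal U^+_q \to U^+_q$ of Lemma \ref{lem:gamma}, whose existence was immediate because the defining relations of $\mathcal U^+_q$ were forced to match the known relations satisfied by the alternating elements of $U^+_q$. The natural analogue here is to produce an algebra homomorphism $\gamma_{\mathcal A}: \mathcal A_q \to \mathcal O_q$ sending each alternating generator of $\mathcal A_q$ to the corresponding alternating element of $\mathcal O_q$ (as constructed in \cite{bas2,basBel}), and then to restrict to $\langle \mathcal W_0,\mathcal W_1\rangle$. Equivalently, one can assume Conjecture 11.1, let $\theta: \mathbb F\lbrack z_1, z_2,\ldots\rbrack\to \mathbb F$ be the augmentation $z_n\mapsto 0$, and define $\gamma_{\mathcal A} = (\mathrm{id}\otimes \theta)\circ\varphi_{\mathcal A}$ where $\varphi_{\mathcal A}:\mathcal A_q\to \mathcal O_q\otimes\mathbb F\lbrack z_1,z_2,\ldots\rbrack$ is the isomorphism of Conjecture 11.1. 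Once $\gamma_{\mathcal A}$ is in hand, the argument of Proposition \ref{prop:gammainv} applies verbatim: $\gamma_{\mathcal A}\circ\flat$ fixes the two generators of $\mathcal O_q$, hence is the identity, so $\flat$ is injective; surjectivity of $\flat$ is built into its definition; hence $\flat$ is an algebra isomorphism and its inverse is the restriction of $\gamma_{\mathcal A}$ to $\langle \mathcal W_0,\mathcal W_1\rangle$.

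The main obstacle is of course that both Conjecture 11.1 and the existence of the map $\gamma_{\mathcal A}$ are themselves open, so a direct route is preferable. My preferred strategy is to attempt a parallel to the $\mathcal U^+_q$ development: use the defining relations of $\mathcal A_q$ in \cite[Definition~3.1]{basnc} (which closely resemble Lemmas \ref{lem:nrel1}, \ref{lem:nrel2}) to show that the assignment sending each alternating generator of $\mathcal A_q$ to the matching alternating element of $\mathcal O_q$ respects all of those relations; this yields $\gamma_{\mathcal A}$ unconditionally. The technical heart of this step is to check that the known relations among the alternating elements of $\mathcal O_q$ (an $\mathcal O_q$-analogue of Lemmas \ref{lem:nrel1}, \ref{lem:nrel2}, available in the Baseilhac--Koizumi--Shigechi papers, and conjectured to be a complete list in \cite[Conjecture~2]{basBel}) really do cover all the $\mathcal A_q$ defining relations. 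If that verification goes through, the conjecture follows. If instead some $\mathcal A_q$ relation is not yet known to hold in $\mathcal O_q$ for the alternating elements, the problem reduces to that missing identity, and one would need either a $q$-shuffle-type injection playing the role of Rosso's map $\natural$ for $\mathcal O_q$, or a faithful family of representations distinguishing the two algebras on $\langle \mathcal W_0,\mathcal W_1\rangle$.
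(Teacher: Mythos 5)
This statement is one of the open conjectures in Section 11; the paper offers no proof of it, only the remark that results supporting it can be found in the Baseilhac--Belliard reference. So there is no ``paper's own proof'' to compare against, and the real question is whether your proposal closes the gap. It does not: as you yourself acknowledge in the final paragraph, every route you describe bottoms out in another open problem (Conjecture 11.1, the existence of a surjection $\gamma_{\mathcal A}:\mathcal A_q\to\mathcal O_q$, or a complete verification that the defining relations of $\mathcal A_q$ hold among the alternating elements of $\mathcal O_q$). What you have written is a correct and well-organized reduction, not a proof.

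The concrete gap is the left inverse. Your map $\flat:\mathcal O_q\to\langle\mathcal W_0,\mathcal W_1\rangle$ is unproblematic and is surjective by construction, but surjectivity alone does not give the conjecture; you need injectivity, and the only mechanism you propose for that is composing with a homomorphism $\gamma_{\mathcal A}$ that kills the extra generators. In the $U^+_q$ case treated in this paper, the analogous map $\gamma$ of Lemma \ref{lem:gamma} exists \emph{by fiat}: the defining relations of $\mathcal U^+_q$ were chosen to be exactly the relations of Lemmas \ref{lem:nrel1}, \ref{lem:nrel2}, which had already been \emph{proved} to hold among the alternating elements of $U^+_q$ via Rosso's embedding $\natural$ into the $q$-shuffle algebra. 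For $\mathcal O_q$ no analogue of $\natural$ is known, the alternating elements of $\mathcal O_q$ are not even canonically defined in this paper, and the paper states explicitly that the precise relationship between $\mathcal A_q$ and $\mathcal O_q$ is presently unknown. Until one either constructs such an embedding, or otherwise verifies all the relations of \cite[Definition~3.1]{basnc} inside $\mathcal O_q$, the map $\gamma_{\mathcal A}$ --- and with it the injectivity of $\flat$ --- remains out of reach. Your proposal correctly locates the difficulty but does not resolve it.
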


\begin{conjecture}\rm
The multiplication map
\begin{align*}
\langle \mathcal W_0,\mathcal W_1\rangle \otimes \mathcal Z &\to
	       \mathcal A_q 
	       \\
w \otimes z &\mapsto      wz            
\end{align*}
is an algebra isomorphism.
\end{conjecture}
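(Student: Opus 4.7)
The plan is to imitate the proof of Proposition \ref{lem:UZ} verbatim, using the three preceding conjectures of this section as inputs. First I would note that the multiplication map $m: \langle \mathcal W_0, \mathcal W_1\rangle \otimes \mathcal Z \to \mathcal A_q$ is automatically an algebra homomorphism, because by definition $\mathcal Z$ is central in $\mathcal A_q$; so the nontrivial content is that $m$ is bijective.

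To establish bijectivity I would factor $m$ through the conjectured isomorphism $\Phi: \mathcal A_q \to \mathcal O_q \otimes \mathbb F\lbrack z_1,z_2,\ldots\rbrack$. Grant the conjecture that $\langle \mathcal W_0,\mathcal W_1\rangle \cong \mathcal O_q$: the canonical surjection $\gamma: \mathcal A_q \to \mathcal O_q$ (obtained by composing $\Phi$ with ${\rm id}\otimes \theta$) restricts to an isomorphism $\gamma_{\rm rest}: \langle \mathcal W_0,\mathcal W_1\rangle \to \mathcal O_q$, exactly as in Proposition \ref{prop:gammainv}. Grant the conjecture on the center: $\eta := (\vartheta\otimes{\rm id})\circ \Phi$ restricts to an isomorphism $\eta_{\rm rest}: \mathcal Z \to \mathbb F\lbrack z_1,z_2,\ldots\rbrack$, by the argument of Proposition \ref{lem:UqC}(iv). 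Then the composition
\begin{equation*}
{\begin{CD}
\langle \mathcal W_0,\mathcal W_1\rangle \otimes \mathcal Z
@>\gamma_{\rm rest}\otimes \eta_{\rm rest}>>
\mathcal O_q \otimes \mathbb F\lbrack z_1,z_2,\ldots\rbrack
@>\Phi^{-1}>>
\mathcal A_q
\end{CD}}
\end{equation*}
is an algebra isomorphism, and a routine diagram chase on the generators $\mathcal W_0\otimes 1$, $\mathcal W_1\otimes 1$, $1\otimes Z_n$ (where $Z_n$ are the central elements of $\mathcal A_q$ corresponding to $z_n$ under $\eta_{\rm rest}$, produced as in Definition \ref{def:Zn}) shows this composition equals $m$.

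The main obstacle is of course proving the three input conjectures, and among them the isomorphism $\Phi$ is by far the deepest; everything else should flow from it in a purely formal way as in Sections 5--6 of this paper. To construct $\Phi$ one would define its action on the Baseilhac--Shigechi alternating generators \cite[Definition~3.1]{basnc} by formulas of the same shape as in Lemma \ref{lem:varphi}, and verify the (substantially more intricate) defining relations of $\mathcal A_q$ on these images. To invert $\Phi$ one would identify the correct central elements $Z^\vee_n \in \mathcal A_q$ in the style of \cite[Lemma~2.1]{basBel}, and prove an analog of the recursive presentation in Lemma \ref{lem:nrecgenCal} expressing every generator of $\mathcal A_q$ as a noncommutative polynomial in $\mathcal W_0$, $\mathcal W_1$, and the $Z^\vee_n$.

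A crucial enabling ingredient is a $q$-Onsager analog of Lemma \ref{lem:uc}, namely that the center of $\mathcal O_q$ is $\mathbb F 1$; granted this, the proof of Proposition \ref{lem:UqC}(i) applies mutatis mutandis to deduce the center conjecture from the $\Phi$-conjecture. Similarly, the subalgebra conjecture would follow once one verifies that $\mathcal W_0,\mathcal W_1\in \mathcal A_q$ satisfy the defining relations of $\mathcal O_q$ (the $q$-Dolan--Grady relations), by an argument parallel to Lemma \ref{lem:qSerrecal}; a candidate proof reduces this to examining the degree-$(1,1)$ and degree-$(2,1)$, $(1,2)$ components of $\Phi$. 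Once all three conjectures are in hand, the proof of the multiplication conjecture above is automatic.
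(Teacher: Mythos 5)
This statement is one of the open conjectures in Section 11 of the paper; the paper gives no proof of it, only the remark that supporting evidence can be found in the Baseilhac--Belliard work. So there is no proof of record to compare against, and your proposal should be judged on its own terms as a proof of the conjecture --- which it is not. What you have written is a conditional reduction: granted the three preceding conjectures (the isomorphism $\Phi: \mathcal A_q \to \mathcal O_q \otimes \mathbb F\lbrack z_1,z_2,\ldots\rbrack$, the description of the center, and the identification of $\langle \mathcal W_0,\mathcal W_1\rangle$ with $\mathcal O_q$), the multiplication map is an isomorphism by exactly the factorization argument used in Proposition \ref{lem:UZ} for $\mathcal U^+_q$. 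That conditional argument is sound, and your identification of the needed auxiliary inputs (triviality of the center of $\mathcal O_q$ as the analog of Lemma \ref{lem:uc}; the $q$-Dolan--Grady relations for $\mathcal W_0,\mathcal W_1$, which in fact are already known from \cite[eqn.~(3.7)]{basBel} rather than needing a new proof; central elements in the style of \cite[Lemma~2.1]{basBel}; and a recursion in the style of Lemma \ref{lem:nrecgenCal}, which is essentially \cite[Proposition~3.1]{basBel}) matches the template the paper itself uses in Sections 5--6.

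The genuine gap is the one you name yourself: the entire mathematical content lives in the construction and invertibility of $\Phi$, and for $\mathcal A_q$ this is precisely what is not known. In the $U^+_q$ case the paper could verify the relations of Lemma \ref{lem:varphi} because the alternating elements of $U^+_q$ have explicit $q$-shuffle realizations and the relations of Lemmas \ref{lem:nrel1}, \ref{lem:nrel2} are already theorems in $U^+_q$; the invertibility then rested on the central elements $Z^\vee_n$ (whose centrality required the nontrivial generating-function computation of the Appendix) and on Lemma \ref{lem:nrecgenCal}. For $\mathcal A_q$ the target $\mathcal O_q$ has no known analog of the alternating elements satisfying the full set of $\mathcal A_q$ relations, so even the candidate map $\Phi$ cannot yet be written down, let alone inverted. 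Your proposal is therefore an accurate roadmap rather than a proof, and it would be strengthened by saying explicitly that the reduction leaves the conjecture exactly as open as the isomorphism conjecture it depends on.
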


\begin{conjecture}\rm The generators in order
\begin{align*}
\lbrace \mathcal W_{-k} \rbrace_{k \in \mathbb N}, \qquad 
\lbrace \mathcal G_{k+1} \rbrace_{k\in \mathbb N}, \qquad  
\lbrace \mathcal {\tilde G}_{k+1} \rbrace_{k\in \mathbb N}, \qquad  
\lbrace \mathcal W_{k+1} \rbrace_{k\in \mathbb N}
\end{align*}
give a PBW basis for $\mathcal A_q$.
\end{conjecture}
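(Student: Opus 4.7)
The plan is to follow the blueprint of Theorem \ref{thm:pbwcal} verbatim, with $\mathcal A_q$ in place of $\mathcal U^+_q$. The spanning half of the argument is the most directly portable: by \cite[Definition~3.1]{basnc} the alternating generators of $\mathcal A_q$ satisfy the analogues of Lemmas \ref{lem:nrel1}, \ref{lem:nrel2}, so the weight assignment (weight $0,1,2,3$ for the four families) and the notion of defect from the proof of Theorem \ref{thm:pbwcal} carry over word-for-word. Using the reordering rules coming from the $\mathcal A_q$-analogues of \cite[Propositions~6.2,~7.2]{alternating}, an induction on defect shows that any product of generators can be rewritten as a linear combination of products in non-decreasing weight order, so the proposed PBW monomials span $\mathcal A_q$.

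The linear independence half reduces to a dimension count. First I would establish Conjecture~1 of Section~11, namely an algebra isomorphism $\mathcal A_q \to \mathcal O_q \otimes \mathbb F\lbrack z_1, z_2,\ldots\rbrack$. Granting this, $\mathcal A_q$ inherits an $(\mathbb N\times\mathbb N)$-grading (combine the standard grading on $\mathcal O_q$ with $\mathrm{deg}(z_n)=(n,n)$) whose Hilbert series is $\mathcal H(\lambda,\mu) = H(\lambda,\mu) Z(\lambda,\mu)$ from Definition \ref{def:calH}. On the other hand, the proposed PBW monomials are homogeneous, and the number in bidegree $(i,j)$ equals the coefficient of $\lambda^i\mu^j$ in $\mathcal H(\lambda,\mu)$, by the very combinatorics used in Proposition \ref{prop:Udim}. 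Since these monomials already span the $(i,j)$-component, they must be a basis, exactly as in the final paragraph of the proof of Theorem \ref{thm:pbwcal}.

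To establish the prerequisite Conjecture~1, I would mirror Sections~5--6 of the present paper. Define $Z^\vee_n \in \mathcal A_q$ by the formula \eqref{eq:ZnVee}; show that each $Z^\vee_n$ is central in $\mathcal A_q$ (the analogue of Lemma \ref{lem:Zcent}), which is consistent with \cite[Lemma~2.1]{basBel}; solve the recursion \eqref{eq:Znsol} to produce $Z_n$; construct $\varphi$ by the formulas of Lemma \ref{lem:varphi}, and construct its candidate inverse $\phi$ via the $\mathcal O_q$-presentation of $\langle \mathcal W_0, \mathcal W_1\rangle \subseteq \mathcal A_q$ together with the central images $Z_n$. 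The two compositions are identity maps by the same generator-chasing argument used in Theorem \ref{thm:main1}.

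The main obstacle is not the PBW extraction itself but rather verifying centrality of the $Z^\vee_n$ in $\mathcal A_q$ and, even earlier, verifying that the analogues of \cite[Propositions~6.2,~7.2]{alternating} hold in $\mathcal A_q$. The defining relations of $\mathcal A_q$ in \cite[Definition~3.1]{basnc} are strictly richer than those of $\mathcal U^+_q$, so while the surjection $\mathcal U^+_q \twoheadrightarrow \mathcal A_q$ (or a comparison map in the reverse direction) gives many of the needed identities for free, the extra relations in $\mathcal A_q$ will have to be shown compatible with the central elements $Z^\vee_n$ and with the reordering procedure. Establishing this compatibility is where genuinely new technical input beyond the methods of the present paper is required.
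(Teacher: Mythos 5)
The statement you are trying to prove is stated in the paper only as a conjecture (Section 11); the paper offers no proof and merely points to \cite{basBel} for supporting evidence. So there is no argument of the paper's to compare yours against, and the question is whether your proposal closes the gap. It does not, and you partly say so yourself, but the obstruction is located in a different place than you suggest. The decisive difficulty is not merely the centrality of the $Z^\vee_n$ or the reordering rules: it is that the entire linear-independence half of the proof of Theorem \ref{thm:pbwcal} rests on the $(\mathbb N\times\mathbb N)$-grading of $\mathcal U^+_q$ and the resulting Hilbert series $\mathcal H(\lambda,\mu)$, and this grading does not exist for $\mathcal A_q$ or $\mathcal O_q$. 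The $q$-Serre relations are homogeneous, but the $q$-Dolan/Grady relations are not (the left side has total degree $4$, the right side degree $2$), and likewise the defining relations of $\mathcal A_q$ in \cite[Definition~3.1]{basnc} contain inhomogeneous terms such as $\rho\,\mathcal W_{-k-1}-\rho\,\mathcal W_{k+1}$ on the right of $\lbrack \mathcal W_0,\mathcal G_{k+1}\rbrack_q$. Your claim that ``$\mathcal A_q$ inherits an $(\mathbb N\times\mathbb N)$-grading \dots whose Hilbert series is $\mathcal H(\lambda,\mu)$'' is therefore false as stated; at best one gets a filtration, and the dimension count would have to be carried out on an associated graded algebra, which requires controlling the leading terms of all the relations --- genuinely new work that your plan does not supply.

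A second, independent gap is that your argument is conditional on Conjecture~1 of the same section (the isomorphism $\mathcal A_q\to\mathcal O_q\otimes\mathbb F\lbrack z_1,z_2,\ldots\rbrack$), which is itself open; deducing one open conjecture from another is a legitimate reduction but not a proof. To make your reduction honest you would also need to verify that the proofs of \cite[Propositions~6.2,~7.2]{alternating} go through with the $\mathcal A_q$-relations in place of \eqref{eq:n3p1vvC}--\eqref{eq:n3p11vC}; since those relations differ by the extra inhomogeneous terms, this is not automatic either. In short: the spanning argument plausibly transfers with some care, but the independence argument as you have set it up would fail at the grading step, and that is precisely why the paper leaves this as a conjecture.
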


\noindent 
See \cite{basBel} for results that support the
above conjectures.
For more general information on $\mathcal A_q$ 
and $\mathcal O_q$,
see  
\cite{
BK05,
basnc} for 
a mathematical physics point of view,
and 
 \cite{BK, kolb, qSerre,
pospart,
lusztigaut,
pbw,
z2z2z2}
for an algebraic point of view.

\section{Acknowledgment} Some of the research
supporting this paper was carried out by
the author during his visit to the
Institut Denis Poisson (IDP) CNRS
at the University of Tours,
France, during
May 13--24, 2019. 
The author thanks his host
Pascal Baseilhac for many discussions and kind hospitality.
The author also thanks Hiroyuki Yamane at U. Toyama for his explanation
concerning the center of
$U^+_q$.

\section{Appendix: The $Z^\vee_n$ are central in $\mathcal U^+_q$}
\noindent Our goal here is to prove
Lemma
\ref{lem:Zcent}. It will be convenient  to use generating
functions. 
\begin{definition}
\label{def:gf4}
\rm
We define some generating functions in an indeterminate $t$:
\begin{align*}
&\mathcal G(t) = \sum_{n \in \mathbb N} \mathcal G_n t^n,
\qquad \qquad \qquad 
\mathcal {\tilde G}(t) = \sum_{n \in \mathbb N} \mathcal {\tilde G}_n t^n,
\\
&
\mathcal W^-(t) = \sum_{n \in \mathbb N} \mathcal W_{-n} t^n,
\qquad \qquad  
\mathcal W^+(t) = \sum_{n \in \mathbb N} \mathcal W_{n+1} t^n.
\end{align*}
\end{definition}
\noindent 
By (\ref{eq:n3p4vvC}) we have
\begin{align}
\lbrack \mathcal W_0, \mathcal W^-(t) \rbrack = 0, \qquad \qquad
\lbrack \mathcal W_1, \mathcal W^+(t) \rbrack = 0.
\label{eq:com1}
\end{align}

\noindent Next we express the relations
(\ref{eq:n3p1vvC})--(\ref{eq:n3p3vvC})
in terms of generating functions.
\begin{lemma}
\label{lem:gf1} We have
\begin{align*}
&\lbrack  \mathcal W_0, \mathcal W^+(t) \rbrack = 
\lbrack  \mathcal W^-(t), \mathcal W_1 \rbrack = 
(1-q^{-2})t^{-1} \bigl(\mathcal {\tilde G}(t)-\mathcal G(t)\bigr),
\\
&\lbrack  \mathcal W_0, \mathcal G(t) \rbrack_q = 
\lbrack  \mathcal {\tilde G}(t), \mathcal W_0 \rbrack_q = 
(q-q^{-1}) \mathcal  W^-(t),
\\
&\lbrack  \mathcal G(t), \mathcal W_1 \rbrack_q = 
\lbrack  \mathcal W_1, \mathcal {\tilde G}(t) \rbrack_q = 
(q-q^{-1}) \mathcal  W^+(t).
\end{align*}
\end{lemma}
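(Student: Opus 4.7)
The proof is a routine bookkeeping exercise that translates the defining relations (\ref{eq:n3p1vvC})--(\ref{eq:n3p3vvC}) into generating-function form. I would handle each of the three pairs of identities in the same way: take the relevant defining relation, multiply by an appropriate power of $t$, sum over $k \in \mathbb N$, and then identify the resulting sums with the generating functions $\mathcal W^-(t)$, $\mathcal W^+(t)$, $\mathcal G(t)$, $\mathcal {\tilde G}(t)$ of Definition \ref{def:gf4}.

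For the first pair of identities, multiplying (\ref{eq:n3p1vvC}) by $t^k$ and summing over $k \in \mathbb N$ immediately yields
\[
[\mathcal W_0, \mathcal W^+(t)] = [\mathcal W^-(t), \mathcal W_1] = (1-q^{-2})\sum_{k\geq 0}(\mathcal {\tilde G}_{k+1} - \mathcal G_{k+1})\,t^k.
\]
Since $\mathcal G_0 = \mathcal {\tilde G}_0 = 1$, the $k=0$ terms cancel in $\mathcal {\tilde G}(t)-\mathcal G(t)$, so this difference equals $t\sum_{k\geq 0}(\mathcal{\tilde G}_{k+1}-\mathcal G_{k+1})t^k$, which accounts for the factor of $t^{-1}$ on the right side of the claimed identity. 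For the second pair, I would view (\ref{eq:n3p2vvC}) as a statement indexed by $n=k+1\geq 1$, multiply by $t^n$ and sum over $n\geq 1$. The missing $n=0$ term is supplied automatically by the convention $\mathcal G_0 = \mathcal {\tilde G}_0 = 1$: this forces $[\mathcal W_0, \mathcal G_0]_q = [\mathcal {\tilde G}_0, \mathcal W_0]_q = (q-q^{-1})\mathcal W_0$, which matches $(q-q^{-1})\mathcal W_{-0}$, the constant term of $\mathcal W^-(t)$. The third pair is handled identically from (\ref{eq:n3p3vvC}), with $\mathcal W_1$ in place of $\mathcal W_0$.

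There is no genuine obstacle here; the only place requiring any care is the $t^{-1}$ in the first identity, which is forced by the index shift $k\mapsto k+1$ on the $\mathcal G$- and $\mathcal{\tilde G}$-side of (\ref{eq:n3p1vvC}), and the verification that the $n=0$ contributions match correctly in the second and third identities. Both points amount to a single observation about the constant terms of the generating functions.
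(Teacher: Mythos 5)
Your proposal is correct and follows exactly the route the paper intends: Lemma \ref{lem:gf1} is stated there without proof as a direct generating-function reformulation of (\ref{eq:n3p1vvC})--(\ref{eq:n3p3vvC}), and your verification (including the index shift producing the $t^{-1}$ and the check that the conventions $\mathcal G_0=\mathcal{\tilde G}_0=1$ supply the correct $n=0$ terms $(q-q^{-1})\mathcal W_0$ and $(q-q^{-1})\mathcal W_1$) is precisely the bookkeeping being left to the reader.
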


\noindent Next we express the relations
(\ref{eq:n3p4vvC})--(\ref{eq:n3p11vC})
in terms of generating functions. Let $s$ denote an indeterminate
that commutes with $t$.
\begin{lemma}
\label{lem:gf2} We have
\begin{align*}
&\lbrack  \mathcal W^-(s), \mathcal W^-(t) \rbrack = 0, 
\qquad 
\lbrack  \mathcal W^+(s), \mathcal W^+(t) \rbrack = 0,
\\
&\lbrack  \mathcal W^-(s), \mathcal W^+(t) \rbrack 
+
\lbrack  \mathcal W^+(s), \mathcal W^-(t) \rbrack = 0,
\\
&s \lbrack  \mathcal W^-(s), \mathcal G(t) \rbrack 
+
t \lbrack  \mathcal G(s), \mathcal W^-(t) \rbrack = 0,
\\
&s \lbrack  \mathcal W^-(s), \mathcal {\tilde G}(t) \rbrack 
+
t \lbrack  \mathcal{\tilde G}(s), \mathcal W^-(t) \rbrack = 0,
\\
&s \lbrack  \mathcal W^+(s), \mathcal G(t) \rbrack
+
t \lbrack  \mathcal G(s), \mathcal W^+(t) \rbrack = 0,
\\
&s \lbrack  \mathcal W^+(s), \mathcal {\tilde G}(t) \rbrack
+
t \lbrack  \mathcal{\tilde G}(s), \mathcal W^+(t) \rbrack = 0,
\\
&\lbrack  \mathcal G(s), \mathcal G(t) \rbrack = 0, 
\qquad 
\lbrack  \mathcal {\tilde G}(s), \mathcal {\tilde G}(t) \rbrack = 0,
\\
&\lbrack  \mathcal {\tilde G}(s), \mathcal G(t) \rbrack +
\lbrack  \mathcal G(s), \mathcal {\tilde G}(t) \rbrack = 0.
\end{align*}
\end{lemma}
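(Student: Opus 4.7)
The plan is to verify each identity by expanding both sides as formal power series in $s, t$ and matching coefficients against the relations (\ref{eq:n3p4vvC})--(\ref{eq:n3p11vC}) from Definition \ref{def:calU}. There is no essential obstacle; each generating-function identity is essentially a repackaging of the stated defining relations, with the prefactors $s, t$ inserted precisely to compensate for the indexing conventions of Definition \ref{def:gf4}.

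First I would handle the four relations that carry no $s, t$ prefactor. For $[\mathcal{W}^-(s), \mathcal{W}^-(t)]$ and $[\mathcal{W}^+(s), \mathcal{W}^+(t)]$ the coefficient of $s^k t^\ell$ is $[\mathcal{W}_{-k}, \mathcal{W}_{-\ell}]$ and $[\mathcal{W}_{k+1}, \mathcal{W}_{\ell+1}]$ respectively, and both vanish by (\ref{eq:n3p4vvC}). For the mixed $\mathcal{W}^\pm$ identity the coefficient of $s^k t^\ell$ is $[\mathcal{W}_{-k}, \mathcal{W}_{\ell+1}] + [\mathcal{W}_{k+1}, \mathcal{W}_{-\ell}]$, which vanishes by (\ref{eq:n3p5vvC}). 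For $[\mathcal{G}(s), \mathcal{G}(t)]$, $[\mathcal{\tilde G}(s), \mathcal{\tilde G}(t)]$, and the mixed $\mathcal{G}/\mathcal{\tilde G}$ identity, the coefficient of $s^a t^b$ vanishes trivially when $a = 0$ or $b = 0$ (since $\mathcal{G}_0 = \mathcal{\tilde G}_0 = 1$), and otherwise reduces via $a = k+1, b = \ell+1$ to (\ref{eq:n3p10vvC}) or (\ref{eq:n3p11vC}).

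Second I would treat the four identities carrying the $s, t$ prefactors. The point is that $\mathcal{W}_{-k}$ sits in degree $k$ of $\mathcal{W}^-(s)$ while $\mathcal{G}_{k+1}$ sits in degree $k+1$ of $\mathcal{G}(s)$, so multiplying $[\mathcal{W}^-(s), \mathcal{G}(t)]$ by $s$ and $[\mathcal{G}(s), \mathcal{W}^-(t)]$ by $t$ realigns the two summands to the same $(s^a t^b)$-slot. Concretely, for $a, b \geq 1$ the coefficient of $s^a t^b$ in $s[\mathcal{W}^-(s), \mathcal{G}(t)] + t[\mathcal{G}(s), \mathcal{W}^-(t)]$ is
\[
[\mathcal{W}_{-(a-1)}, \mathcal{G}_b] + [\mathcal{G}_a, \mathcal{W}_{-(b-1)}],
\]
which vanishes by (\ref{eq:n3p6vvC}) after setting $k = a-1, \ell = b-1$; the boundary terms $a = 0$ or $b = 0$ drop out because $\mathcal{G}_0 = 1$ commutes with everything. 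The three remaining identities with $s, t$ prefactors follow identically from (\ref{eq:n3p7vvC}), (\ref{eq:n3p8vvC}), (\ref{eq:n3p9vvC}).

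The only bookkeeping subtlety is keeping straight the dual roles of the index $n$ in $\sum \mathcal{G}_n t^n$ (which starts at the unit $\mathcal{G}_0$) versus the family $\{\mathcal{G}_{k+1}\}_{k \in \mathbb{N}}$ of nontrivial alternating generators appearing in the defining relations; once this shift is absorbed by the explicit $s, t$ prefactors, every claim becomes a direct coefficient comparison.
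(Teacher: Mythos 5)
Your proposal is correct and matches the paper's intent exactly: the paper presents Lemma \ref{lem:gf2} as an immediate reformulation of the relations (\ref{eq:n3p4vvC})--(\ref{eq:n3p11vC}) and offers no written proof, and your coefficient-by-coefficient comparison, including the observation that the prefactors $s,t$ absorb the index shift between $\mathcal W_{-k}$ and $\mathcal G_{k+1}$ and that the boundary terms vanish since $\mathcal G_0=\mathcal {\tilde G}_0=1$, is precisely the verification being left to the reader. No gaps.
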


\noindent Next we express the relations
(\ref{eq:calngg1})--(\ref{eq:calngg6})
in terms of generating functions.
\begin{lemma}
\label{lem:gf3}
We have
\begin{align*}
&\lbrack \mathcal W^-(s), \mathcal G(t)\rbrack_q = 
\lbrack \mathcal W^-(t), \mathcal G(s)\rbrack_q,
\qquad \quad
\lbrack \mathcal G(s), \mathcal W^+(t)\rbrack_q = 
\lbrack \mathcal G(t), \mathcal W^+(s)\rbrack_q,
\\
&
\lbrack \mathcal {\tilde G}(s), \mathcal W^-(t)\rbrack_q = 
\lbrack \mathcal {\tilde G}(t), \mathcal W^-(s)\rbrack_q,
\qquad \quad 
\lbrack \mathcal W^+(s), \mathcal {\tilde G}(t)\rbrack_q = 
\lbrack \mathcal W^+(t), \mathcal {\tilde G}(s)\rbrack_q,
\\
&t^{-1}\lbrack \mathcal G(s), \mathcal {\tilde G}(t)\rbrack -
s^{-1}\lbrack \mathcal G(t), \mathcal {\tilde G}(s)\rbrack =
q\lbrack \mathcal W^-(t), \mathcal W^+(s)\rbrack_q-
q\lbrack \mathcal W^-(s), \mathcal W^+(t)\rbrack_q,
\\
&t^{-1}\lbrack \mathcal  {\tilde G}(s), \mathcal  G(t)\rbrack -
s^{-1}\lbrack \mathcal  {\tilde G}(t), \mathcal G(s)\rbrack =
q \lbrack \mathcal  W^+(t), \mathcal W^-(s)\rbrack_q-
q\lbrack \mathcal W^+(s), \mathcal W^-(t)\rbrack_q,
\\
&\lbrack \mathcal G(s), \mathcal {\tilde G}(t)\rbrack_q -
\lbrack \mathcal G(t), \mathcal {\tilde G}(s)\rbrack_q =
qt\lbrack \mathcal W^-(t), \mathcal W^+(s)\rbrack-
qs\lbrack \mathcal W^-(s), \mathcal W^+(t)\rbrack,
\\
&\lbrack \mathcal {\tilde G}(s),  \mathcal G(t)\rbrack_q -
\lbrack \mathcal {\tilde G}(t),  \mathcal G(s)\rbrack_q =
qt \lbrack \mathcal W^+(t), \mathcal W^-(s)\rbrack-
qs\lbrack \mathcal W^+(s), \mathcal W^-(t)\rbrack.
\end{align*}
\end{lemma}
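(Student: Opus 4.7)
The plan is to derive each identity in Lemma \ref{lem:gf3} from the corresponding index-form relation in Lemma \ref{calUrel3} by multiplying by appropriate monomials in $s, t$ and summing over $k, \ell \in \mathbb N$, then reassembling the sums via Definition \ref{def:gf4}. The six index-form relations split into three groups requiring increasingly careful choices of monomial.

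For the four identities coming from (\ref{eq:calngg1}) and (\ref{eq:calngg2}), I would multiply by $s^k t^\ell$ and sum; the sums reassemble directly as $q$-commutators of generating functions, and the swap $k \leftrightarrow \ell$ on the right of the index-form relations produces the required $s \leftrightarrow t$ symmetry on the right of each generating-function identity. No shift bookkeeping is needed here since $\mathcal W^+(t) = \sum_{\ell \ge 0}\mathcal W_{\ell+1}t^\ell$ and $\mathcal G(s) = \sum_{k \ge 0}\mathcal G_k s^k$ directly absorb the indices.

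For the identities coming from (\ref{eq:calngg3}) and (\ref{eq:calngg4}), I again multiply by $s^k t^\ell$ and sum. The shift in $\mathcal{\tilde G}_{\ell+1}$ produces $\sum_{\ell \ge 0} \mathcal{\tilde G}_{\ell+1} t^\ell = t^{-1}(\mathcal{\tilde G}(t) - 1)$; because $[\mathcal G(s), 1] = 0$, the subtracted constant drops out, leaving $t^{-1}[\mathcal G(s), \mathcal{\tilde G}(t)] - s^{-1}[\mathcal G(t), \mathcal{\tilde G}(s)]$ on the left. On the right, the unshifted $\mathcal W_{-\ell}$ and shifted $\mathcal W_{k+1}$ assemble as $q[\mathcal W^-(t), \mathcal W^+(s)]_q - q[\mathcal W^-(s), \mathcal W^+(t)]_q$.

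For the identities coming from (\ref{eq:calngg5}) and (\ref{eq:calngg6}), I would multiply instead by $s^{k+1} t^{\ell+1}$, which converts the shifted series into $\mathcal G(s) - 1$, $\mathcal{\tilde G}(t) - 1$, $t\mathcal W^-(t)$, $\mathcal W^+(s) - \mathcal W_1$, etc., without introducing negative powers. Expanding $[\mathcal G(s) - 1, \mathcal{\tilde G}(t) - 1]_q$ produces $[\mathcal G(s), \mathcal{\tilde G}(t)]_q$ plus remainder terms of the form $-(q - q^{-1})(\mathcal G(s) + \mathcal{\tilde G}(t) - 1)$ coming from $[X, 1]_q = (q - q^{-1})X$ and $[1, X]_q = (q - q^{-1})X$. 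On the right, the commutator $[t \mathcal W^-(t), \mathcal W^+(s) - \mathcal W_1]$ produces $qt[\mathcal W^-(t), \mathcal W^+(s)]$ together with $-qt[\mathcal W^-(t), \mathcal W_1]$; invoking Lemma \ref{lem:gf1} to rewrite $[\mathcal W^-(t), \mathcal W_1] = (1 - q^{-2}) t^{-1}(\mathcal{\tilde G}(t) - \mathcal G(t))$ yields a $-(q - q^{-1})(\mathcal{\tilde G}(t) - \mathcal G(t))$ contribution, which together with its $s$-analogue cancels the $(q - q^{-1})$ remainders from the left, leaving the claimed identity. The main obstacle is precisely this cancellation: checking that the $(q - q^{-1})$ terms from expanding $[X - 1, Y - 1]_q$ on the left match exactly those supplied by Lemma \ref{lem:gf1} on the right. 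The computation is routine but must be executed with care.
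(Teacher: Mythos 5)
Your proposal is correct and matches the paper's (implicit) approach: the paper presents Lemma \ref{lem:gf3} as a direct reformulation of Lemma \ref{calUrel3} in generating-function form, offering no further proof. Your bookkeeping is exactly what is needed — in particular the choice of the monomial $s^{k+1}t^{\ell+1}$ for (\ref{eq:calngg5}), (\ref{eq:calngg6}) and the check that the remainder $-(q-q^{-1})\bigl(\mathcal G(s)+\mathcal{\tilde G}(t)-\mathcal G(t)-\mathcal{\tilde G}(s)\bigr)$ from expanding $\lbrack \mathcal G(s)-1,\mathcal{\tilde G}(t)-1\rbrack_q$ cancels against the terms supplied by Lemma \ref{lem:gf1} on the right-hand side.
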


\noindent So far in this section
we displayed many relations involving the generating functions
from Definition
\ref{def:gf4}.
In the next two lemmas we express these relations
in a more convenient form.

\begin{lemma}
\label{lem:redrel1}
We have
\begin{align*}
\mathcal W^-(t) \mathcal W_0 
&= 
\mathcal W_0 \mathcal W^-(t),
\\
\mathcal W^+(t) \mathcal W_0 
&= 
\mathcal W_0 \mathcal W^+(t)+(1-q^{-2})t^{-1} 
\bigl(\mathcal G(t)-\mathcal {\tilde G}(t)\bigr),
\\
\mathcal G(t) \mathcal W_0 
&= 
q^2 \mathcal W_0 \mathcal G(t)+(1-q^{2})
\mathcal W^-(t),
\\
\mathcal {\tilde G}(t) \mathcal W_0 
&= 
q^{-2} \mathcal W_0 \mathcal {\tilde G}(t)
+(1-q^{-2})
\mathcal W^-(t)
\end{align*}
\noindent and
\begin{align*}
\mathcal W_1 \mathcal W^+(t)  
&= 
\mathcal W^+(t) \mathcal W_1,
\\
\mathcal W_1 \mathcal W^-(t) 
&= 
\mathcal W^-(t) \mathcal W_1+(1-q^{-2})t^{-1} 
\bigl(\mathcal G(t)-\mathcal {\tilde G}(t)\bigr),
\\
\mathcal W_1 \mathcal G(t) 
&= 
q^2 \mathcal G(t)  \mathcal W_1 +(1-q^{2})
\mathcal W^+(t),
\\
\mathcal W_1 \mathcal {\tilde G}(t) 
&= 
q^{-2} \mathcal {\tilde G}(t)\mathcal W_1 
+(1-q^{-2})
\mathcal W^+(t).
\end{align*}
\end{lemma}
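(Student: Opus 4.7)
The plan is to derive each of the eight equations in Lemma \ref{lem:redrel1} as a direct algebraic rewriting of the already-established generating-function identities in Lemma \ref{lem:gf1} and equation (\ref{eq:com1}). No new input is needed beyond these, together with the definitions $[X,Y] = XY-YX$ and $[X,Y]_q = qXY - q^{-1}YX$.

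For the first block (involving $\mathcal W_0$), I would proceed relation by relation. The first equation is immediate from (\ref{eq:com1}), which says $[\mathcal W_0, \mathcal W^-(t)]=0$. For the second equation, expand $[\mathcal W_0, \mathcal W^+(t)] = (1-q^{-2})t^{-1}(\mathcal{\tilde G}(t) - \mathcal G(t))$ from Lemma \ref{lem:gf1}, solve for $\mathcal W^+(t)\mathcal W_0$, and absorb the sign into $\mathcal G(t) - \mathcal{\tilde G}(t)$. For the third, use $[\mathcal W_0,\mathcal G(t)]_q = q\mathcal W_0 \mathcal G(t) - q^{-1}\mathcal G(t)\mathcal W_0 = (q-q^{-1})\mathcal W^-(t)$; multiply through by $q$ and solve for $\mathcal G(t)\mathcal W_0$, giving the coefficient $1-q^2$ on $\mathcal W^-(t)$. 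For the fourth, start from $[\mathcal{\tilde G}(t),\mathcal W_0]_q = q\mathcal{\tilde G}(t)\mathcal W_0 - q^{-1}\mathcal W_0\mathcal{\tilde G}(t) = (q-q^{-1})\mathcal W^-(t)$; divide by $q$ and solve for $\mathcal{\tilde G}(t)\mathcal W_0$, producing the coefficient $1-q^{-2}$ on $\mathcal W^-(t)$.

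For the second block (involving $\mathcal W_1$), the argument is structurally identical. The first equation comes from $[\mathcal W_1, \mathcal W^+(t)]=0$ in (\ref{eq:com1}). The remaining three are obtained from the dual identities $[\mathcal W^-(t),\mathcal W_1] = (1-q^{-2})t^{-1}(\mathcal{\tilde G}(t) - \mathcal G(t))$, $[\mathcal G(t),\mathcal W_1]_q = (q-q^{-1})\mathcal W^+(t)$, and $[\mathcal W_1,\mathcal{\tilde G}(t)]_q = (q-q^{-1})\mathcal W^+(t)$ from Lemma \ref{lem:gf1} by the same expand-and-solve procedure, with $\mathcal W^+(t)$ now playing the role previously played by $\mathcal W^-(t)$.

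Since the entire argument is just unwinding definitions of $q$-commutators and rearranging, there is no real obstacle; the only point requiring care is bookkeeping of the scalar coefficients $1-q^{\pm 2}$ and the placement of the factor $t^{-1}$. The symmetry between the $\mathcal W_0$ and $\mathcal W_1$ blocks is explained at the structural level by the automorphism $\sigma$ of Lemma \ref{lem:sym}, which swaps $\mathcal W_0 \leftrightarrow \mathcal W_1$, $\mathcal W^-(t)\leftrightarrow \mathcal W^+(t)$, and $\mathcal G(t)\leftrightarrow \mathcal{\tilde G}(t)$; one could alternatively derive the second block from the first by applying $\sigma$, but writing it out directly is equally short.
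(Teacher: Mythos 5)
Your proposal is correct and follows exactly the paper's approach: the paper's proof consists of the single sentence that the relations are reformulations of (\ref{eq:com1}) and Lemma \ref{lem:gf1}, and your expand-and-solve derivations (with the scalar coefficients $1-q^{\pm 2}$ correctly tracked) are precisely what that reformulation amounts to.
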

\begin{proof} These are reformulations of
(\ref{eq:com1}) and Lemma
\ref{lem:gf1}.
\end{proof}

\begin{lemma}
\label{lem:redrel2}
We have
\begin{align*}
\mathcal G(s) \mathcal W^-(t) &= 
q \frac{(qs-q^{-1}t)\mathcal W^-(t) \mathcal G(s) - (q-q^{-1})s 
\mathcal W^-(s) \mathcal G(t)}{s-t},
\\
\mathcal {\tilde G}(s) \mathcal W^-(t) &= 
q^{-1} \frac{(q^{-1}s-qt)\mathcal W^-(t) \mathcal {\tilde G}(s) + (q-q^{-1})s 
\mathcal W^-(s) \mathcal {\tilde G}(t)}{s-t},
\\
\mathcal W^+(s) \mathcal G(t) &= 
q \frac{(q^{-1}s-qt)\mathcal G(t) \mathcal W^+(s) + (q-q^{-1})t 
\mathcal G(s) \mathcal W^+(t)}{s-t},
\\
\mathcal W^+(s) \mathcal {\tilde G}(t) &= 
q^{-1} \frac{(qs-q^{-1}t)\mathcal {\tilde G}(t) \mathcal W^+(s) - (q-q^{-1})t 
\mathcal {\tilde G}(s) \mathcal W^+(t)}{s-t}
\end{align*}
\noindent 
and
\begin{align*}
\mathcal W^+(s) \mathcal W^-(t) &= 
\mathcal W^-(t) \mathcal W^+(s) +
(1-q^{-2})\frac{\mathcal G(s) \mathcal {\tilde G}(t)-\mathcal G(t)
\mathcal {\tilde G}(s)}{s-t},
\\
\mathcal {\tilde G}(s) \mathcal G(t) &= 
\mathcal G(t) \mathcal {\tilde G}(s) +
(1-q^{2})st \frac{\mathcal W^-(t) \mathcal W^{+}(s)-\mathcal W^-(s)
\mathcal W^+(t)}{s-t}.
\end{align*}
\end{lemma}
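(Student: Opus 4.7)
The plan is to show that each of the six identities in the lemma is a formal consequence of solving a small linear system built from the generating-function identities in Lemmas \ref{lem:gf2} and \ref{lem:gf3}. The underlying principle is that each identity is a reordering rule expressing a ``wrong-ordered'' product as a rational combination of ``right-ordered'' products, and two independent reordering equations in Lemmas \ref{lem:gf2}, \ref{lem:gf3} suffice to isolate the desired unknown.

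For the first four identities (those expressing $\mathcal G(s)\mathcal W^-(t)$, $\mathcal{\tilde G}(s)\mathcal W^-(t)$, $\mathcal W^+(s)\mathcal G(t)$, and $\mathcal W^+(s)\mathcal{\tilde G}(t)$), my plan is to couple one line from Lemma \ref{lem:gf2} with the matching line from Lemma \ref{lem:gf3}. For example, for the first identity, the relation
$s[\mathcal W^-(s),\mathcal G(t)] + t[\mathcal G(s), \mathcal W^-(t)] = 0$ from Lemma \ref{lem:gf2} and the relation $[\mathcal W^-(s), \mathcal G(t)]_q = [\mathcal W^-(t), \mathcal G(s)]_q$ from Lemma \ref{lem:gf3} yield two independent linear equations in the two unknowns $\mathcal G(s)\mathcal W^-(t)$ and $\mathcal G(t)\mathcal W^-(s)$, all other monomials being ``right-ordered''. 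Solving the $2\times 2$ system (for instance, multiply the $q$-commutator equation by $s$ and subtract the \ref{lem:gf2}-equation) produces
\[
(s-t)\,\mathcal G(s)\mathcal W^-(t) = q(qs-q^{-1}t)\,\mathcal W^-(t)\mathcal G(s) - q(q-q^{-1})s\,\mathcal W^-(s)\mathcal G(t),
\]
which is the asserted identity after formal division by $s-t$. The three other ``mixed'' identities are derived from the three analogous pairs of relations by the same procedure; the factors $q$ versus $q^{-1}$ and the signs $\pm(q-q^{-1})$ differ according to whether $\mathcal G$ or $\mathcal{\tilde G}$ is involved and whether the $\mathcal W^\pm$-factor is on the left or right.

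For the last two identities my plan is slightly different because they relate two distinct families of commutators. I would exploit the two symmetries extracted from Lemma \ref{lem:gf2}, namely that $[\mathcal W^+(s),\mathcal W^-(t)]$ and $[\mathcal G(s),\mathcal{\tilde G}(t)]$ are each symmetric under $s\leftrightarrow t$. Substituting these symmetries into the 3rd and 5th lines of Lemma \ref{lem:gf3} causes the two left-hand sides to collapse to scalar multiples of $[\mathcal G(s),\mathcal{\tilde G}(t)]$ (with factors $\tfrac{s-t}{st}$ and $q-q^{-1}$ respectively), and the two right-hand sides to collapse to scalar multiples of $[\mathcal W^+(s),\mathcal W^-(t)]$ (with factors $q(s-t)$ and $(q^2-1)$ respectively). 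Each of the resulting equations then expresses one of these commutators as a multiple of the other, giving identities five and six directly.

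I expect the main obstacle to be bookkeeping rather than anything conceptual: keeping signs consistent when passing between $[\cdot,\cdot]$ and $[\cdot,\cdot]_q$, between $\mathcal G$ and $\mathcal{\tilde G}$, and between the $s$-slot and $t$-slot. The formal divisions by $s-t$ are justified because in each case the right-hand-side numerator vanishes on the diagonal $s=t$, so the quotient lies in the formal power series ring $\mathcal U^+_q[[s,t]]$, and the resulting identity may be read off coefficient by coefficient.
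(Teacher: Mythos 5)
Your proposal is correct, and for the first four identities it is exactly the paper's argument: pair the appropriate relation from Lemma \ref{lem:gf2} with its partner in Lemma \ref{lem:gf3} and solve the resulting $2\times 2$ system in the two wrong-ordered products. For the last two identities your route differs slightly from the paper's. The paper treats the last four relations of Lemma \ref{lem:gf3} as a $4\times 4$ linear system in the four unknowns $\mathcal W^+(s)\mathcal W^-(t)$, $\mathcal W^+(t)\mathcal W^-(s)$, $\mathcal {\tilde G}(s)\mathcal G(t)$, $\mathcal {\tilde G}(t)\mathcal G(s)$ and solves it outright, whereas you use only relations (\ref{eq:calngg3}) and (\ref{eq:calngg5}) in generating-function form and feed in the $s\leftrightarrow t$ symmetries of $\lbrack \mathcal W^+(s),\mathcal W^-(t)\rbrack$ and $\lbrack \mathcal G(s),\mathcal {\tilde G}(t)\rbrack$ supplied by Lemma \ref{lem:gf2}; each of the two resulting equations then yields one target identity directly. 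Your version is more economical (two equations instead of four, and no need to argue the $4\times 4$ system is nondegenerate), at the cost of importing the Lemma \ref{lem:gf2} symmetries. One small caution on wording: after substituting the symmetries, the right side of the third relation of Lemma \ref{lem:gf3} collapses to $(q^2-1)\bigl(\mathcal W^-(t)\mathcal W^+(s)-\mathcal W^-(s)\mathcal W^+(t)\bigr)$ and the left side of the fifth to $(q-q^{-1})\bigl(\mathcal G(s)\mathcal {\tilde G}(t)-\mathcal G(t)\mathcal {\tilde G}(s)\bigr)$; these are differences of correctly ordered products, not literally scalar multiples of the commutators $\lbrack \mathcal W^+(s),\mathcal W^-(t)\rbrack$ and $\lbrack \mathcal G(s),\mathcal {\tilde G}(t)\rbrack$ as you phrase it. This does not affect the argument, since those differences are precisely the expressions appearing on the right sides of the fifth and sixth identities, but the distinction should be kept straight when writing out the bookkeeping.
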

\begin{proof} To obtain the first equation of the lemma statement,
consider the relations
\begin{align*}
&
s \lbrack \mathcal W^-(s), \mathcal G(t)\rbrack +
t \lbrack \mathcal G(s), \mathcal W^-(t) \rbrack = 0, 
\\
&
 \lbrack \mathcal W^-(s), \mathcal G(t)\rbrack_q = 
 \lbrack \mathcal W^-(t), \mathcal G(s) \rbrack_q
\end{align*}
from Lemmas
\ref{lem:gf2},
\ref{lem:gf3}.
These relations give a system of linear equations in two
unknowns
$\mathcal G(s) \mathcal W^-(t)$,
$\mathcal G(t) \mathcal W^-(s)$. Solve this system using
linear algebra to obtain the first equation in the lemma statement.
The next three equations in the lemma statement are similarly obtained.
To obtain the last two equations in the lemma statement, consider
the last four relations in
Lemma \ref{lem:gf3}. These relations give a system of linear
equations in four unknowns
\begin{align*}
&\mathcal W^+(s)\mathcal W^-(t),
\qquad \quad
\mathcal W^+(t)\mathcal W^-(s),
\qquad \quad
\mathcal {\tilde G}(s)\mathcal G(t),
\qquad \quad
\mathcal {\tilde G}(t)\mathcal  G(s).
\end{align*}
Solve this system using linear algebra to obtain 
the last two equations in the lemma statement.
\end{proof}

\noindent The relations in Lemmas
\ref{lem:redrel1},
\ref{lem:redrel2} will be called {\it reduction rules}.

\begin{definition}
\label{def:zgen}
\rm
Define the generating function
\begin{align*}
Z^\vee(t) = \sum_{n\in \mathbb N} Z^\vee_n t^n.
\end{align*}
\end{definition}

\begin{lemma} 
\label{lem:ZvvGF}
We have
\begin{align*}
Z^\vee(t) = \mathcal G(q^{-1}t) \mathcal{\tilde G}(qt) -qt
\mathcal W^{-} (q^{-1}t)\mathcal W^+(qt).
\end{align*}
\end{lemma}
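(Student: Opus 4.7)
The plan is to prove this by direct expansion of both sides as formal power series in $t$, matching coefficients using Definition \ref{def:Zvee}. There is nothing deep here; the work is purely combinatorial bookkeeping with the Cauchy product of power series.

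First I would substitute $q^{-1}t$ and $qt$ into Definition \ref{def:gf4}, obtaining
\[
\mathcal G(q^{-1}t) = \sum_{k\in\mathbb N} \mathcal G_k\, q^{-k} t^k, \qquad
\mathcal {\tilde G}(qt) = \sum_{j\in\mathbb N} \mathcal {\tilde G}_j\, q^{j} t^j,
\]
and similarly for $\mathcal W^-(q^{-1}t)$ and $\mathcal W^+(qt)$. Multiplying the first pair and collecting the coefficient of $t^n$ yields
\[
\bigl[\,t^n\,\bigr]\ \mathcal G(q^{-1}t)\mathcal{\tilde G}(qt)
= \sum_{k=0}^{n} \mathcal G_k \mathcal{\tilde G}_{n-k}\, q^{(n-k)-k}
= \sum_{k=0}^{n} \mathcal G_k \mathcal{\tilde G}_{n-k}\, q^{n-2k},
\]
which is exactly the first sum in (\ref{eq:ZnVee}).

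Next I would handle the second term. The Cauchy product gives
\[
\mathcal W^-(q^{-1}t)\mathcal W^+(qt)
= \sum_{N\in\mathbb N}\left(\sum_{k=0}^{N} \mathcal W_{-k} \mathcal W_{N+1-k}\, q^{N-2k}\right) t^N,
\]
so multiplying by $qt$ and reindexing $n=N+1$ produces
\[
qt\,\mathcal W^-(q^{-1}t)\mathcal W^+(qt)
= \sum_{n\geq 1}\left(\sum_{k=0}^{n-1} \mathcal W_{-k} \mathcal W_{n-k}\, q^{n-2k}\right) t^n
= \sum_{n\geq 1}\left(q\sum_{k=0}^{n-1} \mathcal W_{-k} \mathcal W_{n-k}\, q^{n-1-2k}\right) t^n,
\]
which matches the second sum in (\ref{eq:ZnVee}) term-by-term for $n\geq 1$.

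Subtracting the two expansions and comparing with Definition \ref{def:Zvee} (and noting $Z^\vee_0 = 1 = \mathcal G_0 \mathcal{\tilde G}_0$, consistent with the conventions $\mathcal G_0 = \mathcal{\tilde G}_0 = 1$ and the absence of a $t^0$ contribution from the $W^-W^+$ term), one reads off $Z^\vee(t) = \sum_{n\in\mathbb N} Z^\vee_n t^n$, as required. The only possible pitfall is a sign or shift error in the reindexing $N \mapsto n-1$ after multiplying by $qt$, which is why I would carry it out explicitly as above.
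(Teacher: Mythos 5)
Your proposal is correct and is exactly the argument the paper intends: the paper's proof is simply ``By Definitions \ref{def:Zvee}, \ref{def:gf4}, \ref{def:zgen},'' i.e.\ the direct coefficient comparison you carry out. Your bookkeeping (the coefficient $q^{n-2k}$ from the $q^{\mp 1}t$ substitutions, and the shift $n=N+1$ producing the sum $q\sum_{k=0}^{n-1}\mathcal W_{-k}\mathcal W_{n-k}q^{n-1-2k}$) checks out, including the $n=0$ term.
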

\begin{proof} 
By Definitions
\ref{def:Zvee},
\ref{def:gf4},
\ref{def:zgen}.
\end{proof}

\begin{lemma} 
\label{lem:hint1}
We have
\begin{align*}
&\lbrack \mathcal W_0, Z^\vee(t)\rbrack=0,
\qquad \qquad 
\lbrack \mathcal W_1, Z^\vee(t)\rbrack=0.
\end{align*}
\end{lemma}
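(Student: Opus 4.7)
The plan is to verify Lemma \ref{lem:hint1} by direct computation in the generating-function language already set up in this appendix. Using the compact form
\[
Z^\vee(t) = \mathcal{G}(q^{-1}t)\mathcal{\tilde G}(qt) - qt\,\mathcal{W}^-(q^{-1}t)\mathcal{W}^+(qt)
\]
from Lemma \ref{lem:ZvvGF}, the two commutators reduce to moving $\mathcal W_0$ (resp.\ $\mathcal W_1$) past each of the four generating-function factors using the reduction rules in Lemma \ref{lem:redrel1}. A miraculous cancellation then occurs thanks to the ``resonant'' specialization $s = q^{-1}t,\ t \mapsto qt$ of the rules in Lemma \ref{lem:redrel2}.

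For $[\mathcal W_0, Z^\vee(t)]=0$, I would first compute
$\mathcal G(q^{-1}t)\mathcal{\tilde G}(qt)\mathcal W_0$ by moving $\mathcal W_0$ leftward: the rule for $\mathcal{\tilde G}(t)\mathcal W_0$ produces a factor $q^{-2}$ and a $\mathcal W^-(qt)$ correction, while the rule for $\mathcal G(t)\mathcal W_0$ produces $q^{2}$ and a $\mathcal W^-(q^{-1}t)$ correction. The $q^{\pm 2}$'s cancel to yield the clean main term $\mathcal W_0\mathcal G(q^{-1}t)\mathcal{\tilde G}(qt)$, plus two correction terms. Next compute $qt\,\mathcal W^-(q^{-1}t)\mathcal W^+(qt)\mathcal W_0$: the first factor commutes with $\mathcal W_0$ by Lemma \ref{lem:redrel1}, and $\mathcal W^+(qt)\mathcal W_0$ contributes a correction proportional to $\mathcal G(qt)-\mathcal{\tilde G}(qt)$. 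Subtracting the two computations, the $\mathcal W_0 Z^\vee(t)$ piece appears, and a short bookkeeping check shows that the $\mathcal{\tilde G}$-type corrections cancel outright, leaving the residual
\[
(1-q^{-2})\bigl(\mathcal G(q^{-1}t)\mathcal W^-(qt) - \mathcal W^-(q^{-1}t)\mathcal G(qt)\bigr).
\]
Finally, specializing the first rule of Lemma \ref{lem:redrel2} at $s=q^{-1}t$, $t \mapsto qt$ gives $qs - q^{-1}(qt)=0$, so one summand drops out and the remaining factor simplifies to $\mathcal G(q^{-1}t)\mathcal W^-(qt) = \mathcal W^-(q^{-1}t)\mathcal G(qt)$, killing the residual.

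For $[\mathcal W_1, Z^\vee(t)]=0$ I would run the completely parallel calculation, using the second block of reduction rules in Lemma \ref{lem:redrel1} (the ones that move $\mathcal W_1$ to the right) together with the fourth rule in Lemma \ref{lem:redrel2} for $\mathcal W^+(s)\mathcal{\tilde G}(t)$. Again the $\mathcal G$-type correction terms cancel in pairs, leaving a residual proportional to $\mathcal{\tilde G}(q^{-1}t)\mathcal W^+(qt) - \mathcal W^+(q^{-1}t)\mathcal{\tilde G}(qt)$, and the resonance $qs - q^{-1}(qt)=0$ in the reduction rule for $\mathcal W^+(s)\mathcal{\tilde G}(t)$ kills this residual. (One is tempted to deduce the second commutation from the first via the automorphism $\sigma$ of Lemma \ref{lem:sym}, but at this point in the exposition the $\sigma$-invariance of $Z^\vee(t)$ has not yet been established, so a direct computation is cleaner.)

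The main obstacle I anticipate is purely organizational: several scalar corrections of the form $(1-q^{\pm 2})$ and $(q^{\pm 2}-1)$ and arguments $q^{\pm 1}t$ must be tracked carefully so that the $\mathcal{\tilde G}$-corrections (in the first commutator) and $\mathcal G$-corrections (in the second) visibly cancel before one invokes Lemma \ref{lem:redrel2}. The conceptual content beyond that bookkeeping is the single observation that the arguments $q^{-1}t$ and $qt$ are precisely the ``resonant'' values at which one summand in each of the $\mathcal G$--$\mathcal W^-$ and $\mathcal W^+$--$\mathcal{\tilde G}$ reduction rules vanishes.
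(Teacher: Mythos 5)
Your proposal is correct and follows exactly the route of the paper's own (one-line) proof: substitute the expression for $Z^\vee(t)$ from Lemma \ref{lem:ZvvGF} and simplify with the reduction rules of Lemmas \ref{lem:redrel1} and \ref{lem:redrel2}. The detailed cancellations you describe, including the residual terms and the resonant specialization $s=q^{-1}t$, $t\mapsto qt$ that kills them, all check out.
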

\begin{proof} To verify each equation,
eliminate $Z^\vee(t)$ using
Lemma \ref{lem:ZvvGF}, and evaluate the result
using the reduction rules.
\end{proof}

\begin{lemma}
\label{lem:hint2}
We have
\begin{align*}
&\lbrack \mathcal G(s), Z^\vee(t)\rbrack=0,
\qquad \qquad
\lbrack \mathcal {\tilde G}(s), Z^\vee(t)\rbrack=0,
\\
&\lbrack \mathcal W^-(s), Z^\vee(t)\rbrack=0,
\qquad \qquad
\lbrack \mathcal W^+(s), Z^\vee(t)\rbrack=0.
\end{align*}
\end{lemma}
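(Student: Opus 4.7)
The plan is to mimic exactly the strategy used for Lemma \ref{lem:hint1}: eliminate $Z^\vee(t)$ using the factorization
\[
Z^\vee(t) = \mathcal G(q^{-1}t)\mathcal{\tilde G}(qt) - qt\,\mathcal W^{-}(q^{-1}t)\mathcal W^{+}(qt)
\]
from Lemma \ref{lem:ZvvGF}, and then drag the external generating function through each of the two factors using the reduction rules of Lemmas \ref{lem:redrel1} and \ref{lem:redrel2}. The four commutators to be verified split into the two ``$G$-type'' cases and the two ``$W^{\pm}$-type'' cases; by applying the automorphism $\sigma$ and the antiautomorphism $S$ of Lemma \ref{lem:sym}, which fix $Z^\vee_n$ (and hence $Z^\vee(t)$) in the spirit of Proposition \ref{prop:FIX}, it suffices to check one case from each pair, say $\lbrack \mathcal G(s),Z^\vee(t)\rbrack = 0$ and $\lbrack \mathcal W^-(s), Z^\vee(t)\rbrack = 0$.

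For the first case, I would move $\mathcal G(s)$ rightward past $\mathcal G(q^{-1}t)\mathcal{\tilde G}(qt)$ and past $\mathcal W^-(q^{-1}t)\mathcal W^+(qt)$ separately. The factor $\mathcal G(q^{-1}t)$ commutes with $\mathcal G(s)$ by Lemma \ref{lem:gf2}, so only one reduction is needed on the first summand, producing $\mathcal G(q^{-1}t)\cdot\bigl[$reduction of $\mathcal G(s)\mathcal{\tilde G}(qt)$$\bigr]$; on the second summand one applies the reduction rule $\mathcal G(s)\mathcal W^-(t)$ from Lemma \ref{lem:redrel2} (specialized at $q^{-1}t$) and then the rule for $\mathcal G(s)\mathcal W^+(qt)$, thereby producing expressions involving $\mathcal G$, $\mathcal{\tilde G}$, $\mathcal W^-$ and $\mathcal W^+$ with rational coefficients in $s$, $t$. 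The target is to verify that after collecting, everything matches the expression obtained by moving $\mathcal G(s)$ leftward, i.e.\ that the coefficient combinations conspire so the commutator vanishes.

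For the case $\lbrack \mathcal W^-(s), Z^\vee(t)\rbrack = 0$, I would use $\lbrack \mathcal W^-(s), \mathcal W^-(q^{-1}t)\rbrack = 0$ to simplify the $W^-W^+$ term, and use the first and second reduction rules of Lemma \ref{lem:redrel2} (with $s\to q^{-1}t$ or $qt$ in the appropriate slot) to handle $\mathcal W^-(s)\mathcal G(q^{-1}t)$ and $\mathcal W^-(s)\mathcal{\tilde G}(qt)$; on the $W^-W^+$ side one needs the fifth reduction rule to swap $\mathcal W^-(s)$ past $\mathcal W^+(qt)$, which introduces precisely the $\mathcal G\mathcal{\tilde G}-\mathcal G\mathcal{\tilde G}$ type terms that must cancel against the first summand. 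The key simplifying feature is that the specializations $s\mapsto q^{-1}t$ and $s\mapsto qt$ inside the reduction rules make the denominators $s-t$, $s\mp q^{\pm 2}t$ produce pole/residue patterns that align perfectly.

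The main obstacle is bookkeeping: each reduction introduces a rational function of $s,t$ and each final expression is a sum of six or more products of two generating functions, so the calculation is long but mechanical. The critical check is that, after substituting the shifts $q^{\pm 1}t$, the scalar coefficients on matching monomials in $\mathcal G$, $\mathcal{\tilde G}$, $\mathcal W^\pm$ add to zero; this is where the specific choice of powers $q^{\pm 1}$ in Lemma \ref{lem:ZvvGF} pays off, and it is precisely what makes $Z^\vee(t)$, rather than some other quadratic combination, central. Once $\lbrack \mathcal G(s),Z^\vee(t)\rbrack=0$ and $\lbrack \mathcal W^-(s),Z^\vee(t)\rbrack=0$ are verified, applying $\sigma$ and extracting coefficients of $s^n$ delivers the remaining two identities, completing the proof.
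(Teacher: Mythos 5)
Your verification of $\lbrack \mathcal G(s), Z^\vee(t)\rbrack=0$ by eliminating $Z^\vee(t)$ via Lemma \ref{lem:ZvvGF} and pushing $\mathcal G(s)$ through with the reduction rules is exactly the paper's computation for that case. The problem is how you handle the other three identities.

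Your reduction ``by applying $\sigma$ and $S$, which fix $Z^\vee_n$ in the spirit of Proposition \ref{prop:FIX}, it suffices to check one case from each pair'' is circular at this point in the development. Proposition \ref{prop:FIX} is proved by chasing $Z_n$ around the diagram of Lemma \ref{lem:sigmadiag} and then using injectivity of $\varphi$; injectivity comes from Theorem \ref{thm:main1}, which needs Lemma \ref{lem:nphi}, which needs the $Z_n$ to be central --- i.e.\ Lemma \ref{lem:Zcent}, the very statement this appendix is proving. Nor can you check $\sigma(Z^\vee_n)=Z^\vee_n$ directly from Definition \ref{def:Zvee}: applying $\sigma$ to (\ref{eq:ZnVee}) produces the expression (\ref{eq:SnGGWW2}), and its equality with (\ref{eq:SnGGWW1}) is exactly the content of Corollary \ref{cor:4same}, which again sits downstream of Proposition \ref{prop:FIX}. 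So the symmetry argument cannot be used to dispatch the $\mathcal {\tilde G}$ and $\mathcal W^+$ cases here. (Applying $\sigma$ to $\lbrack \mathcal G(s),Z^\vee(t)\rbrack=0$ only yields $\lbrack \mathcal {\tilde G}(s),\sigma(Z^\vee(t))\rbrack=0$, which is a statement about a different element.)

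The paper avoids all of this: having verified the single identity $\lbrack \mathcal G(s), Z^\vee(t)\rbrack=0$ by brute force, it obtains the other three from Lemma \ref{lem:gf1} together with Lemma \ref{lem:hint1}. Explicitly, $(q-q^{-1})\mathcal W^-(s)=\lbrack \mathcal W_0,\mathcal G(s)\rbrack_q$ and $(q-q^{-1})\mathcal W^+(s)=\lbrack \mathcal G(s),\mathcal W_1\rbrack_q$, so these commute with $Z^\vee(t)$ because $\mathcal W_0$, $\mathcal W_1$ and $\mathcal G(s)$ do; then $(1-q^{-2})s^{-1}\bigl(\mathcal {\tilde G}(s)-\mathcal G(s)\bigr)=\lbrack \mathcal W_0,\mathcal W^+(s)\rbrack$ delivers the $\mathcal {\tilde G}(s)$ case. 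Your fallback of a second direct reduction-rule computation for $\lbrack \mathcal W^-(s),Z^\vee(t)\rbrack$ would in principle work, but it is unnecessary labour and still leaves the $\mathcal {\tilde G}$ and $\mathcal W^+$ cases unproved once the symmetry shortcut is removed. Replace the symmetry step with the Lemma \ref{lem:gf1}/\ref{lem:hint1} bootstrap and the proof is complete.
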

\begin{proof} To verify
$\lbrack \mathcal G(s), Z^\vee(t)\rbrack=0$,
eliminate $Z^\vee(t)$ using Lemma
\ref{lem:ZvvGF}, and evaluate the result
using the reduction rules.
To obtain the remaining three equations in
the lemma statement, 
use Lemmas
\ref{lem:gf1}, 
\ref{lem:hint1}.
\end{proof}

\noindent We can now easily prove Lemma
\ref{lem:Zcent}. 
Let $n\in \mathbb N$ be given.
By Lemma
\ref{lem:hint2},
$Z^\vee_n$ commutes with every alternating generator
of $\mathcal U^+_q$, and hence everything in
$\mathcal U^+_q$.
In other words, $Z^\vee_n$ is central in
$\mathcal U^+_q$.

\bigskip

\noindent Paul Terwilliger \hfil\break
\noindent Department of Mathematics \hfil\break
\noindent University of Wisconsin \hfil\break
\noindent 480 Lincoln Drive \hfil\break
\noindent Madison, WI 53706-1388 USA \hfil\break
\noindent email: {\tt terwilli@math.wisc.edu }\hfil\break

\end{document}